\numberwithin{equation}{section}
\def\Cap{\v{C}ap}
\def\Soucek{Sou\v{c}ek}
\def\Vladimir{Vladim\'{i}r}
\def\cf{cf.~}
\newcommand{\hsp}[1]{{\hbox{\hspace{#1}}}}
\newcommand{\bmath}[1]{{\bf \boldmath {#1} \unboldmath}}
\newcommand{\mystack}[2]{\ensuremath{ \substack{ \hbox{\tiny{${#1}$}} \\ \hbox{\tiny{${#2}$}} }} }
\def\a{\alpha}  
\def\d{\delta}  
\def\e{\varepsilon}
\def\m{\mu}
\def\n{\nu}
\def\s{\sigma}
\def\w{\omega}
\def\cA{\mathcal{A}} 
\def\fa{\mathfrak{a}}
 \def\tAnn{\mathrm{Ann}}
\def\tAut{\mathrm{Aut}}
\def\fb{\mathfrak{b}} 
\def\bC{\mathbb C} \def\cC{\mathcal C}
\def\bc{\mathbf{c}}
 \def\bd{\mathbf{d}}
\def\td{\mathrm{d}}
 \def\tdim{\mathrm{dim}}
 \def\ttE{\mathtt{E}}
\def\tFlag{\mathrm{Flag}}
\def\tGr{\mathrm{Gr}}
\def\fg{{\mathfrak{g}}}
\def\bH{\mathbb{H}}
\def\cH{\mathcal H} 
 \def\sH{\mathscr{H}}
\def\fh{\mathfrak{h}}
\def\ffi{\mathfrak{i}}
  \def\cI{\mathcal I} \def\sI{\mathscr{I}}
 \def\tim{\mathrm{im}}
  \def\fj{\mathfrak{j}}
 \def\tker{\mathrm{ker}}
\def\tmax{\mathrm{max}} \def\tmin{\mathrm{min}}
\def\tmod{\mathrm{mod}}
\def\fn{\mathfrak{n}}
 \def\cO{\mathcal O}
\def\bP{\mathbb P}
\def\fp{\mathfrak{p}}
 \def\cQ{\mathcal Q}
\def\bR{\mathbb R}
\def\trank{\mathrm{rank}}
\def\ttS{\mathtt{S}} 
\def\fs{\mathfrak{s}}
\def\tss{\mathrm{ss}}
\def\tSL{\mathrm{SL}}
 \def\tspan{\mathrm{span}}
\def\cT{\mathcal T}  
\def\ft{\mathfrak{t}}
\def\sU{\mathscr{U}}
\def\fv{\mathfrak{v}}
\def\bx{\mathbf{x}}
\def\by{\mathbf{y}}
   \def\bZ{\mathbb Z}
\def\half{\tfrac{1}{2}}
\def\tand{\quad\hbox{and}\quad}
\def\del{\partial}
\def\dfn{\stackrel{\hbox{\tiny{dfn}}}{=}}
\def\sb{{\hbox{\tiny{$\bullet$}}}}
\def\inj{\hookrightarrow}
\def\op{\oplus}
\def\ot{\otimes}
\def\tw{\hbox{\small $\bigwedge$}}
\def\wtL{{\Lambda_\mathrm{wt}}}
\def\wtD{{\Lambda_\mathrm{wt}^+}}
\newcounter{numcnt}
\newcounter{cnt}
\newcounter{acnt}
\newcounter{Acnt}
\newcounter{icnt}
\newcounter{Icnt}
\newcounter{exam_cnt}
\newcounter{mccnt}
\newtheorem{corollary}[equation]{Corollary}
\newtheorem*{corollary*}{Corollary}
\newtheorem{lemma}[equation]{Lemma}
\newtheorem*{lemma*}{Lemma}
\newtheorem*{proposition*}{Proposition}
\newtheorem{theorem}[equation]{Theorem}
\newtheorem*{theorem*}{Theorem}
\theoremstyle{definition}
\newtheorem*{boldQ*}{Question}
\newtheorem*{boldP*}{Problem}
\theoremstyle{remark}
\newtheorem*{assume*}{Assume}
\newtheorem*{answer*}{Answer}
\newtheorem*{claim*}{Claim}
\newtheorem*{definition*}{Definition}
\newtheorem*{example*}{Example}
\newtheorem*{hint*}{Hint}
\newtheorem*{notation*}{Notation}
\newtheorem{remark}[equation]{Remark}
\newtheorem*{remark*}{Remark}
\newtheorem*{remarks*}{Remarks}
\newtheorem*{fact*}{Fact}
\newtheorem*{emphL*}{Lemma}
\newtheorem*{emphQ*}{Question}
\newtheorem*{emphA*}{Answer}
\numberwithin{HWeq}{section}
\theoremstyle{definition}
\def\bd{\mathbf{d}}
\def\tss{\mathrm{ss}}
\def\Wvhs{W_\mathrm{vhs}}
\def\FHW{MR2188135}
\def\GGK{MR2918237}
\def\Kostant1{MR0142696}
\begin{document}
\title[Characteristic cohomology of the IPR]{Characteristic Cohomology of the Infinitesimal Period Relation}
\author[Robles]{C. Robles}
\email{robles@math.tamu.edu}
\address{Mathematics Department, Mail-stop 3368, Texas A\&M University, College Station, TX  77843-3368} 

\date{\today}

\begin{abstract}
The infinitesimal period relation (also known as Griffiths' transversality) is the system of partial differential equations constraining variations of Hodge structure.  This paper presents a study of the characteristic cohomology associated with that system of PDE.
\end{abstract}
\keywords{Variation of Hodge structure, infinitesimal period relation (Griffiths' transversality), characteristic cohomology, flag domain}
\subjclass[2010]
{
 14D07, 32G20. 
 58A15, 
 58A17. 
}
\maketitle

\setcounter{tocdepth}{1}

\section{Introduction}

Let $\check D = G_\bC/P$ be a (generalized) flag variety; here $G_\bC$ is a complex, semisimple Lie group and $P$ is a parabolic subgroup.\footnote{The notation $\check D$ for $G_\bC/P$ comes from Hodge theory: we think of $\check D$ as the compact dual of a period domain (or, more generally, a Mumford--Tate domain).}  The topic of this paper is the characteristic cohomology associated with a differential system on $\check D$.  The differential system is given by the unique minimal $G_\bC$--homogeneous bracket--generating subbundle $\cT_1 \subset \cT\check D$ of the holomorphic tangent bundle.  The equality $\cT_1 = \cT\check D$ holds if and only if $\check D$ admits the structure of a compact Hermitian symmetric space.  In all other cases, bracket--generation implies the distribution is as far from integrable (or Frobenius) as it is possible to be.  

A connected complex submanifold $M \subset \check D$ is a solution if $\cT_xM \subset \cT_{1,x}$ for all $x\in M$.  Likewise, we will say that an irreducible variety $Y \subset \check D$ is a solution if $\cT_yY \subset \cT_{1,y}$ for all smooth points $y \in Y$.  Here, the case that $Y$ is a Schubert variety will be of particular interest.

Associated to this system is a differential ideal $\cI \subset \cA$ in the ring of differential forms with the property that $M$ is a solution if and only if $\left.\cI\right|_M = 0$.  Given any open subset $U \subset \check D$, the de Rham complex $(\cA_U,\td)$ induces a quotient complex, $(\cA_U/\cI_U,\td)$, and the characteristic cohomology $H^\sb_\cI(U) = H^\sb(\cA_U/\cI_U,\td)$ is the cohomology of this complex.  We may think of the characteristic cohomology as the cohomology that induces ordinary cohomology on integral manifolds $M\subset U$ by virtue of their being solutions of the system of differential equations.  

As will be discussed below, the characteristic cohomology may be realized as the  cohomology of a complex of differential operators.  The cohomology of a differential complex and related systems of differential equations is a subject of considerable interest (addressing such questions as: When is the cohomology finite dimensional? When does it vanish?  When does a local Poincar\'e Lemma hold?); see, for example, \cite{MR634855, MR0150342, BH2014, MR1311820, MR1334205, DanielMa, MR2217688, MR753407, MR662464}.  It should also be noted that the characteristic cohomology considered here is closely related to the \emph{characteristic cohomology of an exterior differential system} ($\mathrm{CC}_\mathrm{eds}$); indeed, we will be working with the ``Provisional Definition'' of R.~Bryant and P.~Griffiths's foundational \cite{MR1311820}.\footnote{The inadequacy of the provisional definition from the perspective of exterior differential systems is due to the necessity of considering derivatives all orders (notably for the purpose of identifying conservation laws).  For additional works on $\mathrm{CC}_\mathrm{eds}$ the reader is encouraged to consult \cite{MR1334205, MR2861607, MR1749440}.}

\subsection*{Characteristic cohomology on $\check D$}

The first set of results address the case that $U = \check D$.  We begin with the observation that the characteristic cohomology is spanned by the de Rham cohomology classes that are Poincar\'e--dual to the Schubert solutions (Theorem \ref{T:CCcD}).  Next we show that that a homology class on $\check D$ may be be represented by a union of solutions if and only if it may be represented by a union of Schubert solutions (Theorem \ref{T:hom}).  As a corollary we obtain a non-degenerate Poincar\'e--type pairing between the characteristic cohomology and the $\cI$--homology (Corollary \ref{C:deRham}).
 
\subsection*{Characteristic cohomology on flag domains $D \subset \check D$}

Motivated by Hodge theory, we next turn to the case that $D \subset \check D$ is a (generalized) flag domain; that is, $D$ is an open orbit of a real form $G_\bR$ of $G_\bC$.  When the isotropy group $G_\bR \cap P$ is compact, the group $G_\bR$ admits the structure of a Mumford--Tate group and flag domain may be realized as Mumford--Tate domain.  Mumford--Tate groups are the symmetry groups of Hodge theory: they arise as stabilizers of the Hodge tensors for a given Hodge structure.  Mumford--Tate domains generalize period domains and are the classification spaces for Hodge structures with (possibly) additional symmetry; see \cite{MR2918237} for details.   When restricted to a flag domain $D$, the subbundle $\cT_1$ is the infinitesimal period relation (also known as Griffiths' transversality), the differential constraint governing variations of Hodge structure.\footnote{In general the IPR will not be bracket--generating; however, one may always reduce to this case \cite[Section 3.3]{ MR3217458}.}  Suppose that $X \subset \Gamma \backslash D$ is (the image of) a variation of Hodge structure; here $\Gamma \subset G_\bR$ is a discrete subgroup acting properly discontinuously on $D$ so that the quotient $\Gamma \backslash D$ is a complex analytic variety, $X$ is K\"ahler and algebraic, and the local lifts of $X$ to $D$ are integrals of $\cT_1$.  The expectation is that Hodge structures on $X$ should arise universally; that is, should be induced from objects on $\Gamma \backslash D$.  In particular, it is anticipated that the characteristic cohomology induces a mixed Hodge structure on $X$.  (This is why we take what Bryant and Griffiths term the ``Provisional Definition'' of characteristic cohomology in \cite{MR1311820}.)  For more on distribution $\cT^1$ and the characteristic cohomology $H_\cI^\sb(D)$ from the perspective of Hodge theory see J.~Carlson, M.~Green and P.~Griffiths's recent \cite{MR2559674} and the references therein.  The invariant characteristic cohomology $H^\sb_\cI(D)^{G_\bR}$ is studied in \cite{MR3217458}; loosely speaking, this cohomology describes the topological invariants of global variations of Hodge structure that can be defined independently of the monodromy.

The main result of the paper for the characteristic cohomology on $D$ is the identification of an integer $\nu > 0$ with the property that $H^k_\cI(U) \simeq H^k(U)$ for all open $U \subset D$ and $k < \nu$ (Theorem \ref{T:hh} and \eqref{E:cvc}).  Corollary to the result we find that (i) the characteristic cohomology $H^k_\cI(D)$ is finite dimensional for all $k < \nu$ (Corollary \ref{C:FD}), and (ii) a local Poincar\'e lemma holds for differential of the characteristic cohomology in degree $k < \nu$ (Corollary \ref{C:PL3}).  The integer $\nu$ is given by Kostant's theorem on Lie algebra cohomology.  (A number of examples are discussed in Appendix \ref{S:egs}.)  The proof of Theorem \ref{T:hh} makes use of a realization of the characteristic cohomology on $D$ as the total cohomology of a double complex of $G_\bR$--invariant differential operators (Theorem \ref{T:cc}).  The fact that the characteristic cohomology can be realized as the cohomology of a complex of differential operators is not new; see, for example, J.~Daniel and X.~Ma's \cite{DanielMa}.  What is new in Theorem \ref{T:cc}, and is essential for the arguments establishing Theorem \ref{T:hh}, is the explicit representation theoretic description of the $G_\bR$--homogeneous bundles and $G_\bR$--invariant differential operators forming the complex.

\subsection*{Acknowledgements}

Over the course of this work I benefited from conversations and/or correspondence with a number of people including Andreas \Cap, Jeremy Daniel, Michael Eastwood, Phillip Griffiths, Mark Green, J.M. Landsberg, Carlos Simpson 
and \Vladimir~\Soucek; I thank them for their time and insight.  

I gratefully acknowledge partial support through NSF grants DMS-1006353, 1309238.  This work was completed while I was a member of the Institute for Advanced Study: I thank the institute for a wonderful working environment and the Robert and Luisa Fernholz Foundation for financial support.

\tableofcontents

\section{Flag varieties and flag domains}  \label{S:FD}

\noindent This section is a terse review of well--established material, serving primarily to introduce notation and conventions.  For more detail see \cite{\FHW, \GGK}.

A \emph{flag variety} (or \emph{flag manifold}) is a complex homogeneous space 
\[
  \check D \ = \ G_\bC/P
\]
where $G_\bC$ is a connected, complex semisimple Lie group and $P$ is a parabolic subgroup.  A familiar example is the Grassmannian $\tGr(k,\bC^n)$ of $k$--planes in $\bC^n$; here the group is $G_\bC\simeq\tSL_n\bC$ and $P$ is the stabilizer of a fixed $k$--plane.

Let $G_\bR$ be a (connected) real form of $G_\bC$.  There are only finitely many $G_\bR$--orbits on $\check D$.  An open $G_\bR$--orbit
\[
  D \ = \ G_\bR/V
\]
is a \emph{flag domain}.  The stabilizer $V \subset G_\bR$ is the centralizer of a torus $T' \subset G_\bR$, \cite[Corollary 2.2.3]{\FHW}.  When $D$ admits the structure of a Mumford--Tate domain, there exists a compact maximal torus $T \subset G_\bR$ such that $T' \subset T \subset V$.  We will assume this to be the case throughout.\footnote{In fact, if $D$ is a Mumford--Tate domain, then $V$ is compact.  However, we will not need this.}  In particular, 
\[
 \tdim_\bR T \ = \ \trank\,\fg_\bC \,.
\]

Throughout we identify $o \in D$ with both $V/V \in G_\bR/V$ and $P/P \in G_\bC/P$.

\subsection{Lie algebra structure}

Let $\ft \subset \fv \subset \fg_\bR$ be the Lie algebras of $T \subset V \subset G_\bR$.  Given a subspace $\fs \subset \fg_\bR$, let $\fs_\bC$ denote the complexification. Then $\fh = \ft_\bC$ is a Cartan subalgebra of $\fg_\bC$.  Let $\Delta = \Delta(\fg_\bC,\fh) \subset \fh^*$ denote the roots of $\fg_\bC$.  Given a root $\a \in \Delta$, let $\fg^\a \subset \fg_\bC$ denote the corresponding root space so that 
\begin{equation} \label{E:rtdecomp}
  \fg_\bC \ = \ \fh \ \op \ \bigoplus_{\a\in\Delta} \fg^\a \,.
\end{equation}
Since $T$ is compact, the roots $\a \in \Delta$ are pure imaginary on $\ft \subset \fh$.  Therefore,
\begin{equation} \label{E:conjga}
  \overline{\fg^\a} \ = \ \fg^{-\a} \,,
\end{equation}
where conjugation $\overline{\cdot}$ on $\fg_\bC$ is defined with respect to the real form $\fg_\bR$.  

Given any subspace $\fs \subset\fg_\bC$, let
$$
  \Delta(\fs) \ = \ \{ \a \in \Delta \ | \ \fg^\a \subset \fs \} \,.
$$
Given a subspace $\fs \subset \fg_\bR$, we will abuse notation by letting $\Delta(\fs)$ denote $\Delta(\fs_\bC)$.

The facts that $\fh = \ft_\bC \subset \fv_\bC$ and $[\fh , \fv_\bC] \subset \fv_\bC$ imply that 
$$
  \fv_\bC \ = \ \fh \ \op \ \bigoplus_{\a\in\Delta(\fv_\bC)} \fg^\a \,.
$$
As discussed above, $\fv_\bC$ is the centralizer of a subalgebra $\fh' = \ft'_\bC \subset \fh$.  Equivalently, 
$$
  \Delta(\fv_\bC) \ = \ \{ \a\in\Delta \ | \ \a(\fh') = 0 \} \,.
$$
In particular, 
\begin{equation} \label{E:Dv}
  -\Delta(\fv_\bC) \ = \ \Delta(\fv_\bC) \,.
\end{equation}

A choice of \emph{simple roots} $\Sigma = \{ \s_1 , \ldots , \s_r\} \subset \Delta$ is equivalent to a choice of positive roots $\Delta^+ \subset \Delta$.  A choice of positive roots $\Delta^+$ is equivalent to a choice of Borel subalgebra $\fb \supset \fh$ of $\fg_\bC$.  Our convention is that $\Delta(\fb) = \Delta^+$;  that is, 
\begin{equation} \label{E:b}
  \fb \ = \ \fh \ \op \ \bigoplus_{\a\in\Delta^+} \fg^\a \,.
\end{equation}
Define a parabolic subalgebra
\begin{equation} \label{E:dfn_p}
  \fp \ = \ \fv_\bC \ + \ \fb \,.
\end{equation}
By \eqref{E:conjga} and \eqref{E:Dv},
\begin{equation} \label{E:vC}
  \fp \,\cap\,\overline\fp \ = \ \fv_\bC \,.
\end{equation}

\subsection{Eigenspace decompositions}

Let $\{\ttS^1,\ldots,\ttS^r\}$ denote the basis of $\fh$ dual to the simple roots,
$$
  \s_i(\ttS^j) \ = \ \d^j_i \,.
$$  
Let 
$$
   I \ = \ I(\fv_\bC,\Sigma) 
   \ \dfn \  \{ i \ | \ \s_i \not\in \Delta(\fv_\bC)\} \\
   \ \stackrel{\eqref{E:vC}}{=} \ \{ i \ | \ -\s_i \not\in\Delta(\fp) \} \,.
$$
Then 
$$
 \fv_\bC \ = \ \fh' \ \op \ \fv_\bC^\tss \,,
$$
where $\fh' = \tspan_\bC\{ \ttS^i \ | \ i \in I \}$ is the center of $\fv_\bC$, and $\fv_\bC^\tss = [\fv_\bC , \fv_\bC]$ is the semisimple subalgebra with simple roots 
\begin{equation} \label{E:Sigmav}
  \Sigma(\fv_\bC) \ = \ \{ \s_i \ | \ i \not \in I\} \,.
\end{equation}
Define
\begin{equation} \label{E:ttE}
  \ttE \ \dfn \ \ttE(\fv_\bC,\Sigma) \ = \ 
  \sum_{i \in I} \ttS^i \,.
\end{equation}

\begin{remark}
The endomorphism $\ttE$ is a \emph{grading element}.  Grading elements may be viewed as infinitesimal Hodge structures, see \cite[Section 2.3]{ MR3217458} for a discussion.
\end{remark}

As an element of $\fh$, $\ttE$ is semisimple.  Therefore, every $\fg_\bC$--module decomposes into a direct sum of $\ttE$--eigenspaces.  Given a module $\sU$, let $\Lambda(\sU)$ denote the weights of $\sU$.  Then the $\ttE$--eigenvalues of $\sU $ are $\{ \lambda(\ttE) \ | \ \lambda \in \Lambda(\sU)\}$.  
If $\sU = \fg_\bC$, then $\Lambda(\sU) = \Delta$ and the eigenvalues are integers.  Let 
\begin{subequations} \label{SE:gell}
\begin{equation}
  \fg_\bC \ = \ \bigoplus_{\ell\in\bZ} \fg_\ell
\end{equation}
be the $\ttE$--eigenspace decomposition of $\fg_\bC$; explicitly,
\begin{equation}
  \fg_\ell \ = \ \{ X \in \fg_\bC \ | \ [\ttE , X] = \ell X \} \,.
\end{equation}
\end{subequations}
In terms of the root space decomposition \eqref{E:rtdecomp} of $\fg_\bC$, we have
\begin{eqnarray*}
  \fg_\ell & = & \bigoplus_{\a(\ttE)=\ell} \fg^\a \,,
  \quad \hbox{for } \ \ell\not=0 \,,\\
  \fg_0 & = & \fh \ \op \ \bigoplus_{\a(\ttE)=0} \fg^\a \,.
\end{eqnarray*}
Then \eqref{E:conjga} implies
\begin{equation} \label{E:conjgell}
  \overline{\fg}_\ell \ = \ \fg_{-\ell} \,.
\end{equation}
From \eqref{E:vC} and \eqref{E:conjgell} we see that
\begin{equation} \label{E:v=g0}
  \fv_\bC \ = \ \fg_0 \,.
\end{equation}

Let 
$$
  \fg_+ \ = \ \bigoplus_{\ell>0} \fg_\ell \tand
  \fg_- \ = \ \bigoplus_{\ell>0} \fg_{-\ell} \,.
$$
Then \eqref{E:dfn_p} implies
\begin{equation} \label{E:p}
  \fp \ = \ \fg_{\ge0} \ = \ \fg_0 \,\op\, \fg_+ \,.
\end{equation}


The Jacobi identity yields
\begin{equation} \label{E:grLB}
  [\fg_\ell,\fg_m] \ \subset \ \fg_{\ell+m} \,.
\end{equation}
The property \eqref{E:grLB} implies both $\fg_\pm$ are nilpotent, and that each 
\begin{equation} \label{E:gell}
  \hbox{$\fg_\ell$ is a $\fg_0$--module.}
\end{equation}  
The Killing form $B : \fg_\bC \times \fg_\bC \to \bC$ yields a $\fg_0$--module identification 
\begin{equation} \label{E:dual}
  \fg_\ell^* \ \simeq \ \fg_{-\ell} \,.
\end{equation}

\section{The infinitesimal period relation and characteristic cohomology} \label{S:IPR+CC}

\subsection{The infinitesimal period relation} \label{S:IPR}

The holomorphic tangent space at $o \in \check D$ is identified with $\fg_\bC/\fp$, as a $\fp$--module, and the \emph{holomorphic tangent bundle} is the $G_\bC$--homogeneous bundle
$$
  \cT \check D \ = \ G_\bC \times_P (\fg_\bC/\fp) \,.
$$
The equations \eqref{E:p} and \eqref{E:grLB} imply that $\fg_{\ge-1}/\fp$ is a $\fp$--module.  The homogeneous subbundle
\begin{equation} \nonumber 
  \cT_1 \ \dfn \ G_\bC \times_P (\fg_{\ge-1}/\fp)
\end{equation}
is the \emph{holomorphic infinitesimal period relation} on $\check D$.  

Let $T\check D$ denote the (real) tangent space, and $T_\bC \check D$ its complexification, so that 
$$
  \cT \check D \,\op\,\overline{\cT \check D} \ = \ T_\bC\check D \,.
$$
The \emph{complexified infinitesimal period relation} is 
$$
  T_{1,\bC} \ \dfn \ \cT_1 \,\op\,\overline{\cT_1} \ \subset \ T_\bC \check D \,.
$$
Finally, 
$$
  T_1 \ \dfn \ T_{1,\bC} \,\cap\, T\check D
$$
is the (real) \emph{infinitesimal period relation} (IPR).

A \emph{variation of Hodge structure} (VHS) is a solution of the IPR.  By this we mean either: (i) a connected complex submanifold $M \subset \check D$ with the property that $T M \subset \left. T^1\right|_{M}$; or (ii) irreducible variety $Y \subset \check D$ such that $T_yY \subset T_{1,y}$ for all smooth $y \in Y$.  (Equivalently, the smooth locus $M = Y^0$ is a solution in the first sense.)

\subsection{Bracket--generation} \label{S:BG}

The eigenspace decomposition \eqref{SE:gell} satisfies 
\begin{equation} \label{E:BG}
  \fg_{\ell+1} \ = \ [\fg_\ell,\fg_1] \tand
  \fg_{-\ell-1} \ = \ [\fg_{-\ell},\fg_{-1}] \quad\hbox{for any } \ \ell > 0\,,
\end{equation}
\cf \cite[Proposition 3.1.2]{MR2532439}.  Equivalently, the subbundles $T_1 \subset T D$ and $\cT_1 \subset \cT D$ are bracket--generating.

\begin{remark}
In general, the IPR, as it arises in Hodge theory, will not be bracket--generating.  However, for the purpose of studying the IPR, we may reduce to the case that it is, \cf\cite[Section 3.3]{ MR3217458}.
\end{remark}

\subsection{Characteristic cohomology} \label{S:dfnCC}

Given an open subset $U \subset \check D$, let $\cA_U$ denote the graded ring of smooth, complex--valued differential forms on $U$, and let $\cI_U \subset \cA_U$ be the graded, differential ideal generated by the smooth sections $\varphi : U \to \left.\tAnn(T_{1,\bC})\right|_U$ and their exterior derivatives $d \varphi$.  By construction $\cI_U$ is differentially closed:
$$
  \td\cI_U \ \subset \ \cI_U \,.
$$
Whence the de Rham complex $(\cA_U,\td)$ induces a quotient complex $(\cA_U/\cI_U , \td )$.  The \emph{characteristic cohomology} of the IPR on $U \subset \check D$ is the associated cohomology
$$
  H_{\cI}^\sb(U) \ \dfn \ H^\sb( \cA_U/\cI_U , d ) \,.
$$
Note that $M \subset U$ is a VHS if and only if $\left.\cI_U\right|_M = 0$.  (For this reason, we also call $\cI$ the \emph{infinitesimal period relation}.)  Therefore, the characteristic cohomology pulls--back to de Rham cohomology on $M$; that is, there exists a natural map $H_\cI^\sb(U) \to H^\sb(M,\bC)$.  This is the sense in which the characteristic cohomology induces ordinary cohomology on solutions.

\section{Characteristic cohomology on the compact dual} \label{S:CCcD}

In this section we consider the global characteristic cohomology; that is, we fix $U = \check D$.  Through out this section we simplify notation by writing $\cA$ and $\cI$ for $\cA_{\check D}$ and $\cI_{\check D}$, respectively.  We will see that the Schubert varieties $X_w \subset\check D$ and their homology classes $\bx_w \in H_\sb(\check D , \bZ)$ play a key r\^ole here.  The terminology \emph{Schubert VHS} indicates a Schubert variety that is also a VHS (Section \ref{S:IPR}).  The three main results of this section are as follows: First, the characteristic cohomology is spanned by the cohomology classes dual to the Schubert VHS (Theorem \ref{T:CCcD}).  Second, a homology class $\mathbf{y} \in H_\sb(\check D, \bZ)$ may be represented by a union $Y_1\cup\cdots\cup Y_s$ of VHS if and only if it may be represented by a union of Schubert VHS (Theorem \ref{T:hom}).  As a corollary to these two theorems, we obtain the third result, an $\cI$--de Rham theorem (Corollary \ref{C:deRham}).  Schubert varieties and the characterization of Schubert VHS are briefly reviewed in Sections \ref{S:schub} and \ref{S: MR3217458}.

\subsection{Schubert varieties} \label{S:schub}

This section does little more than establish notation for our discussion of Schubert varieties.  The reader interested in greater detail is encouraged to consult \cite{ MR3217458} and the references therein.

Given simple root $\s_i\in\Sigma$, let $(i) \in \tAut(\fh^*)$ denote the corresponding \emph{simple reflection}.  The \emph{Weyl group} $W \subset \tAut(\fh^*)$ of $\fg_\bR$ is the group generated by the simple reflections $\{ (i) \ | \ \s_i\in\Sigma \}$.  A composition of simple reflections $(i_1) \circ (i_2) \circ\cdots\circ(i_t)$, which are understood to act on the left, is written $(i_1 i_2 \cdots i_t) \in W$.  The \emph{length} of a Weyl group element $w$ is the minimal number 
$$
  |w| \ \dfn \ \tmin\{ \ell \ | \ w = (i_1 i_2 \cdots i_\ell) \}
$$
of simple reflections necessary to represent $w$.

Let $W_\fp \subset W$ be the subgroup generated by the simple reflections $\{(i) \ | \ i \not \in I\}$.  Then $W_\fp$ is naturally identified with the Weyl group of $\fg^0$.  The rational homogeneous variety $G/P$ decomposes into a finite number of $B$--orbits 
$$
  G/P \ = \ \bigcup_{W_\fp w \in W_\fp\backslash W} B w^{-1} o 
$$
which are indexed by the right cosets $W_\fp\backslash W$.  The \emph{$B$--Schubert varieties} of $G/P$ are the Zariski closures
$$
  X_w \ \dfn \ \overline{B w^{-1} o }\,.
$$

Let 
$$
  \bx_w \ \dfn \ [X_w] \ \in \ H_\sb(\check D, \bZ)
$$
denote the homology class represented by the Schubert variety.  Borel \cite{MR0077878} showed that the Schubert classes form a free additive basis of the integral homology
$$
  H_\sb (\check D , \bZ) \ = \ \tspan_\bZ\{ \bx_w \ | \ w \in W^\fp \} \,.
$$
Since $G_\bC$ is path connected, any $G_\bC$--translate $g X_w$ satisfies $[g X_w] = \bx_w$.  We will refer to any of these translates as a Schubert variety (of type $W_\fp w$).  

Each right coset $W_\fp\backslash W$ admits unique representative of minimal length; let 
$$
  W^\fp \ \simeq \ W_\fp \backslash W
$$ 
be the set of minimal length representatives.  (See Appendix \ref{S:kostant} for a terse discussion of how $W^\fp$ is determined.)  For a minimal representative $w \in W^\fp$, the Schubert variety $w X_w$ is the Zariski closure of $N_w \cdot o$, where $N_w \subset G$ is a unipotent subgroup with nilpotent Lie algebra 
\begin{equation} \label{E:nw}
  \fn_w \ \dfn \ \bigoplus_{\a\in \Delta(w)} \fg^{-\a} \ \subset \ \fg^{-}
\end{equation}
given by 
\begin{equation} \label{E:Dw}
  \Delta(w) \ \dfn \ \Delta^+ \,\cap \, w(\Delta^-) \,.
\end{equation}
Moreover, $N_w \cdot o$ is an affine cell isomorphic to $\fn_w$, and $\tdim\,X_w = \tdim\,\fn_w = |\Delta(w)|$.  Indeed 
$$
  T_o X_w \ = \ \fn_w \,.
$$
For any $w \in W^\fp$ we have 
\begin{equation}\label{E:len=dim}
  |w| \ = \ |\Delta(w)| \ = \ \tdim\,X_w \,.
\end{equation}

\subsection{Schubert VHS} \label{S: MR3217458}

A Schubert variety $X_w$ is a VHS if and only if $\Delta(w) \subset \Delta(\fg_1)$, where $\Delta(w)$ is given by \eqref{E:Dw}, \cf\cite[Theorem 3.8]{ MR3217458}.  A convenient way to test for this condition is as follows.  Let
$$
  \rho \ \dfn \ \sum_{i=1}^r \w_i \ = \ \half \sum_{\a\in\Delta^+} \a
$$
be the sum of the fundamental weights (which is also half the sum of the positive roots).  Define
\begin{equation} \label{E:dfn_rho}
  \rho_w \ \dfn \ \rho \,-\, w(\rho) \ = \ \sum_{\a\in\Delta(w)} \a \,.
\end{equation}
(See \cite[(5.10.1)]{MR0114875} for the second equality.)  Then 
$$
  |w| \ \le \ \rho_w(\ttE) \ \in \ \bZ \,,
$$
and equality holds if and only if $\Delta(w) \subset \Delta(\fg_1)$; equivalently, $X_w$ is a variation of Hodge structure if and only if $\rho_w(\ttE) = |w|$.  See \cite[Section 3.5]{ MR3217458} for details.  Let
$$
  W_\mathrm{vhs} \ \dfn \ \{ w \in W^\fp \ | \ \rho_w(\ttE) = |w| \} 
$$
be the set indexing the Schubert variations of Hodge structure.\footnote{The sets $W_\mathrm{vhs} \subset W^\fp$ are denoted by $W^\varphi_\sI \subset W^\varphi$ in \cite{ MR3217458}.}  

\subsection{Characteristic cohomology} \label{S:schubCC}

Let $\bx^w \in H^\sb(\check D,\bZ)$ denote the cohomology classes dual to the Schubert classes $\bx_w$ (Section \ref{S:schub}).  Roughly, the following theorem asserts that the characteristic cohomology is spanned by the classes dual to the Schubert VHS.  

\begin{theorem}\label{T:CCcD}
Let $p_\cI : H^\sb(\check D) \to H^\sb_\cI(\check D)$ be the ring homomorphism induced by the natural map $(\cA,\td) \to (\cA/\cI , \td)$ of complexes.  Then $p_\cI$ is surjective and 
\[
  \tker\,p_\cI \ = \ \tspan\{ \bx^w \ | \ w \in W^\fp\backslash W_\mathrm{vhs} \} \,.
\]
In particular, the map $p_\cI$ is given by 
\[
  \bc \ = \ \sum_{w \in W^\fp} c_w \bx^w \quad \mapsto \quad
  \bc_\cI \ \equiv \ \sum_{w \in W_\mathrm{vhs}} c_w \bx^w \,.
\]
Thus, $H^\sb_\cI(\check D) \equiv \tspan\{ \bx^w \ | \ w \in W_\mathrm{vhs}\}$. 
\end{theorem}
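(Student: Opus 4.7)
The plan is to combine a geometric argument (for one inclusion of the kernel) with a Kostant--style Lie algebra cohomology computation (for the other inclusion); surjectivity then follows as a consequence of these two facts. Since $\check D$ carries a Bruhat cell decomposition, $\{\bx^w \ | \ w \in W^\fp\}$ is a basis of $H^\sb(\check D, \bC)$ and it suffices to analyze the images $p_\cI(\bx^w)$ individually.

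For the inclusion $\tker\,p_\cI \subseteq \tspan\{\bx^w \ | \ w \in W^\fp\backslash W_\mathrm{vhs}\}$: given $w \in W_\mathrm{vhs}$, the smooth locus of the Schubert VHS $X_w$ is an integral submanifold of $\cT_1$, so $\left.\cI\right|_{X_w^{sm}} = 0$ and integration descends to a linear functional $\lambda_w \colon H^{|w|}_\cI(\check D) \to \bC$. The composition $\lambda_w \circ p_\cI$ equals the ordinary Kronecker pairing $\a \mapsto \a(\bx_w)$, so the Schubert duality $\bx^{w'}(\bx_w) = \d_{ww'}$ forces $c_w = 0$ for every $w \in W_\mathrm{vhs}$ whenever $\bc = \sum c_{w'} \bx^{w'}$ lies in $\tker\,p_\cI$.

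For the reverse inclusion, the plan is to produce an explicit representative of $\bx^w$ (for $w \not\in W_\mathrm{vhs}$) that lies in $\cI$. Averaging over a maximal compact subgroup $K \subset G_\bC$ preserves both $\cA$ and $\cI$ (the latter because $\cT_1$ is $G_\bC$--homogeneous, hence in particular $K$--stable), and reduces the computation to $K$--invariant forms on $\check D = K/(K \cap P)$; these are identified with a Chevalley--Eilenberg type complex whose cohomology is $H^\sb(\fg_-, \bC)$, together with its conjugate. Kostant's theorem then supplies a distinguished harmonic representative of $\bx^w$ of the form $\bigwedge_{\a \in \Delta(w)} \omega_\a$, where $\omega_\a \in \fg_-^*$ is the covector dual to the root vector spanning $\fg^{-\a}$. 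For $w \not\in W_\mathrm{vhs}$, the strict inequality $\rho_w(\ttE) > |w|$ recalled in Section \ref{S: MR3217458} forces at least one $\a \in \Delta(w)$ to satisfy $\a(\ttE) \ge 2$, i.e.\ $\fg^{-\a} \subset \fg_{\le -2}$; since $\cT_1|_o \cong \fg_{-1}$, the covector $\omega_\a$ annihilates $\cT_1|_o$ and hence lies in $\tAnn(T_{1,\bC})|_o$. Thus the whole wedge lies in $\cI|_o$, and $K$--invariance propagates this conclusion to all of $\check D$, showing $p_\cI(\bx^w) = 0$.

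Surjectivity follows from a dimension count: the two inclusions above identify $\tim\,p_\cI \simeq \tspan\{p_\cI(\bx^w) \ | \ w \in W_\mathrm{vhs}\}$, while the functionals $\lambda_w$ separate points of $H^\sb_\cI(\check D)$ and the Kostant-based computation yields $\dim H^\sb_\cI(\check D) = |W_\mathrm{vhs}|$, so the image exhausts $H^\sb_\cI(\check D)$. The main obstacle is the Lie-algebraic step: one must justify that $K$--averaging is compatible with the ideal structure (requiring that the inclusion of $K$--invariant forms into $\cA$ induces isomorphisms on \emph{both} $H^\sb(\check D)$ and $H^\sb_\cI(\check D)$), and then match Kostant's harmonic representatives in the Koszul complex $\bigwedge^\sb \fg_-^*$ with the geometric Schubert duals $\bx^w$ on $\check D$, taking care with the conventions for root vectors, sign choices, and the role of the conjugate factor $\fg_+^*$ in the real $K$--invariant complex.
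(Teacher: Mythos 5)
Your computation of $\tker p_\cI$ is correct and matches the paper's underlying route (compare Lemma \ref{L:hom}, which the present paper proves by exactly your Kostant--representative argument): integration over the Schubert VHS forces $c_w = 0$ for $w \in W_\mathrm{vhs}$, and Kostant's $K$--invariant harmonic representative of $\bx^w$ lies in $\cI$ precisely when some $\alpha \in \Delta(w)$ has $\alpha(\ttE) \ge 2$, i.e.\ when $w \notin W_\mathrm{vhs}$. The small imprecision you flag is real but harmless: the Kostant representative is a $(|w|,|w|)$--form built from both $\fg_-^*$ and its conjugate $\fg_+^*$, not the $(|w|,0)$--form $\bigwedge_{\alpha\in\Delta(w)}\omega_\alpha$, but a single factor $\omega_\alpha$ with $\alpha(\ttE)\ge 2$ already annihilates $T_{1,\bC}|_o$, so the whole form still lies in $\cI$.

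Surjectivity, however, is a genuine gap. Both facts you invoke to close it --- that the $\lambda_w$ separate points of $H^\sb_\cI(\check D)$, and that $\dim H^\sb_\cI(\check D)=|W_\mathrm{vhs}|$ --- are themselves equivalent to surjectivity once the kernel is known, so the reasoning is circular. The ``main obstacle'' you name is also not a routine technicality to be dispatched by compact-group averaging: the standard Cartan chain homotopy is assembled from contractions $\iota_X$ with $X$ a Killing field, and $\iota_X$ does \emph{not} preserve $\cI$ (contracting a generating $1$--form $\varphi\in\cC^\infty(\tAnn(T_{1,\bC}))$ against $X$ produces the function $\varphi(X)$, which lies outside the ideal), so the homotopy fails to descend to $\cA/\cI$ and one cannot conclude $H^\sb_\cI(\check D)\simeq H^\sb(\cA^K/\cI^K)$ by averaging alone. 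Closing the gap requires the homogeneous-bundle realization of $\cA/\cI$ over $\check D$ (the compact-dual analogue of Theorem \ref{T:cc}) together with a direct cohomology computation of the resulting finite-rank complex; this is the content of the reference the paper cites, and your proposal stops short of it.
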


\noindent Above, we use $\equiv$ (in place of $=$) to emphasize that $\bc_\cI \in H^\sb(\check D) / \tker\,p_\cI$.

\begin{proof}
Given \cite[(4.5)]{ MR3217458}, this follows from the same arguments in \cite[Sections 4.1.3--4.1.5]{ MR3217458} which establish \cite[Theorem 4.1]{ MR3217458}.
\end{proof}

\subsection{Homology of VHS} \label{S:hom}

We next identify the homology classes $\by \in H_\sb(\check D,\bZ)$ that may be represented by a union of VHS.  First, by Borel's result (Section \ref{S:schubCC}), the homology class represented by a subvariety $Y \subset G_\bC/P$ is a linear combination of the form
\begin{equation} \label{E:[Y]}
  [Y] \ = \ \sum_{w \in W^\fp} n^w \bx_w \,,
\end{equation}
with \emph{nonnegative} coefficients $0 \le n^w \in \bZ$.  We will show that a homology class may be represented by a (union of) VHS if and only if it may be represented by a union of Schubert VHS.

\begin{theorem} \label{T:hom}
A homology class $\by \in H_\sb(\check D,\bZ)$ may be represented by a union of VHS if and only if 
\begin{equation} \label{E:hom}
  \by \ = \ \sum_{w\in\Wvhs} n^w \bx_w \quad\hbox{with } \ 0 \le n^w \in \bZ \,.
\end{equation}
\end{theorem}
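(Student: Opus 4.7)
The plan is to prove both implications, with Theorem \ref{T:CCcD} supplying the essential input for the necessary direction.

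\emph{Sufficiency} ($\Leftarrow$). Given $\by = \sum_{w \in \Wvhs} n^w \bx_w$ with $n^w \ge 0$, decompose by degree; without loss of generality all classes in the sum have the same dimension. For each $w \in \Wvhs$ with $n^w > 0$, I would take $n^w$ generic $G_\bC$--translates $g_{w,1} X_w, \ldots, g_{w,n^w} X_w$ of the Schubert variety $X_w$. Because $\cT_1$ is $G_\bC$--homogeneous each translate $g X_w$ is again a Schubert VHS, and because $G_\bC$ is connected each represents the class $\bx_w$. For generic choice of the $g_{w,j}$ the translates (across all pairs $(w,j)$) are pairwise distinct, so the union $Y = \bigcup_{w,j} g_{w,j} X_w$ is equidimensional with distinct irreducible components, yielding $[Y] = \sum_{w,j} \bx_w = \by$.

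\emph{Necessity} ($\Rightarrow$). Write $\by = [Y_1 \cup \cdots \cup Y_s]$ where each $Y_i$ is a (hence irreducible) VHS. By Borel's theorem (Section \ref{S:schubCC}) we have $\by = \sum_{w \in W^\fp} n^w \bx_w$ with $0 \le n^w \in \bZ$, and I would show $n^w = 0$ for $w \not\in \Wvhs$ by evaluating the Kronecker pairing
\[
  n^w \ = \ \langle \bx^w, \by \rangle \ = \ \sum_i \langle \bx^w, [Y_i] \rangle \,.
\]
By Theorem \ref{T:CCcD}, for $w \not\in \Wvhs$ the class $\bx^w$ lies in $\tker\, p_\cI$, so it admits a closed representative $\alpha_w \in \cI$: if $p_\cI[\omega] = 0$ then $\omega = \alpha + \td\gamma$ for some $\alpha \in \cI$ and $\gamma \in \cA$, and $\alpha = \omega - \td\gamma$ is closed and lies in $\cI$. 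Since each $Y_i$ is a VHS, the ideal $\cI$ pulls back to zero on the smooth locus $Y_i^0$, so $\alpha_w|_{Y_i^0} = 0$ and therefore $\int_{Y_i^0} \alpha_w = 0$ for each $i$; summing gives $n^w = 0$.

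The main technical issue is the identification $\langle \bx^w, [Y_i] \rangle = \int_{Y_i^0} \alpha_w$ when $Y_i$ is singular. I would appeal to the standard fact that an irreducible subvariety of the compact complex manifold $\check D$ defines a closed current of integration representing its fundamental homology class, and that pairing with a closed smooth form is computed by integration over the smooth locus (finiteness of the Hausdorff volume of $Y_i$ in $\check D$ ensures convergence). Granted this, the argument reduces to a clean combination of Theorem \ref{T:CCcD} with Borel's identification of $H_\sb(\check D, \bZ)$ as the $\bZ$--span of the Schubert classes.
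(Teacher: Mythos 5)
Your proof is correct, and the necessity direction takes a genuinely different route from the paper's. Where the paper invokes Lemma~\ref{L:hom} --- which rests on Kostant's construction of explicit $K$-invariant differential forms $\omega^w$ representing the Schubert classes, together with a direct inspection of Kostant's formula to verify $\omega^w\in\cI$ when $w\notin\Wvhs$ --- you extract the needed closed representative $\alpha_w\in\cI$ of $\bx^w$ directly from the statement of Theorem~\ref{T:CCcD} by the standard homological observation that a de~Rham class killed by $p_\cI$ differs from an exact form by an element of $\cI$. This is a clean shortcut: it avoids any appeal to Kostant's formulae (beyond whatever is already buried in the proof of Theorem~\ref{T:CCcD}), at the cost of losing the $K$-invariance of the representative, which is not needed here. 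The price of your route is that you only recover the one direction of Lemma~\ref{L:hom} that the theorem uses, whereas the paper's lemma is an honest biconditional with a distinguished invariant representative, useful in its own right. The current-of-integration technicality you flag is handled identically (and silently) in the paper via $n^w=\int_Y\bx^w$. On sufficiency you are if anything more careful than the paper: the paper's one-liner $Y=\sum n^w X_w$ does not literally produce a union with the correct fundamental class when some $n^w>1$, whereas your generic $G_\bC$-translates ensure distinct irreducible components, and the $G_\bC$-homogeneity of $\cT_1$ correctly justifies that translates of Schubert VHS remain Schubert VHS.
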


The \emph{$\cI$--homology of the IPR} is the homology
$$
  H_{\sb,\cI}(\check D) \ = \ \tspan\{ [Y] \in H_\sb(\check D) \ | \ Y \hbox{ is a VHS}\}\,.
$$
From Theorems \ref{T:CCcD} and \ref{T:hom} we obtain

\begin{corollary}[The $\cI$--de Rham theorem for the compact dual] \label{C:deRham}
The Poincar\'e pairing 
$$
  H_{\sb,\cI}(\check D) \,\times\, H^\sb_\cI(\check D) \ \to \ \bC
$$
is nondegenerate.
\end{corollary}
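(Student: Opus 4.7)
The plan is to combine Theorems \ref{T:CCcD} and \ref{T:hom}, which identify both factors of the pairing with spans of Schubert classes indexed by $\Wvhs$, and then invoke the duality between Schubert bases in homology and cohomology of $\check D$. No new analytic input should be required.

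First, I would use Theorem \ref{T:CCcD} to identify the characteristic cohomology $H^\sb_\cI(\check D) = H^\sb(\check D)/\tker p_\cI$ with $\tspan_\bC\{\bx^w \mid w \in \Wvhs\}$, and to note that $\tker p_\cI = \tspan_\bC\{\bx^w \mid w \in W^\fp \setminus \Wvhs\}$. For the $\cI$--homology, each Schubert VHS $X_w$ with $w \in \Wvhs$ represents $\bx_w \in H_{\sb,\cI}(\check D)$, while conversely Theorem \ref{T:hom} asserts that every class $[Y] \in H_{\sb,\cI}(\check D)$ is a nonnegative integer combination of such $\bx_w$. Together these give $H_{\sb,\cI}(\check D) = \tspan_\bC\{\bx_w \mid w \in \Wvhs\}$.

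Next, I would invoke Borel's theorem, which guarantees that the Schubert bases $\{\bx_w\}_{w \in W^\fp}$ and $\{\bx^w\}_{w \in W^\fp}$ are mutually dual under the Poincar\'e pairing on $\check D$, so that $\langle \bx_w , \bx^{w'}\rangle = \d_{w,w'}$. Because $\Wvhs$ and $W^\fp \setminus \Wvhs$ are disjoint, $H_{\sb,\cI}(\check D) \subset H_\sb(\check D)$ pairs trivially with $\tker p_\cI \subset H^\sb(\check D)$; consequently, the Poincar\'e pairing descends to a well-defined bilinear form $H_{\sb,\cI}(\check D) \times H^\sb_\cI(\check D) \to \bC$. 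With respect to the dual pair of Schubert bases indexed by $\Wvhs$, the matrix of the descended pairing is $\d_{w,w'}$, so it is nondegenerate on this finite-dimensional space.

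I do not anticipate a serious obstacle: the substantive content has already been packaged in Theorems \ref{T:CCcD} and \ref{T:hom}, and what remains is a linear-algebraic observation about restricting Schubert-dual bases to indices in $\Wvhs$. The only point I would verify carefully is that the topological Poincar\'e pairing really does coincide, on Schubert classes, with the Kronecker pairing $\bx_w \mapsto \bx^{w'}(\bx_w) = \d_{w,w'}$ used above; this is, however, the very definition of the Schubert cohomology basis as dual to the Schubert homology basis, so no extra work is needed.
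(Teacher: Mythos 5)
Your proposal is correct and follows exactly the route the paper intends: the paper itself introduces the corollary with the single line ``From Theorems \ref{T:CCcD} and \ref{T:hom} we obtain,'' leaving the details implicit, and your argument is the natural fleshing-out of that line. The two theorems pin down $H_{\sb,\cI}(\check D)=\tspan_\bC\{\bx_w\mid w\in\Wvhs\}$ and $H^\sb_\cI(\check D)\equiv\tspan_\bC\{\bx^w\mid w\in\Wvhs\}$ with $\tker p_\cI=\tspan_\bC\{\bx^w\mid w\in W^\fp\setminus\Wvhs\}$, and the rest is exactly the linear-algebra observation you make: since $\langle\bx_w,\bx^{w'}\rangle=\d_{w,w'}$ by definition of the dual Schubert cohomology basis, the subspace pairs trivially with $\tker p_\cI$ (so the pairing descends), and its matrix on the $\Wvhs$-indexed bases is the identity (so it is nondegenerate).
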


\begin{proof}[Proof of Theorem \ref{T:hom}]
Of course the implication $(\Longleftarrow)$ is trivial: given \eqref{E:hom}, the homology class $\by$ is represented by 
$$
  Y \ = \ \sum_{w \in \Wvhs} n^w X_w\,. 
$$

For the converse $(\Longrightarrow)$ we may assume that $\by = [Y]$ with $Y$ an irreducible VHS.  The coefficients of \eqref{E:[Y]} are given by 
\begin{equation} \label{E:prf}
  n^w \ = \ \int_Y \bx^w  \,,
\end{equation}
with $|w|$ the (complex) dimension of $Y$. Recall (Section \ref{S:dfnCC}) that a subvariety $Y \subset \check D$ is a VHS if and only if $\cI$ vanishes when pulled-back to the smooth locus of $Y$.  Suppose that $w \in W^\fp \backslash \Wvhs$ indexes a Schubert variety that is \emph{not} a VHS.  Then $\bx^w$ admits a representative that is contained in the ideal $\cI$ (Lemma \ref{L:hom}).  Whence, \eqref{E:hom} follows from \eqref{E:prf} and the hypothesis that $Y$ is a VHS.
\end{proof}

\begin{lemma}  \label{L:hom}
The cohomology class $\bx^w$ admits a representative (which we may take to be invariant with respect to a compact real form $K$ of $G_\bC$) that is contained in the ideal $\cI$ if and only if $w \in W^\fp \backslash \Wvhs$ indexes a Schubert variety that is \emph{not} a VHS.
\end{lemma}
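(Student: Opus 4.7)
\textbf{Proof plan for Lemma \ref{L:hom}.} The plan is to treat the two directions separately: the $(\Rightarrow)$ direction by pulling back a supposed representative to a Schubert VHS, and the $(\Leftarrow)$ direction by exhibiting a Kostant-type $K$-invariant harmonic representative of $\bx^w$ and checking its support via the root-space decomposition.

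For $(\Rightarrow)$, I will suppose that $\omega \in \cI$ represents $\bx^w$ and derive a contradiction from $w \in \Wvhs$. In that case $X_w$ is a VHS, so $\cI|_{X_w^{\,0}} = 0$ on the smooth locus and $\omega|_{X_w^{\,0}} = 0$; integration gives $\int_{X_w} \omega = 0$. But Poincar\'e duality together with \eqref{E:prf} yield $\int_{X_w} \bx^w = \langle \bx^w , \bx_w\rangle = 1$, a contradiction. Hence $w \notin \Wvhs$. Note this direction does not use the $K$-invariance of $\omega$.

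For $(\Leftarrow)$, assume $w \in W^\fp\setminus\Wvhs$ and use the $K$-invariant harmonic representative $\phi_w$ of $\bx^w$ furnished by Kostant's theorem on $\check D \simeq K/(K\cap P)$; this is the structural input already underlying Theorem \ref{T:CCcD}, via the identification \cite[(4.5)]{MR3217458}. Under the Killing-form identification $T^*_{o,\bC}\check D \simeq \fg_+ \oplus \overline{\fg_+}$, the annihilator $\tAnn(T_{1,\bC})_o$ corresponds to $\fg_{\ge 2} \oplus \overline{\fg_{\ge 2}}$, while the holomorphic factor of $\phi_w(o)$ lies in $\bw^{|w|}\fg_+$ and has $\ttE$-weight $\rho_w(\ttE) = \sum_{\a\in\Delta(w)}\a(\ttE)$, in accordance with \eqref{E:dfn_rho} and the root-space description $\fn_w \subset \fg_-$ of the Schubert tangent space at $o$.

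The remaining step is an elementary weight observation: any $\ttE$-eigenvector in $\bw^{|w|}\fg_+$ of eigenvalue strictly greater than $|w|$ must have at least one wedge factor in $\fg_{\ge 2}$, since a decomposable wedge $v_1 \wedge \cdots \wedge v_{|w|}$ with $v_i \in \fg_{\ell_i}$ and $\ell_i \ge 1$ has eigenvalue $\sum \ell_i$, and $\sum \ell_i > |w|$ forces some $\ell_i \ge 2$. Because $w \notin \Wvhs$, the criterion recalled in Section \ref{S: MR3217458} gives $\rho_w(\ttE) > |w|$, so $\phi_w(o)$ lies in the algebraic subspace of $\bw T^*_{o,\bC}\check D$ generated by $\tAnn(T_{1,\bC})_o$. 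Since $\cT_1$ is $G_\bC$-homogeneous, the bundle $T_{1,\bC}$ and hence the differential ideal $\cI$ are $K$-invariant; combined with the $K$-invariance of $\phi_w$, the pointwise inclusion at $o$ propagates under $K$-translation to all of $\check D$, showing that $\phi_w$ lies in the algebraic subideal of $\cI$ generated by sections of $\tAnn(T_{1,\bC})$, whence $\phi_w \in \cI$.

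The main obstacle is the Kostant-type identification of $\phi_w(o)$, namely the fact that the $K$-invariant harmonic representative is supported in the $\ttE$-weight subspace controlled by $\rho_w(\ttE)$. Granted this representation-theoretic input (which I import from \cite{MR3217458}), the remainder is a purely graded-linear-algebra calculation on the root-space decomposition \eqref{E:rtdecomp} together with the $K$-invariance of $\cI$.
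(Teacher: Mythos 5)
Your proof is correct and follows essentially the same route as the paper. The $(\Rightarrow)$ direction is the paper's argument made slightly more explicit: a representative of $\bx^w$ in $\cI$ pulls back to zero on every VHS, while $\int_{X_w}\bx^w = 1$, so $X_w$ cannot be a VHS. For $(\Leftarrow)$, both you and the paper invoke Kostant's $K$-invariant harmonic representative and read off membership in $\cI$ from its expression at $o$; where the paper simply says ``from this formula we see that $\w^w\in\cI$ if and only if $w\in W^\fp\backslash\Wvhs$,'' you unpack exactly why: the holomorphic factor lies in $\bw^{|w|}\fg_+$ with $\ttE$-weight $\rho_w(\ttE)$, and $\rho_w(\ttE)>|w|$ forces a wedge factor in $\fg_{\ge 2}\subset\tAnn(T_{1,\bC})_o$, whence the form lies in the (pointwise) ideal $\ffi_1$ and hence, by $K$-invariance of both $\cI$ and $\phi_w$, in $\cI$. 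Two small points worth noting. First, the paper attributes the crucial structural input to Kostant's \cite[Theorems 5.6, 6.15]{MR0142697} directly, rather than to \cite{MR3217458}; your attribution is not wrong in substance but the sharper citation is to Kostant. Second, you should observe that a non-decomposable $\ttE$-eigenvector of eigenvalue $>|w|$ is a linear combination of decomposables each with some factor in $\fg_{\ge2}$, so each summand lies in the ideal individually — you state this correctly, but it is the step where the argument could silently fail if one only tested a single decomposable term.
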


\begin{proof}
Suppose that the cohomology class $\bx^w$ admits a representative $\phi \in \cI$.  Then $\phi$ vanishes on every VHS.  In particular, $\phi$ vanishes on $X_v$ for all $v \in \Wvhs$.  Since $\phi$ does not vanish on the Schubert variety $X_w$, it follows that $w \not\in \Wvhs$ and $X_w$ is not a VHS.

The converse is a consequence of Kostant's \cite{MR0142697} and the description of the Schubert VHS in Section \ref{S:schub}.  Kostant exhibits a $K$--invariant differential form $\w^w$ representing a (positive) multiple of the class $\bx^w$, \cf\cite[Theorem 6.15]{MR0142697}.  Let $s^w = \w^w_o$ denote the form at $o \in \check D$.  Then a formula for $s^w$ is given by \cite[Theorem 5.6]{MR0142697}.  From this formula we see that $\w^w \in \cI$ if and only if $w \in W^\fp \backslash \Wvhs$.  So, if $X_w$ is not a VHS, then $\w^w \in \cI$.
\end{proof}

\section{A double complex on the flag domain} \label{S:CCD}

The main result of this section is the identification of the characteristic cohomology $H^\sb_\cI(D)$ with the total cohomology of a double complex of $G_\bR$--invariant differential operators (Theorem \ref{T:cc}).  The fact that the characteristic cohomology can be realized as cohomology on a complex of vector bundles over $D$ is well--understood, \cf\cite{DanielMa}; the significance of Theorem \ref{T:cc} is that it gives an explicit, representation theoretic description of the $G_\bR$--homogeneous vector bundles in the double complex.  This provides the information necessary to prove the results in Section \ref{S:cvc} relating the characteristic cohomology to the de Rham cohomology.

\subsection{$G_\bR$--homogeneous bundles on $D$} \label{S:onD}

Recall (Section \ref{S:IPR}) that the holomorphic tangent space $\cT_oD \simeq \cT_o\check D \simeq \fg_\bC/\fp$ as a $\fp$--module.  It follows from \eqref{E:v=g0} and \eqref{E:p} that $\cT_o D \simeq \fg_-$ as a $V$--module.  Therefore, the holomorphic tangent bundle of $D$ is the $G_\bR$--homogeneous vector bundle
\begin{equation}\label{E:dfn_cT}
  \cT D \ = \ G_\bR \times_V \fg_- \,.
\end{equation}
Likewise, the tangent bundle is a $G_\bR$--homogeneous vector bundle, described as follows.  By \eqref{E:conjgell} and \eqref{E:v=g0}, 
\begin{equation} \nonumber 
  \fv^\perp \ = \ (\fg_- \op \fg_+) \,\cap\, \fg_\bR
\end{equation}
is a real form of $\fg_- \op \fg_+$.  In particular, 
$$
  \fg_\bR \ = \ \fv \ \op \ \fv^\perp
$$
is a $V$--module decomposition.  So the tangent space $T_oD$ is naturally identified with $\fg_\bR/\fv = \fv^\perp$, as a $V$--module.  Moreover, the (real) tangent bundle $TD$ is the $G_\bR$--homogeneous bundle
$$
  TD \ = \ G_\bR \times_V \fv^\perp \,.
$$
Given $\ell > 0$, \eqref{E:conjgell} implies the subspace 
\begin{equation} \nonumber 
  \fv^\perp_\ell \ = \ (\fg_\ell \op \fg_{-\ell}) \,\cap\, \fg_\bR
\end{equation}
is a real form of $\fg_\ell \op \fg_{-\ell}$.  Additionally, \eqref{E:v=g0} and \eqref{E:gell} imply that $\fv^\perp_\ell$ is a $V$--module.   So, for $\ell > 0$, we may define homogeneous sub-bundles 
$$
  T_\ell\ = \ G_\bR \times_V \fv^\perp_\ell \,.
$$
Note that $TD = \op_\ell \,T_\ell$.  The complexified tangent bundle is the $G_\bR$--homogeneous bundle
$$
  T_\bC D \ = \ G_\bR \times_V (\fg_- \op \fg_+) \,.
$$
We have 
\begin{equation} \label{E:TC_decomp}
  T_\bC D \ = \ \bigoplus_{0 < \ell} T_{\ell,\bC} \,,
\end{equation} 
where $T_{\ell,\bC} = G_\bR \times_V (\fg_{-\ell} \op \fg_{\ell})$ is the complexification of $T_\ell$.  

The complexified cotangent bundle is
$$
  T^*_\bC D \ = \ G_\bR \times_V (\fv^\perp_\bC)^* 
  \ \simeq \ \oplus_\ell\, (T_{\ell,\bC})^*  \,,
$$
Let $\tAnn(\fv^\perp_{1,\bC}) \subset (\fv^\perp_\bC)^*$ denote the annihilator of $\fv^\perp_{1,\bC}$.  Then the annihilator of $T_{1,\bC}$ is
\begin{equation} \label{E:AnnT1}
  \tAnn(T_{1,\bC}) \ = \ G \times_V \tAnn(\fv^\perp_{1,\bC}) \,.
\end{equation}

Let 
$$
  \tw^k_D \ \dfn \ \tw^k T^*_\bC D \ = \ 
  G_\bR \times_V \tw^k (\fv^\perp_\bC)^* 
$$
denote the $k$--th exterior power, so that $\cA^k_D$ is the space of smooth sections of $\tw^k_D$.  Define $G_\bR$--homogeneous bundles
\begin{equation} \label{E:dfn_pq}
  \tw^{p,q}_D \ \dfn \
  G_\bR \times_V \left( \tw^p \fg_-^*)\ot (\tw^q \fg_+^*) \right)
  \ \simeq \ 
  G_\bR \times_V \left( \tw^p \fg_+)\ot (\tw^q \fg_-) \right) \,.
\end{equation}
Note that
\begin{equation}\label{E:cT*}
  \cT^* D \ = \ \tw^{1,0}_D \tand
  \overline{\cT^* D} \ = \ \tw^{0,1}_D \,,
\end{equation}
and
$$
  \tw^k_D \ = \ \bigoplus_{p+q=k} \tw^{p,q}_D
$$
as $V$--modules.  Given an open subset $U \subset D$, let $\cA^{p,q}_U$ denote the smooth, complex--valued sections $U \to \tw^{p,q}_D$; that is, $\cA^{p,q}_U$ is the space of smooth, complex--valued $(p,q)$--forms on $U$.  We have 
\[
  \td \ = \ \partial + \bar\partial
\]
with
\[
  \partial : \cA^{p,q}_U \,\to\, \cA^{p+1,q}_U
  \tand
  \bar\partial : \cA^{p,q}_U \,\to\, \cA^{p,q+1}_U\,.
\]

\subsection{Outline of the proof of Theorem \ref{T:cc}} \label{S:outline}

For the remainder of Section \ref{S:CCD} we simplify notation by writing $\cA$ and $\cI$ for $\cA_D$ and $\cI_D$, respectively.  Recall (Section \ref{S:dfnCC}), that $\cI$ is the differential ideal generated by the smooth sections of \eqref{E:AnnT1}.  In Section \ref{S:cI} we will show that the ideal $\cI$ is the space of sections of a homogeneous sub-bundle $I \subset \tw^\sb_D$.  From the structure of the bundle $I$ we will obtain Theorem \ref{T:cc}, which asserts that the characteristic cohomology may be realized as the cohomology of the total complex $(\cC^\sb,\mathbf{d})$ associated with a double complex $(\cC^{\sb,\sb} , \d,\bar\d)$ of $G_\bR$--invariant differential operators.  The theorem is proved in Sections \ref{S:cI}--\ref{S:Iperp}.  

Before launching into the details of the proof, I will sketch the argument.  First, we show that there exists a $G_0$--submodule $\ffi \subset \tw^\sb(\fv^\perp_{\bC})^*$ such that $\cI$ is the space of smooth sections of the homogeneous subbundle $I = G_\bR \times_V \ffi \subset \tw^\sb_D$, \cf\eqref{E:i}. 

Since $V$ is reductive, there exists a $V$--submodule $\ffi^\perp$ such that $\tw^\sb(\fv^\perp_{\bC})^* = \ffi \op \ffi^\perp$.  Let $\cC \subset \cA$ be the smooth sections of the homogeneous bundle $I^\perp = G_\bR\times_V \ffi^\perp$.  The decomposition $\tw^\sb D = I \op I^\perp$ then yields a natural projection
\begin{equation} \label{E:A-C}
  \wp \,:\, \cA \ \to \ \cC \,,
\end{equation}
and 
\begin{equation} \label{E:A/I=C}
  \cA/\cI \ \simeq \ \cC  \,.
\end{equation}

Second, a detailed description of the $V$--module structure of $\ffi^\perp$ will imply that $\cC$ inherits a bigrading from $\cA^{\sb,\sb}$.  That is,
\begin{equation} \label{E:Cpq}
\renewcommand{\arraystretch}{1.3}
\begin{array}{rcl}
  \cC \ = \  \op\,\cC^k \,, & \hbox{ where } & \cC^k \ = \ \cC \cap \cA^k \,, 
  \quad \hbox{and}\\
  \cC^k \ = \  \op_{p+q=k}\,\cC^{p,q} \,, & \hbox{ with } & 
  \cC^{p,q} \ = \ \cC^k \cap \cA^{p,q} \tand \overline{\cC^{p,q}} \,=\, \cC^{q,p} \,.
\end{array}
\end{equation}
Let
\begin{eqnarray}
  \nonumber
  \mathbf{d} & = & \wp \circ \td : \cC^k \to \cC^{k+1} \,,\\
  \label{E:d}
  \d & = &  \wp \circ \del : \cC^{p,q} \to \cC^{p+1,q} \,,\\
  \nonumber
  \bar\d & = &  \wp \circ \bar\del : \cC^{p,q} \to \cC^{p,q+1} \,.
\end{eqnarray}
Clearly, $\mathbf{d} = \d + \bar\d$.  Additionally, $\td \cI \subset \cI$ implies $0 = \mathbf{d}^2$, so that $0 = \d^2 = \bar\d{}^2 = \d\,\bar\d + \bar\d\,\d$.  Since $\td$, $\del$ and $\bar\del$ are $G_\bR$--invariant differential operators, and the projection $\wp$ is a $G_\bR$--module map, it follows that $(\cC^{\sb,\sb} , \d,\bar\d)$ is a bigraded complex of $G_\bR$--invariant differential operators.  Finally, \eqref{E:A/I=C} identifies the complex $(\cA/\cI,\td)$ defining the characteristic cohomology with the total complex $(\cC^\sb, \bd)$.  Thus,
$$
  H^\sb_\cI(D) \ = \ H^\sb(\cC,\bd) \,.
$$
More generally, $H^\sb_\cI(U) = H^\sb(\cC_U,\bd)$ for any open set $U \subset D$; though the differential operators $\bd, \del,\bar\del$ are no longer $G_\bR$--equivariant when restricted to $U \subsetneq D$ (because $G_\bR$ does not preserve $U$).

We now proceed with the details.  

\subsection{\bmath{The ideal $\cI$ as sections of a homogeneous sub-bundle}} \label{S:cI}

Let $\cI_1 \subset \cA$ be the graded ideal generated by the smooth sections of $\tAnn(T_{1,\bC})$.  Then
$$
  \cI \ = \ \cI_1 \,+\, \td\cI_1 \,.
$$  
Observe that the ideal $\ffi_1 \subset \tw^\sb(\fv^\perp_{\bC})^*$ generated by $\tAnn(\fv^\perp_{1,\bC})$ is a $V$--module.  From \eqref{E:AnnT1} we see that the ideal $\cI_1$ is naturally identified with the smooth sections of 
$$
  I_1 \ \dfn \ G_\bR \times_V \ffi_1 \,.
$$
It remains to account for $\td\cI_1$ modulo $\cI_1$.

\begin{remark}[Conventions] \label{R:conven}
Throughout we will regard $(\fv^\perp_{1,\bC})^*$ as a subspace of $(\fv^\perp_{\bC})^*$ by identifying it with the annihilator of $\op_{\ell\ge2}\fv^\perp_{\ell,\bC}$.  Then, by extension, we will regard 
$$
  \ffi_1^\perp \ \dfn \ \tw^\sb(\fv^\perp_{1,\bC})^*
$$ 
as a subspace of $\tw^\sb(\fv^\perp_{\bC})^*$.  Under this identification 
\begin{equation} \label{E:conven}
  \tw^\sb(\fv^\perp_{\bC})^* \ = \ \ffi_1 \ \op \ \ffi_1^\perp \,
\end{equation}
is a $V$--module decomposition.
\end{remark}

\begin{claim*} There is a $V$--module inclusion $(\fv^\perp_{2,\bC})^* \inj \tw^2(\fv^\perp_{1,\bC})^*$.
\end{claim*}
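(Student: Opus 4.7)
The plan is to construct the claimed inclusion by dualizing a surjection built from the Lie bracket, using bracket--generation \eqref{E:BG}.

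First, I would unpack the two modules in terms of the eigenspace decomposition \eqref{SE:gell}. Since $V$ acts on $\fv^\perp$ through its action on $\fg_\bC$ (and, by \eqref{E:v=g0}, $\fv_\bC = \fg_0$), the subspaces $\fg_\ell \subset \fg_\bC$ are all $V$--modules by \eqref{E:gell}. In particular,
\[
  \fv^\perp_{1,\bC} \,=\, \fg_{-1}\,\op\,\fg_1
  \tand
  \fv^\perp_{2,\bC} \,=\, \fg_{-2}\,\op\,\fg_2
\]
are $V$--modules, and each $(\fv^\perp_{\ell,\bC})^*$ carries the dual $V$--module structure.

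Second, I would invoke bracket--generation \eqref{E:BG} to produce a $V$--equivariant surjection from $\tw^2\fv^\perp_{1,\bC}$ onto $\fv^\perp_{2,\bC}$. Concretely, the Lie bracket restricts to $V$--equivariant maps
\[
  [\,\cdot\,,\,\cdot\,]\,:\,\tw^2 \fg_1 \,\sur\, \fg_2
  \tand
  [\,\cdot\,,\,\cdot\,]\,:\,\tw^2 \fg_{-1} \,\sur\, \fg_{-2}\,,
\]
whose surjectivity is exactly the content of \eqref{E:BG}. Summing, we obtain a $V$--equivariant surjection
\[
  \tw^2\fg_1 \,\op\,\tw^2\fg_{-1} \ \sur\ \fv^\perp_{2,\bC}\,.
\]
Note that the left--hand side is naturally a $V$--submodule of
\[
  \tw^2\bigl(\fg_{-1}\op\fg_1\bigr) \,=\, \tw^2 \fv^\perp_{1,\bC}\,,
\]
the missing summand being the ``mixed'' piece $\fg_1\ot\fg_{-1}$.

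Third, dualizing turns the surjection into an injection of $V$--modules
\[
  (\fv^\perp_{2,\bC})^* \ \inj\ (\tw^2\fg_1)^* \,\op\, (\tw^2\fg_{-1})^*
  \ \inj\ \tw^2(\fv^\perp_{1,\bC})^*\,,
\]
where the last arrow is the transpose of the projection $\tw^2\fv^\perp_{1,\bC}\sur \tw^2\fg_1\op\tw^2\fg_{-1}$. Composing yields the desired $V$--module inclusion.

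There is no real obstacle here; the only subtlety is making sure the identifications between duals of exterior powers and exterior powers of duals (equivalently, the use of \eqref{E:dual} implicitly via the Killing form) are carried out $V$--equivariantly, which they are because $V$ preserves each eigenspace $\fg_\ell$ and the Killing form pairs $\fg_\ell$ with $\fg_{-\ell}$ non--degenerately.
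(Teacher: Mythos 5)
Your proof is correct and is essentially the same as the paper's: the paper constructs the map directly as $\xi \mapsto \xi([\cdot,\cdot])$ and derives injectivity from \eqref{E:BG}, which is precisely the transpose-of-a-surjection argument you present. You make explicit the splitting $\tw^2\fv^\perp_{1,\bC} = \tw^2\fg_1 \op (\fg_1\ot\fg_{-1}) \op \tw^2\fg_{-1}$ that underlies the paper's observation \eqref{E:image}, but the content is the same.
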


\begin{proof}
To see this, let $\xi \in (\fv^\perp_{2,\bC})^* = (\fg_{-2} \op \fg_2)^*$ and $x,y \in \fv^\perp_{1,\bC} = \fg_{-1}\op\fg_1$.  Then $[x,y] \subset \fg_{-2}\op\fg_0\op\fg_2$ by \eqref{E:grLB}.  Thus, $\xi(x,y) = \xi([x,y])$ defines a $V$--module map $(\fv^\perp_{2,\bC})^* \to \tw^2(\fv^\perp_{1,\bC})^*$.  In fact, 
\begin{equation} \label{E:image}
\hbox{the image of $\fg_{\pm2}^*$ under $(\fv^\perp_{2,\bC})^* \to \tw^2(\fv^\perp_{1,\bC})^*$ lies in $\tw^2\fg_{\pm1}^*$.}
\end{equation}
It follows from \eqref{E:BG} that $(\fv^\perp_{2,\bC})^* \to \tw^2(\fv^\perp_{1,\bC})^*$ is injective.  
\end{proof}

Let 
\[
  T' \ \subset \ \tw^2 D
\] 
be the corresponding $G_\bR$--homogeneous sub-bundle.  Let $\cC^\infty( T')$ denote the space of smooth sections.  We will show that
\begin{equation} \label{E:dAnn}
  \td \, \cC^\infty( \tAnn(T_{1,\bC})) \ \equiv \ \cC^\infty( T') \quad \tmod\quad
  \cI_1 \,.
\end{equation}
First we note some consequences of the equation.  Let $\cI' \subset \cA$ be the ideal generated by the smooth sections of $T'$.  Then
\begin{equation} \label{E:genI}
  \cI \ = \ \cI_1 \,+\, \cI' \,.
\end{equation}
Let $\ffi' \subset \tw^\sb(\fv^\perp_{1,\bC})^*$ be the ideal generated by $(\fv^\perp_{2,\bC})^* \inj \tw^2(\fv^\perp_{1,\bC})^*$.  By \eqref{E:conven}
\begin{equation}\label{E:i'}
  \ffi \ \dfn \ \ffi_1 \,\op\, \ffi'
\end{equation}
is a direct sum.  Note also that $\ffi$ is an ideal of $\tw^\sb(\fv^\perp_{\bC})^*$.  Let $I = G\times_V \ffi \subset \tw^\sb D$ be the corresponding homogeneous vector bundle.  
\begin{equation} \label{E:i}
\hbox{\emph{The ideal $\cI$ is the space of smooth sections of $I$.}}
\end{equation}

\begin{proof}[Proof of \eqref{E:dAnn}]
Let $\varphi \in \cC^\infty( \tAnn(T_{1,\bC}))$, and let $X,Y$ be smooth complex vector fields (sections of $T_\bC D$).  Then 
\begin{equation} \label{E:d1}
  d \varphi(X,Y) \ = \ X \varphi(Y) \,-\, Y \varphi(X) \,-\, \varphi([X,Y]) \,.
\end{equation}
Since we are computing $d \varphi$ modulo $\cI_1$, we may assume that $X,Y$ are sections of $T_{1,\bC}$.  Since $\varphi$ annihilates $T_{1,\bC}$, we have $\varphi(X) = \varphi(Y) = 0$.  Moreover, \eqref{E:grLB} and the definition of $T_{\ell,\bC}$ (Section \ref{S:onD}) imply $[X,Y]$ is a section of $T_{1,\bC}\op T_{2,\bC}$.  Let $[X,Y]_2$ denote the component of $[X,Y]$ taking values in $T_{2,\bC}$.  Again, since $\varphi$ annihilates $T_{1,\bC}$, we have $\varphi([X,Y]) = \varphi([X,Y]_2)$.  These observations, along with \eqref{E:d1}, yield
\begin{equation} \label{E:d2}
  d\varphi(X,Y) \ = \ -\varphi([X,Y]_2) \,.
\end{equation}

Note that every element $\psi \in \cC^\infty(T_{\bC}')$ is of the form $\psi(X,Y) = \psi_o([X,Y])$ where $\psi_o \in \cA^1$ is a 1-form annihilating $T_{\ell,\bC}$ for all $\ell\not=2$.  Equation \eqref{E:d2} asserts that $d\varphi = \psi$, modulo $\cC^\infty(\tAnn(T_{1,\bC}))$, where $\psi_o$ is defined by $\left.\psi_o\right|_{T_{2,\bC}} = -\left.\varphi\right|_{T_{2,\bC}}$.  This establishes the containment $\subset$ in \eqref{E:dAnn}.  Conversely, $\psi \equiv -\td \psi_o$ modulo $\cC^\infty(\tAnn(T_{1,\bC}))$.  This establishes \eqref{E:dAnn}.
\end{proof}

\subsection{\bmath{The complimentary sub-module $\ffi^\perp \subset \tw^\sb (\fg_-\op\fg_+)^*$}}

Since $G_0 = V_\bC$ is reductive and $\ffi \subset \tw^\sb (\fv^\perp_{\bC})^*$ is a $V$--submodule, there exists a $V$--module $\ffi^\perp$ such that 
\begin{equation} \label{E:i+iperp}
    \ffi \ \op \ \ffi^\perp \ = \ \tw^\sb (\fv^\perp_{\bC})^* \,.
\end{equation}
Assertions \eqref{E:A-C} and \eqref{E:A/I=C} of the outline (Section \ref{S:outline}) now follow.  The second step towards Theorem \ref{T:cc} is to identify the complement $\ffi^\perp$.  From \eqref{E:conven} and \eqref{E:i'} we see that $\ffi^\perp \subset \ffi_1^\perp = \tw^\sb(\fv^\perp_{1,\bC})^*$, and 
$$
  \ffi' \ \op\ \ffi^\perp \ = \ \tw^\sb (\fv^\perp_{1,\bC})^* \,.
$$

By \eqref{E:image}, $\fg_{-2}^* \inj \tw^2\fg_{-1}^*$.  Let $\fj \subset \tw^\sb \fg_{-1}^*$ denote the ideal generated by $\fg_{-2}^* \subset \tw^2\fg_{-1}^*$.  Note that $\fj$ is a homogeneous graded ideal; precisely, $\fj = \op\,\fj^\ell$ where $\fj^\ell = \fj \cap \tw^\ell\fg_{-1}^*$.  Equation \eqref{E:conjgell} implies that the conjugate $\overline{\fj} \subset \tw^\sb\fg_{1}^*$ is the ideal generated by $\fg_{2}^* \subset \tw^2\fg_{1}^*$.  Note that both $\fj$ and $\overline\fj$ are $V$--modules.  Moreover, \eqref{E:image} implies that the homogeneous component $(\ffi')^k$ of $\ffi'$ in 
$$
  \tw^k(\fv^\perp_{1,\bC})^* \ \simeq \ \bigoplus_{p+q=k}
  \big( \tw^p \fg_{-1}^* \big) \,\ot\, \big( \tw^q\fg_1^* \big) \,.
$$
is 
$$
  (\ffi')^k \ \simeq \ \sum_{p+q=k} 
  \big( \fj^p \,\ot\, \tw^q\fg_1^* \big) \,+\,
  \big( \tw^p \fg_{-1}^* \,\ot\, \overline{\fj}{}^q \big) \,.
$$
(The latter is not a direct sum, as the distinct summands may have nontrivial intersections.)  In particular, $\ffi' \simeq ( \fj \ot  \tw^\sb\fg_1^*) +  (\tw^\sb \fg_{-1}^* \ot \overline{\fj})$.  Therefore, if $\fj^\perp \subset \tw^\sb \fg_{-1}^*$ is a $V$--module complement to $\fj$, then    
\begin{equation} \label{E:iperp}
  \ffi^\perp \ = \ \fj^\perp \,\ot\, \overline{\fj^\perp} \,.
\end{equation}
The submodule $\fj^\perp$ is identified in \cite{ MR3217458} using Kostant's theorem on Lie algebra cohomology.  

\subsection{Lie algebra cohomology} \label{S:lac}

Lie algebra cohomology was introduced by Chevalley and Eilenberg \cite{MR0024908}.  Given a Lie algebra $\fa$ defined over $\bC$ define $\e : \tw^\ell \fa^* \to \tw^{\ell+1}\fa^*$ by 
\begin{equation}\label{E:lac_d}
  (\e\phi)(A_0 , \ldots , A_k) \ \dfn \ 
  \sum_{i<j}  (-1)^{i+j} \phi\left( [ A_i , A_j ] , 
  A_0 , \ldots , \hat A_i , \ldots , \hat A_j , \ldots , A_\ell \right)
\end{equation}
for any $\phi \in \tw^\ell \fa^*$ and $(\ell+1)$--tuple $A_0 , \ldots , A_\ell \in \fa$.  It is straightforward to confirm that $\e^2 = 0$.  Let
\begin{equation} \label{E:lac}
  H^\ell(\fa , \bC) \ = \ 
  \frac{\tker\{ \e : \tw^\ell \fa^* \to \tw^{\ell+1}\fa^* \}}
       {\tim\{ \e : \tw^{\ell-1} \fa^* \to \tw^\ell\fa^*  \}}
\end{equation}
denote the corresponding \emph{Lie algebra cohomology} (with coefficients in the trivial representation).

If $\fa = \fg_\pm$, then $\e$ is a $G_0$--module map, and $H^\sb(\fg_\pm,\bC)$ is a $G_0$--module.  Since $\ttE \in \fg_0$ is semisimple, it follows that the cohomology decomposes into $\ttE$--eigenspaces.  From the definition \eqref{E:lac}, we see that the $\ttE$--eigenvalues of $H^\ell(\fg_-,\bC)$ are integers $\ge \ell$; that is,
\begin{equation}\label{E:Hev}
  H^\ell(\fg_-,\bC) \ = \ 
  H^\ell_\ell\,\op\,H^\ell_{\ell+1}\,\op\,H^\ell_{\ell+2}\,\op\cdots
\end{equation}
where $H^\ell_m \subset H^\ell(\fg_-,\bC)$ is the $\ttE$--eigenspace with $\ttE$--eigenvalue $m$.\footnote{Examples of the eigenspace decomposition \eqref{E:Hev} are given in Appendix \ref{S:egs}.}  In \cite[\S4.2]{ MR3217458} it is shown that $H^\ell_\ell$ is the $V$--module complement to $\fj^\ell$ in $\tw^\ell \fg_{-1}^*$, and 
\begin{equation} \label{E:jperp}
  \fj^\perp \ = \ \bigoplus_{\ell\ge0} H_\ell^\ell \,.
\end{equation} 

Before continuing with the proof of Theorem \ref{T:cc}, we make two observations that will be useful later.  First, \eqref{E:conjgell} and \eqref{E:dual} imply that 
\begin{equation} \label{E:Hconj}
  H^\sb(\fg_+,\bC) \ = \ \overline{H^\sb(\fg_-,\bC)} \ = \ H^\sb(\fg_-,\bC)^*
\end{equation}
and the $\ttE$--eigenvalues of $H^\ell(\fg_+,\bC)$ are $-\ell, -\ell-1, -\ell-2, \ldots$  Second, \begin{equation} \label{E:H1}
  H^1(\fg_-,\bC) \ = \ H^1_1 \,;
\end{equation}
equivalently, $H^1_m = 0$ if $m>1$.  This is a consequence of Kostant's description \cite[Theorem 5.14]{MR0142696} of the $G_0$--module structure of $H^\sb(\fg_-,\bC)$.  Given $i \in I$, let $H_{(i)}$ be the irreducible $G_0$--module of highest weight $\s_i$.  Then Kostant's theorem asserts that
$$
  H^1(\fg_-,\bC) \ = \ \bigoplus_{i \in I} H_{(i)} \,.
$$
Since $H_{(i)}$ is irreducible, and $\ttE$ lies it the center of the reductive $\fg_0$, $\ttE$ necessarily acts by a scalar, which must be $\s_i(\ttE) = 1$ by \eqref{E:ttE}.  Thus \eqref{E:H1} holds.

\subsection{\bmath{The complimentary sub-bundle $I^\perp \subset \tw^\sb D$}} \label{S:Iperp}

Equations \eqref{E:iperp} and \eqref{E:jperp} yield
\begin{equation} \label{E:iperp1}
  \ffi^\perp \ = \ \op\, \ffi^\perp_k \quad\hbox{with}\quad
  \ffi^\perp_k \ = \ \bigoplus_{p+q=k} \, H_p^p \,\ot\, \overline{H_q^q} \,.
\end{equation}
Define $G_\bR$--homogeneous holomorphic vector bundles 
\begin{equation} \label{E:dfn_sH}
\renewcommand{\arraystretch}{1.5}
\begin{array}{rcl}
  \sH^\ell_m & \dfn & G_\bR \times_V H_m^\ell \,,\\
  \sH^\ell & \dfn & G_\bR \times_V H^\ell(\fg_-,\bC) \ = \ 
  \sH^\ell_\ell \,\op\,\sH^\ell_{\ell+1}\,\op\,\sH^\ell_{\ell+2}\,\op\cdots \,.
\end{array}
\end{equation}
By \eqref{E:Hconj}
$$
  \overline{\sH^\ell} \ \simeq \ G_\bR \times_V H^\ell(\fg_+,\bC) \,.
$$
Set
$$
  I^{\perp}_k \ = \ \displaystyle
  \bigoplus_{p+q = k} \sH^p_p \ot \overline{\sH^q_q} 
  \tand
  I^\perp \ = \ \displaystyle \bigoplus_k I^{\perp}_k \,,
$$
and let
\begin{equation} \label{E:dfn_cC}
  \cC \ \dfn \ \cC^\infty(I^\perp) \,,\quad
  \cC^k \ \dfn \ \cC^\infty(I^\perp_k) \tand
  \cC^{p,q} \ \dfn \ \cC^\infty(\sH^p_p\ot\overline{\sH^q_q})
\end{equation}
denote the smooth sections.   Equation \eqref{E:iperp1} yields \eqref{E:Cpq}.  The remainder of the Section \ref{S:outline} outline follows, and we have established

\begin{theorem} \label{T:cc}
The characteristic cohomology $H^\sb_\cI(D)$ of the infinitesimal period relation is the cohomology $H^\sb(\cC,\mathbf{d})$ of the total complex associated with the double complex $(\cC^{\sb,\sb},\d,\bar\d)$ of $G_\bR$--invariant differential operators.
\end{theorem}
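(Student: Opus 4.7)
The plan is to follow the outline in Section \ref{S:outline} verbatim, filling in the algebraic details. First I would verify that the full differential ideal $\cI$ is the space of smooth sections of a homogeneous subbundle $I = G_\bR \times_V \ffi \subset \tw^\sb_D$. Since $\cI = \cI_1 + \td\cI_1$, and $\cI_1$ is tautologically the sections of $I_1 = G_\bR \times_V \ffi_1$, the only substantive step is to compute $\td\cI_1 \pmod{\cI_1}$. Here I would use the Cartan formula for $\td\varphi(X,Y)$ with $\varphi \in \cC^\infty(\tAnn(T_{1,\bC}))$ and $X,Y$ sections of $T_{1,\bC}$: the first two terms vanish because $\varphi$ annihilates $T_{1,\bC}$, and by \eqref{E:grLB} the commutator $[X,Y]$ has only a $T_{1,\bC} \oplus T_{2,\bC}$ component, so $\td\varphi(X,Y) = -\varphi([X,Y]_2)$. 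Dually, this yields the $V$-module embedding $(\fv^\perp_{2,\bC})^* \inj \tw^2(\fv^\perp_{1,\bC})^*$ (injective by the bracket-generation identity \eqref{E:BG}), and hence $\ffi = \ffi_1 \oplus \ffi'$ with $\ffi'$ the ideal in $\tw^\sb(\fv^\perp_{1,\bC})^*$ generated by this image.

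Second, because $V$ is reductive, pick a $V$-module complement $\ffi^\perp$. To identify it concretely I would use the tensor factorization $\tw^\sb(\fv^\perp_{1,\bC})^* \simeq \tw^\sb\fg_{-1}^* \ot \tw^\sb\fg_1^*$, which together with \eqref{E:image} forces $\ffi^\perp = \fj^\perp \ot \overline{\fj^\perp}$, where $\fj \subset \tw^\sb\fg_{-1}^*$ is the ideal generated by the image of $\fg_{-2}^*$. Invoking Kostant's theorem as in \cite{MR3217458}, the complement $\fj^\perp$ is the sum $\bigoplus_\ell H_\ell^\ell$ of the $\ttE$-extremal Kostant cohomology components of $\fg_-$. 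This gives the explicit description
\[
   \ffi^\perp \ = \ \bigoplus_{p,q\ge0} H^p_p \,\ot\, \overline{H^q_q} \,,
\]
from which the bigrading \eqref{E:Cpq} of $\cC = \cC^\infty(I^\perp)$ with $I^\perp = G_\bR \times_V \ffi^\perp$ is immediate.

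Third, the decomposition $\tw^\sb_D = I \oplus I^\perp$ at the bundle level produces a $G_\bR$-equivariant, $\cC^\infty$-linear projection $\wp : \cA \to \cC$ of complexes that induces the identification $\cA/\cI \simeq \cC$. The operators $\bd = \wp \circ \td$, $\d = \wp \circ \del$, $\bar\d = \wp \circ \bar\del$ then act on $\cC$; the differential closure $\td\cI \subset \cI$ forces $\bd^2 = 0$ and similarly $\d^2 = \bar\d{}^2 = \d\bar\d + \bar\d\d = 0$, while $\bd = \d + \bar\d$ follows from $\td = \del + \bar\del$. The bidegree shifts $\d : \cC^{p,q} \to \cC^{p+1,q}$ and $\bar\d : \cC^{p,q} \to \cC^{p,q+1}$ are inherited from $\del$ and $\bar\del$ because $\wp$ commutes with the $V$-module bigrading on $\ffi^\perp$. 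Finally, $G_\bR$-invariance of $\bd, \d, \bar\d$ is automatic: $\td, \del, \bar\del$ are $G_\bR$-invariant and $\wp$, being induced by a $G_\bR$-homogeneous bundle splitting, is a $G_\bR$-module map. The conclusion $H^\sb_\cI(D) = H^\sb(\cC,\bd)$ then follows from $\cA/\cI \simeq \cC$ as complexes.

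The main obstacle is step two: pinning down $\ffi^\perp$ explicitly. The inclusion $(\fv^\perp_{2,\bC})^* \inj \tw^2(\fv^\perp_{1,\bC})^*$ with image respecting the $\fg_{\pm 1}$ split, together with the reduction to Kostant's extremal cohomology $H^\ell_\ell$, is the heart of the argument; once \eqref{E:iperp} and \eqref{E:jperp} are in hand, the remaining assertions are formal. The verification of \eqref{E:dAnn} is a routine Cartan-formula computation, but it is the step that links the abstract algebraic ideal $\ffi$ to the analytic ideal $\cI$ and thus cannot be skipped.
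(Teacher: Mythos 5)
Your proposal is correct and follows essentially the same route as the paper: it is a faithful expansion of the outline in Section \ref{S:outline}, with the Cartan-formula computation establishing \eqref{E:dAnn} exactly as in Section \ref{S:cI}, the reduction of $\ffi^\perp$ to $\fj^\perp\ot\overline{\fj^\perp}$ as in Section 5.4, and the appeal to Kostant's theorem (via \cite{MR3217458}) to identify $\fj^\perp = \bigoplus_\ell H^\ell_\ell$ as in Sections \ref{S:lac}--\ref{S:Iperp}. No material differences.
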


\begin{remark}
Likewise, $H^\sb_\cI(U) = H^\sb(\cC_U,\bd)$ for any open subset $U \subset D$; however, the operators $\bd,\del,\bar\d$ are no longer $G_\bR$--invariant if $U \subsetneq D$.
\end{remark}

Define
\begin{equation} \label{E:dfn_s}
  \m \ \dfn \ \tmax \{ p \ | \ H^p_p \not= 0 \} \,.
\end{equation}
The double complex of Theorem \ref{T:cc} is as displayed in Figure \ref{f:d_cpx}.  The integer $\m$ is identified in the examples of Appendix \ref{S:egs}.
\begin{small}
\begin{figure}[h] 
\caption[The double complex]{The double complex of Theorem \ref{T:cc}.}
$$ 
\begin{array}{cccccccccc}
  0 & 
  & 0 &
  & 0 &
  & & & 0 & \\
  \uparrow\hbox{\scriptsize{$\bar\d$}} & 
  & \uparrow\hbox{\scriptsize{$\bar\d$}} &
  & \uparrow\hbox{\scriptsize{$\bar\d$}} &
  & & & \uparrow\hbox{\scriptsize{$\bar\d$}} & \\
  \cC^{0,\mu} & \stackrel{\d}{\longrightarrow} &
  \cC^{1,\mu} & \stackrel{\d}{\longrightarrow} & 
  \cC^{2,\mu} & \stackrel{\d}{\longrightarrow} & 
  \cdots & \stackrel{\d}{\longrightarrow} 
  & \cC^{\mu,\mu} & \stackrel{\d}{\longrightarrow} \ 0 \\
  \uparrow\hbox{\scriptsize{$\bar\d$}} & 
  & \uparrow\hbox{\scriptsize{$\bar\d$}} &
  & \uparrow\hbox{\scriptsize{$\bar\d$}} &
  & & & \uparrow\hbox{\scriptsize{$\bar\d$}} & \\
  \vdots & & \vdots & & \vdots & & & & \vdots & \\
  \uparrow\hbox{\scriptsize{$\bar\d$}} & 
  & \uparrow\hbox{\scriptsize{$\bar\d$}} &
  & \uparrow\hbox{\scriptsize{$\bar\d$}} &
  & & & \uparrow\hbox{\scriptsize{$\bar\d$}} & \\
  \cC^{0,2} & \stackrel{\d}{\longrightarrow} &
  \cC^{1,2} & \stackrel{\d}{\longrightarrow} & 
  \cC^{2,2} & \stackrel{\d}{\longrightarrow} & 
  \cdots & \stackrel{\d}{\longrightarrow} 
  & \cC^{\mu,2} & \stackrel{\d}{\longrightarrow} \ 0 \\
  \uparrow\hbox{\scriptsize{$\bar\d$}} & 
  & \uparrow\hbox{\scriptsize{$\bar\d$}} &
  & \uparrow\hbox{\scriptsize{$\bar\d$}} &
  & & & \uparrow\hbox{\scriptsize{$\bar\d$}} & \\
  \cC^{0,1} & \stackrel{\d}{\longrightarrow} &
  \cC^{1,1} & \stackrel{\d}{\longrightarrow} & 
  \cC^{2,1} & \stackrel{\d}{\longrightarrow} & 
  \cdots & \stackrel{\d}{\longrightarrow} 
  & \cC^{\mu,1} & \stackrel{\d}{\longrightarrow} \ 0 \\
  \uparrow\hbox{\scriptsize{$\bar\d$}} & 
  & \uparrow\hbox{\scriptsize{$\bar\d$}} &
  & \uparrow\hbox{\scriptsize{$\bar\d$}} &
  & & & \uparrow\hbox{\scriptsize{$\bar\d$}} & \\
  \cC^{0,0} & \stackrel{\d}{\longrightarrow} &
  \cC^{1,0} & \stackrel{\d}{\longrightarrow} & 
  \cC^{2,0} & \stackrel{\d}{\longrightarrow} & 
  \cdots & \stackrel{\d}{\longrightarrow} 
  & \cC^{\mu,0} & \stackrel{\d}{\longrightarrow} \ 0
\end{array}
$$
\label{f:d_cpx}
\end{figure}
\end{small}

\begin{remark}
By \cite[Theorem 3.12]{ MR3217458}, any variation of Hodge structure has dimension at most $\mu$.
\end{remark}

\section{Comparison of de Rham and characteristic cohomology} \label{S:cvc}

Define
\begin{equation} \label{E:dfn_lo}
  \nu \ \dfn \ \tmax\{ \ell \ | \ H^\ell_m = 0 \ \forall \ m > \ell \} \,.
\end{equation}
The main result of this section is Theorem \ref{T:hh} and its corollary \eqref{E:cvc} which establishes (i) the finite dimensionality of the characteristic cohomology in degree $k < \n$ (Corollary \ref{C:FD}), and (ii) a local Poincar\'e lemma for the characteristic cohomology differential (Corollary \ref{C:PL3}).

By \eqref{E:H1}
\begin{equation} \nonumber
  \nu \ > \ 0 \,,
\end{equation}
and \eqref{E:Hev} and \eqref{E:dfn_sH} yield
\[
  \sH^\ell \ = \ \sH^\ell_\ell \quad \hbox{for all } \ \ell \le \nu \,.
\]
The value $\nu$ is determined in the examples of Appendix \ref{S:egs}.

By \eqref{E:dfn_cC}, $\cC^{p,0}$ is the space of smooth sections of $\sH^p_p$.  Note that the differential $\d$ preserves holomorphic sections, yielding a complex
\begin{equation}\label{E:cpxll}
  0 \ \to \ \cO(\sH^0_0) \ \stackrel{\d}{\to} \
  \cO(\sH^1_1) \ \stackrel{\d}{\to} \
  \cO(\sH^2_2) \ \stackrel{\d}{\to} \cdots \stackrel{\d}{\to} \
  \cO(\sH^s_s) \ \to \ 0\,.
\end{equation} 
Given an open subset $U \subset D$, let $\bH^\sb(U,\sH^*_*)$ denote the hypercohomology of the complex \eqref{E:cpxll}.  (See \cite[\S3.5]{MR1288523} for a discussion of hypercohomology.)  

\begin{theorem} \label{T:hh}
Let $U \subset D$ be an open set.
{\bf (a)}
There exist identifications
$$ 
   H^k(U,\bC) \ = \  \bH^k(U,\sH^\ast_\ast) \quad \hbox{for all } \ k < \nu \,. 
$$
{\bf (b)}
There exists an inclusion
$$ 
  H^{\nu}(U,\bC) \ \inj \ \bH^{\nu}(U,\sH^\ast_\ast)\,.
$$
The cokernel of the inclusion admits an identification 
$$
  \bH^{\nu}(U,\sH^\ast_\ast)/H^{\nu}(U,\bC) \ = \ 
  \tker\{ d^\ddagger_{\nu+1} : 
  H^0(U,\cH^{\nu}) \to H^{\nu+1}(U,\bC) \} \,.
$$

\smallskip

\noindent{\bf (c)}
There exist filtrations ${}^\dagger{}F^\sb \bH^\sb(U,\sH^\ast_\ast)$ and $F^\sb H^\sb_\cI(U)$ of the hypercohomology and characteristic cohomology, respectively, such that the associated graded decompositions satisfy the following.  There exist identifications 
$$  
  {}^\dagger\tGr^\sb\bH^k(U,\sH^\ast_\ast) \ = \ 
  \tGr^\sb{}H^k_\cI(U) \quad \hbox{for all } \ k < \nu \,.
$$
For $k=\nu$ we have 
\[ 
\renewcommand{\arraystretch}{1.3}
\begin{array}{rcl}
  {}^\dagger\tGr^p\bH^{\nu}(U,\sH^\ast_\ast) & = & 
  \tGr^pH^{\nu}_\cI(U) \quad \hbox{for all } \ p \not=0 \,,\\
  {}^\dagger\tGr^0\bH^{\nu}(U,\sH^\ast_\ast) & \inj &
  \tGr^0{}H^{\nu}_\cI(U)\,.
\end{array}
\]
\smallskip

\noindent{\bf (d)}  In the case that $U = D$, each of the identifications, inclusions and filtrations above are as $G_\bR$--modules, and the map $d^\ddagger_{\nu+1}$ is $G_\bR$--equivariant.
\end{theorem}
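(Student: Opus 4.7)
The plan is to interpose the hypercohomology $\bH^\bullet(U, \sH^*_*)$ between the de Rham and characteristic cohomologies, then compare each pair using the $\ttE$-eigenspace structure from Kostant's theorem.

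For parts (a), (b), and (d), I would construct a $G_\bR$--invariant BGG-type complex $(\sH^\bullet, D)$ of differential operators whose hypercohomology computes the de Rham cohomology $H^\bullet(U, \bC)$, following the \Cap--\Slovak--\Soucek{} formalism adapted to the $V$-parabolic data here. Here $D$ is an eigenvalue-raising perturbation whose leading, weight-preserving component $\sH^\ell_\ell \to \sH^{\ell+1}_{\ell+1}$ is exactly the differential $\d$ of $(\sH^*_*, \d)$. Filtering $(\sH^\bullet, D)$ by $\ttE$-weight excess gives an $E_0$-page that reproduces $(\sH^*_*, \d)$ together with trailing higher-weight pieces that, by \eqref{E:dfn_lo}, first appear in degree $\nu + 1$. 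The induced spectral sequence converges to $H^\bullet(U, \bC)$ with $E_1 = \bH^\bullet(U, \sH^*_*)$; it collapses at $E_1$ in degrees $k < \nu$, producing (a), and at $k = \nu$ the first non-trivial higher differential is precisely $d^\ddagger_{\nu+1}$, whose kernel is identified with the cokernel in (b).

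For part (c), Theorem \ref{T:cc} realises $H^\bullet_\cI(U)$ as the total cohomology of the double complex $(\cC^{\bullet,\bullet}, \d, \bar\d)$. I would examine the column spectral sequence (compute $\bar\d$-cohomology first). The crucial claim is that each column $(\cC^{p,\bullet}, \bar\d)$ is a fine resolution of the holomorphic sheaf $\cO(\sH^p_p)$, yielding $E_1^{p,q} = H^q(U, \sH^p_p)$. This matches the $E_1$-page of the standard hypercohomology spectral sequence for $\bH^\bullet(U, \sH^*_*)$, with both $d_1$ differentials induced by $\d$. Consequently the column filtration of $H^\bullet_\cI(U)$ and the hypercohomology filtration of $\bH^\bullet(U, \sH^*_*)$ have matching associated graded pieces in the stated range, and the mismatch at $({}^\dagger\tGr^0, \tGr^0)$ in degree $\nu$ reflects the contribution of $\sH^{\nu+1}_m$ with $m > \nu+1$ to $H^\nu_\cI(U)$ that does not lie in the image from hypercohomology.

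The principal obstacle is verifying that $(\cC^{p,\bullet}, \bar\d)$ resolves $\cO(\sH^p_p)$. A naive Dolbeault argument does not apply directly, because $\cC^{p,q} = \cC^\infty(\sH^p_p \otimes \overline{\sH^q_q})$ involves only the antiholomorphic Kostant-harmonic subspace $\overline{\sH^q_q}$ rather than all of $\Omega^{0,q}$. I would address this via the conjugate form of Kostant's theorem applied to $\fg_+$ (using \eqref{E:Hconj}), combined with a local $\bar\partial$-Poincar\'e argument and a chain-homotopy identifying $\bar\d$-cohomology on each row with the ordinary Dolbeault cohomology of $\cO(\sH^p_p)$. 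Part (d) is then immediate from the $G_\bR$-invariance of the operators $\d$, $\bar\d$, $\bd$, and $D$ when $U$ is the entire flag domain.
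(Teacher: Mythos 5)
Your proposal breaks down at two of its load-bearing claims, and the failure of each is precisely what produces the ``$<\nu$'' and ``inclusion at $\nu$'' phenomena in the statement you are trying to prove.

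\textbf{The resolution claim in part (c) is false.} You assert that each column $(\cC^{p,\bullet},\bar\d)$ is a fine resolution of $\cO(\sH^p_p)$, and you propose to establish this via the conjugate Kostant decomposition and a chain homotopy against the Dolbeault resolution. This cannot work: the column is a resolution of $\cO(\sH^p_p)$ only through degree $\nu-1$. Concretely, the $\bar\d$--Poincar\'e lemma (Corollary \ref{C:PL1a}) is proved only for $0 < q < \nu$, and Theorem \ref{T:E1}(b) shows that at $q=\nu$ the natural map $H^\nu(U,\sH^p_p) \to H^\nu(\cC^{p,\bullet}_U,\bar\d)$ is merely an injection, with image cut out by the successive maps $\bar\del_i$ landing in the sections of $\sH^p_p \ot \overline{\sH^{\nu+1}_{\nu+j}}$ for $j\ge 2$. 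Those bundles are nonzero as soon as $H^{\nu+1}(\fg_-,\bC)$ has an $\ttE$--eigenvalue exceeding $\nu+1$, which is exactly what the definition \eqref{E:dfn_lo} of $\nu$ permits. If your resolution claim were true, the conclusion of Theorem \ref{T:hh}(c) would be a clean isomorphism in all degrees $\le\nu$, not the one-sided inclusion ${}^\dagger\tGr^0\bH^\nu \inj \tGr^0 H^\nu_\cI$. What you actually need here is Theorem \ref{T:E1} as stated (equality for $q<\nu$, inclusion at $q=\nu$), fed into a term-by-term comparison of the double-complex spectral sequence $E$ with the first hypercohomology spectral sequence ${}^\dagger E$; the mismatch at $(p,q)=(0,\nu)$ then propagates through $E_\infty$.

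\textbf{The BGG complex $(\sH^\bullet,D)$ you posit for parts (a), (b), (d) does not exist in general.} Remark \ref{R:BGGres1} addresses exactly this point: the weight-raising perturbation of $\bar\d$ assembling into a resolution of the form \eqref{E:BGGres} exists only ``under suitable conditions on the set $\{(p,\ell) \mid H^p_\ell \ne 0\}$'' --- it is shown to exist in the contact/adjoint and $G_2$ examples of Appendix \ref{S:egs}, but no such operator is constructed in general, and the layout of eigenvalues in, say, Figures \ref{f:A4P12}--\ref{f:A4P23} shows that more than one weight can occupy a given cohomological degree, obstructing a single-row complex. Moreover, even if such a complex existed, filtering it by weight excess would not give $\bH^\bullet(U,\sH^\ast_\ast)$ as an $E_1$ page in the way you describe; the role of that hypercohomology is better played by the second hypercohomology spectral sequence. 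The mechanism the paper actually uses for (a) and (b) is elementary once the holomorphic $\d$--Poincar\'e lemma (Corollary \ref{C:PL2}, itself a consequence of Theorem \ref{T:E2}) is in hand: the cohomology sheaves $\cH^p$ of \eqref{E:cpxll} satisfy $\cH^0 = \bC$ and $\cH^p = 0$ for $0 < p < \nu$, so the second hypercohomology spectral sequence ${}^\ddagger E_2^{p,q} = H^q(U,\cH^p)$ degenerates in low total degree, giving $\bH^k(U,\sH^\ast_\ast) = H^k(U,\bC)$ for $k<\nu$ and the stated inclusion with cokernel $\tker\,d^\ddagger_{\nu+1}$ at $k=\nu$. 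No BGG machinery is required, and the argument remains valid for arbitrary open $U$.

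Your part (d) reasoning --- $G_\bR$--equivariance of $\d,\bar\d,\bd$ on all of $D$ --- is fine, but only once (a)--(c) are established by arguments that are themselves equivariant; the paper achieves this automatically because the filtrations and spectral sequences are built from $G_\bR$--homogeneous bundles and invariant operators.
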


\noindent The theorem is proved in Section \ref{S:prfhh}.  A discussion of the inclusion ${}^\dagger\tGr^0\bH^{\nu}(U,\sH^\ast_\ast) \inj \tGr^0{}H^{\nu}_\cI(U)$ in Theorem \ref{T:hh}(c) is given in Remark \ref{R:hh}. Together (a) and (c) of Theorem \ref{T:hh} yield (graded) identifications 
\begin{equation} \label{E:cvc}
  H^k(U,\bC) \ \simeq \ H^k_\cI(U) \quad \hbox{for} \quad k < \nu\,.
\end{equation}
This implies two corollaries.  First, 

\begin{corollary}[Finite--dimensionality] \label{C:FD}
The characteristic cohomology $H^k_\cI(D)$ is finite--dimensional for $k < \nu$, and zero when $k < \nu$ is odd.
\end{corollary}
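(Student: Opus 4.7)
The plan is to combine the identification \eqref{E:cvc} with standard topology of flag domains. First, I would apply \eqref{E:cvc} with $U = D$, obtaining $H^k_\cI(D) \simeq H^k(D, \bC)$ for every $k < \nu$. This reduces the corollary to the finite--dimensionality and odd--degree vanishing of the ordinary de Rham cohomology of $D$ in the range $k < \nu$ (indeed, these will hold for all $k$).

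Second, the Mumford--Tate hypothesis implies that $V$ is compact, so I may fix a maximal compact subgroup $K \subset G_\bR$ with $V \subset K$. The Cartan decomposition makes $G_\bR/K$ diffeomorphic to a Euclidean space, so the fibration $K/V \hookrightarrow G_\bR/V \twoheadrightarrow G_\bR/K$ exhibits a homotopy equivalence $D \simeq K/V$. Since $V = Z_{G_\bR}(T')$ with $T' \subset V \subset K$, one has $V = Z_K(T')$, so $V$ is the centralizer of a torus in the compact group $K$. By the classical theorem of Wang, the resulting $K/V$ is a compact homogeneous K\"ahler manifold diffeomorphic to a complex flag variety $K_\bC/Q$ for a parabolic $Q \subset K_\bC$, and hence admits a Bruhat decomposition into cells of even real dimension. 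Consequently $H^\sb(K/V, \bC)$ is finite--dimensional and concentrated in even degrees, and the corollary follows.

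I do not expect any genuine obstacle: the substantive work is already contained in Theorem \ref{T:hh}, and the corollary is an immediate consequence of \eqref{E:cvc} together with well--known topology of flag domains. The only points requiring mild care are the inclusion $V \subset K$ (which uses compactness of $V$, a consequence of the Mumford--Tate hypothesis noted in the paper's discussion of flag domains) and the identification of $K/V$ with a complex flag manifold via Wang's theorem.
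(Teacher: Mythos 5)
Your proof is correct and takes essentially the same route as the paper: reduce via \eqref{E:cvc} to the ordinary cohomology of $D$, which is finite--dimensional and concentrated in even degrees because $D$ has the homotopy type of the compact complex flag manifold $K/V$. Where the paper simply cites \cite[Proposition 4.3.5]{MR2188135}, you have spelled out the underlying topology (the fibration over the contractible $G_\bR/K$ and the Bruhat decomposition of $K/V$); this is exactly the content the citation delegates, so there is no real difference in method, only in the level of detail.
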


\begin{proof}
This follows from the identification \eqref{E:cvc} and \cite[Proposition 4.3.5]{MR2188135}.
\end{proof}

\noindent Second, from \eqref{E:cvc} and the local exactness of the de Rham complex we obtain

\begin{corollary}[$\mathbf{d}$--Poincar\'e lemma] \label{C:PL3}
The operator $\mathbf{d} : \cC^k \to \cC^{k+1}$ is locally exact for $0 < k < \nu$.  That is, if $\phi \in \cC^k$ is $\mathbf{d}$--closed, then locally there exists $\psi \in \cC^{k-1}$ such that $\mathbf{d}\psi = \phi$.
\end{corollary}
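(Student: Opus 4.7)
The plan is to derive the corollary immediately from the identification \eqref{E:cvc} by applying it on a small contractible neighborhood. Fix an arbitrary point $x \in D$, and let me choose a contractible open neighborhood $U \ni x$, for instance a coordinate ball inside a holomorphic chart at $x$. By the remark immediately following Theorem \ref{T:cc}, the characteristic cohomology on $U$ is computed by the same total complex: $H^k_\cI(U) = H^k(\cC_U, \bd)$. So proving local solvability of $\bd \psi = \phi$ for $\bd$--closed $\phi \in \cC^k_U$ (in the stated range) is precisely the statement that $H^k(\cC_U,\bd) = 0$.

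Next I would invoke Theorem \ref{T:hh}(a) together with part (c), which in combination yield \eqref{E:cvc}, namely $H^k_\cI(U) \simeq H^k(U,\bC)$ for every $k < \nu$. Since $U$ has been chosen contractible, the classical (topological) Poincar\'e lemma gives $H^k(U,\bC) = 0$ for all $k > 0$. Chaining the three identifications
\[
  H^k(\cC_U,\bd) \ = \ H^k_\cI(U) \ \simeq \ H^k(U,\bC) \ = \ 0,
  \qquad 0 < k < \nu,
\]
produces the desired vanishing. Unwinding what $H^k(\cC_U,\bd) = 0$ means then furnishes the local primitive $\psi \in \cC^{k-1}_U$ with $\bd\psi = \phi$. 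Since $x$ was arbitrary, this is exactly local exactness in degrees $0 < k < \nu$.

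There is essentially no obstacle beyond making sure the isomorphism \eqref{E:cvc} is available on arbitrary open subsets $U$, not only on $U = D$. This is precisely the content of how Theorem \ref{T:hh} has been formulated, with its open $U \subset D$ quantifier; the $G_\bR$--equivariance assertion in Theorem \ref{T:hh}(d) is not needed for this corollary, and indeed the operators $\bd,\del,\bar\del$ cease to be $G_\bR$--invariant when restricted to a proper $U \subsetneq D$. For the Poincar\'e lemma argument only the bare vector-space isomorphism of cohomology groups is used, so the loss of homogeneity on $U$ causes no difficulty.
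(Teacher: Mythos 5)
Your argument coincides with the paper's one-line proof: the identification \eqref{E:cvc} is applied to a small contractible neighborhood, where the classical Poincar\'e lemma makes $H^k(U,\bC)$ vanish for $k>0$, forcing $H^k_\cI(U)=H^k(\cC_U,\bd)=0$ in the range $0<k<\nu$. You also correctly note that Theorem \ref{T:hh} is stated for arbitrary open $U\subset D$, so \eqref{E:cvc} is available locally, and that the loss of $G_\bR$--invariance on a proper $U$ is irrelevant since only the bare isomorphism of cohomology is used.
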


\begin{remark}[Relationship to the Bryant--Griffiths characteristic cohomology] \label{R:BG}
Equation \eqref{E:cvc} and Corollary \ref{C:PL3} are very like results of Bryant and Griffiths on the (prolonged) characteristic cohomology of an involutive exterior differential system, \cf Theorem 1 of \S6.1 and Theorem 2 of \S4.2 in \cite{MR1311820}, respectively.  Given this similarity, it is natural to ask: what is the relationship between our $\nu$ and their $n-\ell$?  I've chosen not to investigate the question here, but would like to observe that these integers agree when the IPR is a contact distribution, \cf Section \ref{S:eg_adj} of this paper and Example 1 of \cite[\S6.3]{MR1311820}
\end{remark}

The following Theorems \ref{T:E2} and \ref{T:E1} will be used in the proof of Theorem \ref{T:hh}.  Given an open subset $U \subset D$, let
\begin{equation} \label{E:cpxcoh}
  H^p(\sH^\ast_\ast(U),\d) \ \dfn \ 
  \frac{\tker\{\d:\cO_U(\sH^p_p) \to \cO_U(\sH^{p+1}_{p+1}) \}}
       {\tim\{\d : \cO_U(\sH^{p-1}_{p-1}) \to \cO_U(\sH^{p}_{p})\}}
\end{equation}
denote the cohomology of the complex \eqref{E:cpxll} on $U$.

\begin{theorem} \label{T:E2}
Let $U \subset D$ be an open subset.
{\bf (a)}
There exist identifications
$$
  H^p(U,\bC) \ = \ H^p(\sH^\ast_\ast(U),\d) \quad\hbox{for all } \ p < \nu\,.
$$
{\bf (b)} There exists an inclusion 
$$
  H^{\nu}(U,\bC) \ \inj \  H^{\nu}(\sH^\ast_\ast(U),\d) \,.
$$
The image is $\bigcap_{i=2}^\infty \tker\,\del_i$, where 
\begin{eqnarray*}
  \del_2 : H^{\nu}(\sH^\ast_\ast(U),\d) & \to & 
  \tker\{ \del_1 : \cO_U(\sH^{\nu+1}_{\nu+2}) 
          \to \cO_U( \sH^{\nu+2}_{\nu+3}) \}\\
  \del_{i+1} : \tker\,\del_i & \to & 
  \tker\{ \del_1 : \cO_U(\sH^{\nu+1}_{\nu+i}) 
          \to \cO_U( \sH^{\nu+2}_{\nu+i+1}) \} \,,\quad i \ge 2 \,.
\end{eqnarray*}
\noindent{\bf (c)} When $U = D$, the identifications and inclusions above are as $G_\bR$--modules, and the maps $\del_i$ are $G_\bR$--equivariant.
\end{theorem}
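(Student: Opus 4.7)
The plan is to compare $H^\sb(U,\bC)$ with the cohomology of \eqref{E:cpxll} via a weight--filtration spectral sequence on the BGG resolution.  The key input is that on $D$ there is a complex $(\cO(\sH^\sb), D)$ of holomorphic sheaves with $G_\bR$--invariant differential operators, where $\sH^n = G_\bR \times_V H^n(\fg_-,\bC) = \bigoplus_{m \ge n}\sH^n_m$ is the full bundle of Lie algebra cohomology in degree $n$, and this complex is a resolution of the constant sheaf $\bC$.  This is the BGG resolution, obtained from Kostant's description of $H^\sb(\fg_-,\bC)$ together with the general machinery of parabolic geometries, \cf \cite{MR2532439}.  It yields $H^p(U,\bC) = \bH^p\bigl(U,\cO(\sH^\sb)\bigr)$.

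Decompose $D = \sum_{k\ge 1} D_k$, where $D_k: \cO(\sH^n_m) \to \cO(\sH^{n+1}_{m+k})$ is the component raising the $\ttE$--weight by $k$; the strict increase $k \ge 1$ is a structural property of the BGG operators.  Filter by $F^w \cO(\sH^n) = \bigoplus_{m \ge w} \cO(\sH^n_m)$; then $D(F^w) \subset F^{w+1}$, so the induced spectral sequence has $E_0$--differential zero and $E_1^{w, n-w} = \cO_U(\sH^n_w)$ with $d_1 = D_1$.  The sub-complex of $E_1$ on the diagonal $w = n$ is precisely \eqref{E:cpxll}, once one identifies $D_1\bigl|_{\sH^p_p}$ with $\wp\circ\del = \d$, which is a direct computation from the construction of the BGG operator.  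Thus $E_2^{n,0} = H^n(\sH^\ast_\ast(U),\d)$.

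For $n < \nu$, the definition of $\nu$ forces $\sH^\ell = \sH^\ell_\ell$ for all $\ell \le \nu$.  Hence $E_1^{w, n-w} = 0$ whenever $w > n$ in the range $n \le \nu$, and in particular the only contribution to total degree $n < \nu$ on $E_\infty$ is $E_\infty^{n,0}$.  Every higher differential $d_r$ (for $r \ge 2$) into or out of this position vanishes for bidegree reasons: the outgoing target $\cO_U(\sH^{n+1}_{n+r})$ has $n+1 \le \nu$ and $n + r > n+1$, hence is zero, while the incoming source is already zero on $E_1$.  Therefore $E_\infty^{n,0} = E_2^{n,0}$, which is part (a).  For $n = \nu$, only $E_r^{\nu,0}$ contributes to total degree $\nu$ (because $\sH^\nu = \sH^\nu_\nu$), but outgoing differentials $d_r: E_r^{\nu,0} \to E_r^{\nu+r, 1-r}$ have targets built from $\cO_U(\sH^{\nu+1}_{\nu+r})$, which may be nonzero for $r \ge 2$.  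These differentials are the $\del_r$ defined inductively in the theorem (with $\del_1 = D_1$ on the relevant weight strata), and $E_\infty^{\nu,0} = \bigcap_{r\ge2} \ker \del_r$ realizes $H^\nu(U,\bC)$ as a subspace of $H^\nu(\sH^\ast_\ast(U),\d)$; this is part (b).  Part (c) is automatic, as the BGG complex, the filtration, every spectral sequence page, and every $\del_r$ are constructed $G_\bR$--equivariantly.

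The main technical input is establishing the BGG resolution $\bC \simeq (\cO(\sH^\sb), D)$ on $D$ and the identification of the weight--one component $D_1\bigl|_{\sH^p_p}$ with $\d$; once these are in hand, the spectral sequence analysis is formal.  A subordinate point is the comparison between hypercohomology and cohomology of global holomorphic sections, handled by resolving each $\cO(\sH^n)$ via its Dolbeault complex and showing that the resulting refined double complex degenerates appropriately in the range governed by $\nu$.
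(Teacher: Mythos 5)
Your proposal takes a genuinely different route from the paper's. The paper's proof in Section~\ref{S:prfE2} filters the \emph{holomorphic de Rham complex} $(\Omega^\sb, \del)$ --- a standard, elementary resolution of $\bC$ --- by the $\ttE$--weight filtration \eqref{E:filtp0}, and the theorem falls out of the resulting spectral sequence, whose $E_0$--differential is identified with the Lie algebra cohomology differential $\e$ (the holomorphic analog of Lemma~\ref{L:circE}). You instead invoke the holomorphic BGG resolution $(\cO(\sH^\sb), D)$ of $\bC$ as a black-box starting point. This is a substantially heavier input: the BGG resolution on $\check D$ (and hence on $D$) is a known result, but it is not established in the paper, and establishing it typically proceeds by precisely the weight--filtration argument the paper applies to $\Omega^\sb$. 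What the paper's route buys is that the diagonal complex \eqref{E:cpxll} \emph{emerges} at the $E_1$/$E_2$ level rather than being extracted from an independently granted BGG complex. Relatedly, your identification $D_1\big|_{\sH^p_p} = \d = \wp \circ \del$, which you label ``a direct computation,'' is exactly the content that Lemma~\ref{L:circE} supplies in the paper's version; stating it without matching the BGG operator's lowest--weight component against $\wp\circ\del$ leaves a real verification missing.

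There is also an internal inconsistency in the spectral sequence you set up. You begin by writing $H^p(U,\bC) = \bH^p(U,\cO(\sH^\sb))$, i.e.\ hypercohomology, but then filter and claim $E_1^{w,n-w} = \cO_U(\sH^n_w)$. That $E_1$ identification is valid for the filtration on the \emph{complex of global sections} $\cO_U(\sH^\sb)$, whose spectral sequence abuts to $H^\sb(\cO_U(\sH^\sb), D)$ --- not to $\bH^\sb(U,\cO(\sH^\sb))$. The hypercohomology filtration spectral sequence instead has $E_1$--terms involving the higher sheaf cohomologies $H^q(U,\cO(\sH^n_w))$ of the graded pieces, which are generally nonzero for non-Stein $U$ (e.g.\ by Bott--Borel--Weil phenomena). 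You flag this at the end as ``a subordinate point\ldots handled by resolving each $\cO(\sH^n)$ via its Dolbeault complex,'' but this is precisely where the work lies if the theorem is to hold for arbitrary open $U$ rather than just Stein $U$ (where all relevant sheaves are acyclic and the two spectral sequences agree). To be fair, the paper's own two-sentence sketch is also terse on this point, but its explicit appeal to ``arguments identical to those establishing Lemma~\ref{L:circE} and Corollary~\ref{C:circE}'' at least pins the $E_1$--identification on a concrete Maurer--Cartan computation rather than an external BGG theorem.
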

 
\noindent The theorem is proved in Section \ref{S:prfE2}.  Theorem \ref{T:E2}(a) and the local exactness of the complex $(\Omega^\sb, \del)$ yield a holomorphic Poincar\'e lemma for the operators $\d : \cO(\sH^p_p) \to \cO(\sH^{p+1}_{p+1})$.

\begin{corollary}[Holomorphic $\d$--Poincar\'e lemma] \label{C:PL2}
The operator $\d : \cO(\sH^p_p) \to \cO(\sH^{p+1}_{p+1})$ is locally exact for $0 < p < \nu$.  That is, if $\phi \in \cO(\sH^p_p)$ is $\d$--closed, then locally there exists $\psi \in \cO(\sH^{p-1}_{p-1})$ such that $\d\psi = \phi$.
\end{corollary}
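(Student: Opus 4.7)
The plan is to deduce the local exactness of $\d$ directly from Theorem \ref{T:E2}(a) by restricting to arbitrarily small contractible opens. Given a point $x_0 \in D$ and a $\d$--closed section $\phi \in \cO_U(\sH^p_p)$ defined on some open $U \subset D$ containing $x_0$, with $0 < p < \nu$, the first step is to shrink $U$ to a contractible open neighborhood $V \subset U$ of $x_0$---for example, the preimage of an open polydisc under a holomorphic coordinate chart centered at $x_0$. Because $V$ is contractible, $H^k(V,\bC) = 0$ for every $k > 0$.

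The second step is to apply Theorem \ref{T:E2}(a) to this $V$. In the range $0 < p < \nu$ it provides the identification
\[
  H^p(\sH^\ast_\ast(V),\d) \;=\; H^p(V,\bC) \;=\; 0\,.
\]
By the very definition \eqref{E:cpxcoh} of $H^p(\sH^\ast_\ast(V),\d)$, this vanishing is equivalent to the assertion that the kernel of $\d : \cO_V(\sH^p_p) \to \cO_V(\sH^{p+1}_{p+1})$ equals the image of $\d : \cO_V(\sH^{p-1}_{p-1}) \to \cO_V(\sH^p_p)$. Applied to $\phi|_V$, this produces the desired $\psi \in \cO_V(\sH^{p-1}_{p-1})$ satisfying $\d\psi = \phi|_V$, which is exactly the local exactness claim on the neighborhood $V$ of $x_0$.

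There is no genuine obstacle here: once Theorem \ref{T:E2}(a) is in hand, the argument is essentially immediate. The only mild technical point is that contractible open neighborhoods form a basis of the topology on the complex manifold $D$, which is standard. The hypothesis $0 < p < \nu$ is precisely what places us in the range where Theorem \ref{T:E2}(a) yields an identification (rather than only the inclusion provided by Theorem \ref{T:E2}(b) at $p = \nu$), so this method does not extend to $p = \nu$ without controlling the differentials $\del_i$ appearing in Theorem \ref{T:E2}(b).
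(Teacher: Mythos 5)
Your proposal is correct and follows exactly the route the paper intends: the paper's proof is a single sentence invoking Theorem~\ref{T:E2}(a) together with the local exactness of the holomorphic de Rham complex $(\Omega^\sb,\del)$, and your argument makes that one-liner explicit by shrinking to a contractible Stein (polydisc) neighborhood $V$, using $H^p(V,\bC)=0$ for $0<p<\nu$, and unwinding the definition \eqref{E:cpxcoh}. No gap; your added remark that $p=\nu$ would additionally require control of the higher differentials $\del_i$ of Theorem~\ref{T:E2}(b) is accurate and shows you understand why the range is exactly $p<\nu$.
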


Let $H^q(U,\sH^p_p)$ denote the cohomology of the sheaf of holomorphic sections of $\sH^p_p$.  

\begin{theorem} \label{T:E1}
Let $U \subset D$ be an open subset.
{\bf (a)}
There exist identifications 
$$
  H^q(U,\sH^p_p) \ = \ H^q( \cC^{p,\sb}_U , \bar\d ) \quad\hbox{for all } \ 
  q < \nu\,.
$$
{\bf (b)}
There exists an inclusion 
$$
  H^{\nu}(U,\sH^p_p) \ \inj \ H^{\nu}(\cC^{p,\sb}_U , \bar\d) \,.
$$
The image is $\bigcap_{i=2}^\infty \tker\,\bar\del_i$, where 
\begin{eqnarray*}
  \bar\del_2 : H^{\nu}(\cC^{p,\sb}_U , \bar\d) & \to & 
  \tker\{ \bar\del_1 : 
           \cC^\infty_U(\sH^p_p\ot\overline{\sH^{\nu+1}_{\nu+2}}) 
       \to \cC^\infty_U(\sH^p_p\ot\overline{\sH^{\nu+2}_{\nu+3}}) \}\\
  \bar\del_{i+1} : \tker\,\bar\del_i & \to & 
  \tker\{ \bar\del_1 : 
          \cC^\infty_U(\sH^p_p\ot\overline{\sH^{\nu+1}_{\nu+i}}) 
      \to \cC^\infty_U(\sH^p_p\ot\overline{\sH^{\nu+2}_{\ell+i+1}}) \} 
  \,,\quad i \ge 2 \,,
\end{eqnarray*}
\noindent{\bf (c)} When $U = D$, the identifications and inclusions above are as $G_\bR$--modules, and the maps $\bar\del_i$ are $G_\bR$--equivariant.
\end{theorem}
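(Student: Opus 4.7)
Plan. Theorem \ref{T:E1} is the $\bar\partial$-counterpart of Theorem \ref{T:E2}, and my proof would run in the same spectral-sequence framework. The idea is to extract the modified complex $(\cC^{p,\sb}_U, \bar\d)$ from the classical Dolbeault resolution of $\cO(\sH^p_p)$ using Kostant's theorem, and then show that the resulting spectral sequence degenerates in the range forced by the definition \eqref{E:dfn_lo} of $\nu$.

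I would first invoke the classical Dolbeault resolution $(\cA^{0,\sb}(U, \sH^p_p), \bar\partial)$ of $\cO(\sH^p_p)$, which computes $H^\sb(U, \sH^p_p)$. Since $\tw^{0,q}_D = G_\bR \times_V \tw^q \fg_+^*$, Kostant's theorem applied to $\fg_+$ (together with the identification \eqref{E:Hconj}) yields a $V$-module decomposition of $\tw^q \fg_+^*$ into the harmonic part $H^q(\fg_+, \bC) = \bigoplus_m \overline{H^q_m}$ -- further graded by $\ttE$-eigenvalue -- and an algebraically exact-plus-coexact complement. This lifts to a $G_\bR$-homogeneous bundle decomposition
\[
  \tw^{0,q}_D \ = \ \bigoplus_m \overline{\sH^q_m} \ \oplus \ (\text{non-harmonic}) \,.
\]

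Next I would equip $\cA^{0,\sb}(U, \sH^p_p)$ with a Kostant-type filtration compatible with $\bar\partial$ (filtering by $\ttE$-weight of the antiholomorphic factor). The associated spectral sequence has $E_0$-differential equal to the fiberwise algebraic piece induced by the Kostant codifferential; by Kostant's Hodge decomposition the cohomology of $E_0$ collapses the complex onto the harmonic subcomplex $\bigoplus_m \cC^\infty_U(\sH^p_p \otimes \overline{\sH^q_m})$, with induced $E_1$-differential identified with $\bar\d = \wp \circ \bar\partial$. By definition of $\nu$, $\sH^q_m = 0$ whenever $q < \nu$ and $m > q$, so in this range only the diagonal summand $\cC^{p,q} = \cC^\infty_U(\sH^p_p \otimes \overline{\sH^q_q})$ survives and no higher differentials are possible; the spectral sequence degenerates at $E_2$, yielding part (a). For $q = \nu$ higher-page differentials can persist and map $H^\nu(\cC^{p,\sb}_U, \bar\d)$ into the off-diagonal harmonic summands $\cC^\infty_U(\sH^p_p \otimes \overline{\sH^{\nu+1}_{\nu+i}})$; these are the $\bar\del_i$ of the statement, and the inclusion with image $\bigcap_i \tker \bar\del_i$ in part (b) is read off the $E_\infty$-page.

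Part (c) is immediate: over $U = D$ the Kostant decomposition is $V$-equivariant, the filtration is $G_\bR$-invariant, and $\bar\partial$ is $G_\bR$-invariant, so the entire spectral sequence is one of $G_\bR$-modules and every $\bar\del_i$ is $G_\bR$-equivariant. The main obstacle will be verifying that the chosen filtration is strictly preserved by $\bar\partial$ (the $G_\bR$-invariance of $\bar\partial$ handles its algebraic part, but one must check that the connection-piece respects the filtration) and then matching the abstract higher-page differentials of the spectral sequence with the explicit $\bar\del_i$ stated in the theorem. This step -- where the specific interaction between $\wp$ and Kostant's decomposition enters essentially -- is fully parallel to the identification of the $\del_i$ in the proof of Theorem \ref{T:E2}.
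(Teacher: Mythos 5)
Your proposal is correct and is essentially the paper's own argument: filter the Dolbeault resolution $(\cA^{0,\sb}(\sH^p_p),\bar\del)$ by the $\ttE$--weight on the antiholomorphic factor (the conjugate of \eqref{E:filtp0}, tensored through by $\sH^p_p$), show the filtration is $\bar\del$--stable via \eqref{E:grLB}, identify the $E_1$--page with $\cC^\infty_U(\sH^p_p \ot \overline{\sH^{\ell-m}_\ell})$ using Kostant's theorem, and read off (a), (b), (c) from the shape of the $E_1$--page and the definition of $\nu$ -- exactly as in Lemma~\ref{L:starE}, Corollary~\ref{C:starE} and Section~\ref{S:prfE1}. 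One small imprecision: the $E_0$--differential is the Chevalley--Eilenberg differential $\e$ of $\fg_+$ (computed in Lemma~\ref{L:circE} via the Maurer--Cartan form), not the Kostant codifferential $\e^*$; since Kostant's Hodge decomposition identifies the cohomology of either operator with the harmonic subspace, the conclusion is unaffected, but the identification of $\bar\del_0$ with $-\e$ is the substance of the analogue of Lemma~\ref{L:circE} and should not be glossed over.
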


\noindent The theorem is proved in Section \ref{S:prfE1}.  Theorem \ref{T:E1}(a) and the local exactness of the Dolbeault resolution of $\sH^p_p$ yield a Poincar\'e lemma for the operators $\bar\d$.

\begin{corollary}[$\bar\d$--Poincar\'e lemma] \label{C:PL1a}
The operator $\bar\d : \cC^{\sb,q} \to \cC^{\sb,q+1}$ is locally exact for $0 < q < \nu$.  That is, if $\phi \in \cC^{\sb,q}$ is $\bar\d$--closed, then locally there exists $\psi \in \cC^{\sb,q-1}$ such that $\bar\d\psi = \phi$.
\end{corollary}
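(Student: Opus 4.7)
The plan is to deduce the corollary directly from Theorem \ref{T:E1}(a) combined with the classical fact that the sheaf cohomology of a holomorphic vector bundle vanishes in positive degree on a sufficiently small (e.g.\ polydisc or Stein) open set. Since $\bar\d : \cC^{p,q} \to \cC^{p,q+1}$ preserves the first (holomorphic) degree $p$, and any $\bar\d$--closed $\phi\in \cC^{\sb,q}$ decomposes uniquely as $\phi = \sum_p \phi^{p,q}$ with each $\phi^{p,q} \in \cC^{p,q}$ being $\bar\d$--closed, it suffices to establish local exactness for each fixed $p$, i.e.\ on the complex $(\cC^{p,\sb},\bar\d)$.

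Fix a point $x\in D$ and choose a neighborhood $U\ni x$ that is Stein (any sufficiently small polydisc in local holomorphic coordinates will do). For such $U$ and any holomorphic vector bundle $\sE\to U$, Cartan's Theorem~B gives $H^q(U,\sE) = 0$ for all $q>0$. Applying this to the holomorphic vector bundle $\sH^p_p$ yields $H^q(U,\sH^p_p)=0$ for all $q>0$. By Theorem \ref{T:E1}(a), for $0 < q < \nu$ there is an identification
\[
   H^q(\cC^{p,\sb}_U,\bar\d) \ = \ H^q(U,\sH^p_p) \ = \ 0 \,.
\]

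Therefore, given a $\bar\d$--closed $\phi^{p,q}\in \cC^{p,q}_U$ with $0<q<\nu$, there exists $\psi^{p,q-1}\in \cC^{p,q-1}_U$ such that $\bar\d\psi^{p,q-1} = \phi^{p,q}$. Summing over $p$, we produce $\psi = \sum_p \psi^{p,q-1}\in \cC^{\sb,q-1}_U$ with $\bar\d\psi = \phi$, as required.

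There is no substantive obstacle: the only points that require any care are (i) verifying that $\bar\d$ respects the bigrading so that the problem reduces to fixed $p$, which is immediate from the definition \eqref{E:d}, and (ii) ensuring that the identification in Theorem~\ref{T:E1}(a) is valid on the chosen Stein neighborhood $U$, which is included in its statement.
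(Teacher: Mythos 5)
Your proof is correct and follows essentially the same route as the paper: the paper's inline justification says precisely that the corollary follows from Theorem~\ref{T:E1}(a) together with the local exactness of the Dolbeault resolution of $\sH^p_p$ (i.e.\ the vanishing of $H^q(U,\sH^p_p)$ for $q>0$ on small polydiscs), which is exactly what you invoke via Cartan's Theorem~B after reducing to fixed bidegree $p$.
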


\noindent Taking conjugates we obtain

\begin{corollary}[$\d$--Poincar\'e lemma] \label{C:PL1b}
The operator $\d : \cC^{p,\sb} \to \cC^{p+1,\sb}$ is locally exact for $0 < p < \nu$.  That is, if $\phi \in \cC^{p,\sb}$ is $\d$--closed, then locally there exists $\psi \in \cC^{p-1,\sb}$ such that $\d\psi = \phi$.
\end{corollary}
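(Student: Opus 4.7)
The plan is to deduce Corollary \ref{C:PL1b} from Corollary \ref{C:PL1a} by applying complex conjugation to the double complex $(\cC^{\sb,\sb},\d,\bar\d)$. The bigrading \eqref{E:Cpq} is set up so that $\overline{\cC^{p,q}} = \cC^{q,p}$; so complex conjugation defines a real-linear antilinear involution on $\cC = \bigoplus_{p,q}\cC^{p,q}$ that swaps the two filtration indices. The strategy is to show that this involution intertwines $\d$ and $\bar\d$, and then transport the known $\bar\d$-Poincar\'e lemma across.

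The first step is to verify $\overline{\d\eta} = \bar\d\,\bar\eta$ for every section $\eta \in \cC^{p,\sb}$. This reduces to two observations. First, on ordinary smooth forms $\overline{\del\eta} = \bar\del\,\bar\eta$, which is immediate from the definitions. Second, the projection $\wp : \cA \to \cC$ used in \eqref{E:A-C} commutes with conjugation. For this I would invoke the explicit formula \eqref{E:iperp1}: the complement $\ffi^\perp_k = \bigoplus_{p+q=k} H^p_p \otimes \overline{H^q_q}$ is manifestly conjugation-invariant, and the ideal $\ffi$ itself is conjugation-invariant because the real distribution $T_1$ gives $\tAnn(T_{1,\bC})$ stable under $\overline{\cdot}$. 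Thus the decomposition \eqref{E:i+iperp} is defined over $\bR$ and $\wp$ is real. Combining with $\d = \wp\circ\del$ and $\bar\d = \wp\circ\bar\del$ yields $\overline{\d\eta} = \bar\d\,\bar\eta$ as claimed.

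The argument is then immediate. Suppose $0 < p < \nu$ and $\phi \in \cC^{p,\sb}_U$ satisfies $\d\phi = 0$. Then $\bar\phi \in \cC^{\sb,p}_U$ and $\bar\d\,\bar\phi = \overline{\d\phi} = 0$. By Corollary \ref{C:PL1a}, applied in the range $0 < p < \nu$, there exists, on a possibly smaller neighborhood of any point, a section $\eta \in \cC^{\sb,p-1}_U$ with $\bar\d\eta = \bar\phi$. Setting $\psi = \bar\eta \in \cC^{p-1,\sb}_U$ gives $\d\psi = \overline{\bar\d\eta} = \phi$, which is the desired local primitive.

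The only real work, and what I would regard as the main (but modest) obstacle, is verifying the compatibility of $\wp$ with complex conjugation. Once the $V$-module decomposition \eqref{E:i+iperp} is chosen as in \eqref{E:iperp1} via Kostant's theorem, this is essentially automatic from the symmetry between the $(p,q)$ and $(q,p)$ summands, so no computation beyond a formal appeal to Corollary \ref{C:PL1a} is required.
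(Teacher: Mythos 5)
Your proposal is correct and matches the paper exactly: the paper derives Corollary \ref{C:PL1b} from Corollary \ref{C:PL1a} with the single remark ``Taking conjugates we obtain,'' and your argument simply makes explicit what that entails. The details you supply --- that $\overline{\cC^{p,q}} = \cC^{q,p}$ (from \eqref{E:Cpq}), that the decomposition \eqref{E:i+iperp} is conjugation-stable because $T_1$ is a real bundle and $\ffi^\perp$ in \eqref{E:iperp1} is manifestly symmetric under $p \leftrightarrow q$, and hence that $\wp$ and consequently the pair $(\d,\bar\d)$ are conjugation-equivariant --- are precisely the facts the paper implicitly relies on.
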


To emphasize the $G_\bR$--module structure we will prove the results of Section \ref{S:cvc} for 
\[
  U \ = \ D \,.
\]
The results for arbitrary open sets $U \subset D$ follow by identical arguments.

\subsection{Weighted filtration of forms} \label{S:filt} 

The basic idea underlying the proofs of Theorems \ref{T:E2} and \ref{T:E1} is presented in this section.  The spectral sequences that arise are induced by filtrations that are variants of the basic filtration \eqref{E:filtp0} introduced here.  For each of these variants we will have analogs of Lemma \ref{L:circE} and Corollary \ref{C:circE}, and the theorems are essentially these analogs.

Recall the definition \eqref{E:dfn_cT}.  Define a splitting
$$
   \cT D \ = \ \ \bigoplus_{\ell>0} \, \cT_\ell 
  \quad\hbox{by}\quad \cT_\ell \ = \ G \times_V \fg_{-\ell} \,,
$$
and a filtration
$$
  F_\ell( \cT D) \ = \ \cT_1 \,\op\,\cT_2 
  \,\op\cdots\op\,\cT_\ell \,.
$$
The relation \eqref{E:grLB} yields 
\begin{equation} \label{E:br}
  [ F_a(\cT D) , F_b(\cT D)] \ \subset \ 
   F_{a+b}(\cT D) \,.
\end{equation}

Recall the definition \eqref{E:dfn_pq} and equation \eqref{E:cT*}.  Define a splitting of $ \cT D^{\,*}$ by
\begin{subequations} \label{SE:split*}
\begin{equation}
  \cT D^{\,*} \ = \ \tw^{1,0}_D \ = \ \bigoplus_{\ell} \tw^{1,0}_\ell \,,
\end{equation}
where
\begin{equation}
  \tw^{1,0}_\ell \ \dfn \ G \times_V \fg_{-\ell}^* \ \simeq \ G \times_V \fg_{\ell} \,,
\end{equation}
\end{subequations}
and a filtration on $\tw^{p,0}_D$ by 
\begin{equation} \label{E:filtp0}
  F^\ell( \tw^{p,0}_D ) \ \dfn \ 
  \tim\left\{  \bigoplus_{\sum b_i \ge \ell}
  \tw^{1,0}_{b_1} \ot \cdots \ot \tw^{1,0}_{b_p}
  \ \to \ \tw^{p,0}_D
  \right\} \,.
\end{equation}
For example,
$$
  F^\ell(\tw^{1,0}_D) \ = \ \tw^{1,0}_\ell \,\op\,\tw^{1,0}_{\ell+1} \,\op\,
  \tw^{1,0}_{\ell+2}\,\op\cdots
$$
is the annihilator of $F^{\ell-1}( \cT D )$ in $\cT D^{\,*} = \tw^{1,0}_D$.

The filtration \eqref{E:filtp0} induces a filtration $F^\sb( \cA^{p,0})$ on the smooth $(p,0)$--forms.  Moreover, \eqref{E:br} implies $\del$ preserves the filtration
\begin{equation} \label{E:d'}
  \del\,F^\ell(\cA^{\sb,0}) \ \subset \ F^\ell( \cA^{\sb,0}) \,.
\end{equation}
Thus we obtain a spectral sequence $\{ \del_i : {}^\circ{}E_i^{\ell,-m} \to {}^\circ{}E_i^{\ell+i,1-m-i} \}$ abutting to the cohomology of the complex $(\cA^{\sb,0},\del)$,
$$
  {}^\circ{}E_i \ \Longrightarrow \ H(\cA^{\sb,0},\del) \,.
$$
Note that $F^\ell\cA^{p,0} = \cA^{p,0}$ if $\ell \le p$, so that the associated graded is
$$
  {}^\circ{}E_0^{\ell,-m} \ = \ \frac{F^\ell \cA^{\ell-m,0}}{F^{\ell+1}\cA^{\ell-m,0}}\,,
$$
and the spectral sequence `lives' in the lower--right quadrant, \cf Figure \ref{f:circE0}.
\begin{small}
\begin{figure}[h] 
\caption[The page ${}^\circ{}E_0^{\ell,-m}$]{The page  ${}^\circ{}E_0^{\ell,-m} = \cA_\ell^{\ell-m,0}$.}
$$
\renewcommand{\arraystretch}{1.3}
\begin{array}{cccccc}
 \cA^{0} & \cA^{1,0}_1 & \cA^{2,0}_2 & \cA^{3,0}_3 & \cA^{4,0}_4 & \cdots \\
  & & \uparrow\hbox{\scriptsize{$\del_0$}} 
    & \uparrow\hbox{\scriptsize{$\del_0$}} 
    & \uparrow\hbox{\scriptsize{$\del_0$}} &  \\ 
 0 & 0 & \cA^{1,0}_2 & \cA^{2,0}_3 & \cA^{3,0}_4 & \cdots \\
   & & & \uparrow\hbox{\scriptsize{$\del_0$}} 
   & \uparrow\hbox{\scriptsize{$\del_0$}} &  \\ 
 0 & 0 & 0 & \cA^{1,0}_3 & \cA^{2,0}_4 & \cdots \\
   & & & & \uparrow\hbox{\scriptsize{$\del_0$}} &  \\ 
 0 & 0 & 0 & 0 & \cA^{1,0}_4 & \cdots \\
 \vdots & \vdots & \vdots & \vdots & \vdots & \\
\end{array}
$$
\label{f:circE0}
\end{figure}
\end{small}

Let 
$$
  \cA^{p,0}_\ell \ \simeq \ F^\ell(\cA^{p,0})/F^{\ell+1}(\cA^{p,0}) 
  \ = \ {}^\circ{}E^{\ell,p-\ell}_0
$$
denote the smooth sections of 
\begin{equation} \label{E:twgrad}
  \frac{F^\ell( \tw^{p,0}_D )}{F^{\ell+1}( \tw^{p,0}_D )} \ \simeq \ 
  \tim\left\{  \bigoplus_{\sum b_i = \ell}
  \tw^{1,0}_{b_1} \ot \cdots \ot \tw^{1,0}_{b_p}
  \ \to \ \tw^{p,0}_D
  \right\} \ \dfn \ \tw^{p,0}_\ell \,.
\end{equation}
It will be helpful to note that $\tw^{p,0}_\ell$ admits the following description as a $G_\bR$--homogeneous vector bundle.  Let 
$$
  \tw^p \fg_-^* \ = \ \tw^p_{p}\,\fg_-^* \ \op\ \tw^p_{p+1}\,\fg_-^* \ \op\ 
  \tw^p_{p+2}\,\fg_-^* \ \op\cdots
$$
be the $\ttE$--eigenspace decomposition of $\tw^p\fg_-^*$; here $\ttE$ acts on $\tw^p_{\ell}\,\fg_-^*$ by the scalar $\ell$.  Then 
$$
  \tw^{p,0}_\ell \ = \ G_\bR \times_V \tw^p_{\ell}\,\fg_-^* \,.
$$
Given $\phi \in \tw^p_\ell\fg_-^*$ and $X_i \in \fg_{-a_i}$, with $0<a_i$, observe that 
\begin{equation} \label{E:tw_ell}
  \phi(X_1,\ldots,X_p) \ \not= \ 0 \ \hbox{ only if } \ 
  {\textstyle{\sum}} a_i \,=\, \ell \,.
\end{equation}

\begin{lemma} \label{L:circE}
The $G_\bR$--module ${}^\circ{}E^{\ell,-m}_1$ is naturally identified with the smooth sections of 
$\sH^{\ell-m}_{\ell}$.  Moreover, $( {}^\circ{}E^{\sb,0}_1 , \del_1 ) = ( \cC^{\sb,0} , \d )$, so that ${}^\circ{}E^{p,0}_2 = H^p( \cC^{\sb,0} , \d )$ as $G_\bR$--modules.
\end{lemma}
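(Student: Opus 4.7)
The plan is to (i) compute the $E_0$ page and identify $\del_0$ fibrewise with the Chevalley--Eilenberg differential $\e$ of \eqref{E:lac_d} applied to $\fg_-$, (ii) read off the $E_1$ page from the $\ttE$-graded decomposition of $\fg_-$-cohomology, and (iii) match $\del_1$ on the diagonal strand $p = \ell$ with $\d = \wp\circ\del$.

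From \eqref{E:twgrad} we have ${}^\circ{}E_0^{\ell,-m} = C^\infty(G_\bR \times_V \tw^{\ell-m}_\ell\fg_-^*)$. To compute $\del_0$ I would apply the Cartan formula to $\del\phi$, with $\phi \in F^\ell \cA^{p,0}$ evaluated on $(X_0,\ldots,X_p)$ where each $X_i$ is a local section of $\cT_{a_i}$ and $\sum a_i = \ell$. On the associated graded $F^\ell/F^{\ell+1}$ we may take $\phi$ of pure $\ttE$-weight $\ell$, and then \eqref{E:tw_ell} forces the vector-field derivative terms $X_i(\phi(X_0,\ldots,\hat X_i,\ldots,X_p))$ to vanish, since their $p$ arguments have total weight $\ell - a_i < \ell$. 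For the bracket terms, \eqref{E:br} says $[X_i,X_j] \in F_{a_i+a_j}(\cT D)$; only its component in $\cT_{a_i+a_j}$ contributes modulo $F^{\ell+1}$, and this top component is $C^\infty$-linear in $X_i,X_j$ and coincides with the Lie bracket $\fg_{-a_i}\times\fg_{-a_j}\to\fg_{-a_i-a_j}$ inherited from $\fg_-$. Comparing with \eqref{E:lac_d} shows that $\del_0$ is $V$-equivariantly induced by $\e$ applied fibrewise.

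Since $\ttE$ lies in the centre of $\fg_0$ and $\e$ is $\fg_0$-equivariant, $\e$ respects the $\ttE$-graded decomposition $\tw^\sb \fg_-^* = \bigoplus_\ell \tw^\sb_\ell\fg_-^*$. The cohomology at position $p = \ell-m$ of the weight-$\ell$ strand is then $H^p_\ell$ by \eqref{E:Hev}, so passing to smooth sections yields the first assertion ${}^\circ{}E_1^{\ell,-m} = C^\infty(\sH^{\ell-m}_\ell)$ as $G_\bR$-modules.

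Setting $m=0$, so $p=\ell$, this reads ${}^\circ{}E_1^{p,0} = C^\infty(\sH^p_p) = \cC^{p,0}$ by \eqref{E:dfn_cC}. To lift a class in $\cC^{p,0}$ to a cocycle $\phi \in \cA^{p,0}_p$ I would use the Kostant harmonic complement $\fj^\perp = \bigoplus_\ell H^\ell_\ell$ of Section \ref{S:Iperp}, which furnishes a canonical $V$-equivariant splitting of ${}^\circ{}E_0 \twoheadrightarrow {}^\circ{}E_1$ along the diagonal. The spectral sequence prescription $\del_1[\phi] = [\del\phi]$ in $F^{p+1}/(F^{p+2}+\tim\,\del_0)$ is then represented by the harmonic part of the weight-$(p+1)$ component of $\del\phi$, which is exactly $\wp\circ\del\,\phi = \d\,\phi$. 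The final identification ${}^\circ{}E_2^{p,0} = H^p(\cC^{\sb,0},\d)$ is then immediate. The main technical subtlety is this last matching: one must verify that the harmonic splitting is compatible with the weight filtration so that $\del_1$ is computed on the nose by $\wp\circ\del$, and not merely modulo $\tim\,\del_0$.
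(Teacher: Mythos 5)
Your proof is correct and follows essentially the same route as the paper's: both reduce $\del_0$ to the Chevalley--Eilenberg differential $\e$ on the $\ttE$-graded pieces of $\tw^\sb\fg_-^*$, then read the $E_1$-page off Kostant's decomposition. The one cosmetic difference is that where you argue the top component of $[X_i,X_j]$ is tensorial (the Levi bracket) and reproduces the algebraic bracket on $\fg_-$, the paper instead pulls back the Maurer--Cartan form and reduces to representatives with $\w_-(\xi_i)$ constant; and where the paper dismisses $\del_1=\d$ as definition chasing, you spell out the chase via the Kostant harmonic splitting $\fj^\perp\subset\tw^\sb_\sb\fg_-^*$, which is exactly the point, and your flagged ``subtlety'' is already discharged by the fact that both $\wp$ and the identification ${}^\circ{}E_1^{\sb,0}\cong\cC^{\sb,0}$ are defined through the same complement $\fj^\perp$.
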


\begin{proof}
We will show that the vertical differential $\del_0$ is algebraic; in fact, it is given (up to a sign) by the Lie algebra cohomology differential $\e : \tw^p \fg_-^* \to \tw^{p+1} \fg_-^*$ of Section \ref{S:lac}.  This is seen as follows.  Let $\w$ denote the $\fg_\bC$--valued left-invariant Maurer-Cartan form on $G_\bR$, and let $\w_-$ denote the $\fg_-$--valued component.  Given a local section $D \to G_\bR$, we abuse notation and let $\w$ and $\w_-$ also denote the pull-backs to $D$.  Locally, any $\phi \in \cA^{p,0}$ is of the form $\phi = f(\w_-\wedge\cdots\wedge\w_-)$ where $f : D \to \tw^p\fg_-^*$ a smooth, locally defined function.  Likewise, any $\phi \in \cA^{p,0}_\ell$  is of the form $\phi = g(\w_-\wedge\cdots\wedge\w_-)$ with $g : D \to \tw^p_{\ell}\,\fg_-^*$ is a smooth, locally defined function.  (To be precise, we regard $g$ as a map to $\tw^q\fg_-^*$ taking values in the annihilator of $\op_{m\not=\ell}\, \tw^q_{m}\fg_-$.)

Fix $\phi \in \cA^{p,0}_\ell = {}^\circ{}E^{\ell,p-\ell}_0$.  From \eqref{E:tw_ell} we see that to compute the differential $\del_0 \phi \in \cA^{p+1,0}_{\ell}$ it suffices to compute $(\del_0 \phi)(\xi_0 , \xi_1 , \ldots, \xi_p)$ where $\xi_i$ is a smooth section of $\cT_{a_i}$ and $\sum a_i = \ell$.  Without loss of generality, we may assume that $\w_-(\xi_i) = X_i \in \fg_{-a_i}$ is constant.  Then
\begin{eqnarray*}
  (\del_0 \phi)(\xi_0 , \xi_1 , \ldots, \xi_p) & = & 
  \sum_i (-1)^i \xi_i \,\phi(\xi_0 , \ldots , \hat \xi_i , \ldots , \xi_p) \\
  & & - 
  \sum_{i<j} (-1)^{i+j} \phi\left( [ \xi_i , \xi_j ] , 
     \xi_0 , \ldots , \hat \xi_i , \ldots , \hat \xi_j , \ldots , \xi_p\right) \,.
\end{eqnarray*}
By \eqref{E:tw_ell}, we have $\phi(\xi_0 , \ldots , \hat \xi_i , \ldots , \xi_p) = 0$.
Therefore, 
\begin{eqnarray*}
  (\del_0 \phi)(\xi_0 , \xi_1 , \ldots, \xi_p) & = & 
  -\sum_{i<j} (-1)^{i+j} \phi\left( [ \xi_i , \xi_j ] , \xi_0 , \ldots , 
  \hat \xi_i , \ldots , \hat \xi_j , \ldots , \xi_p\right) \\
  & = & -\sum_{i<j} (-1)^{i+j} f\left( [ X_i , X_j ] , 
     X_0 , \ldots , \hat X_i , \ldots , \hat X_j , \ldots , X_p\right) \\
  & = & -(\e f)(X_0 , \ldots , X_p) \,.
\end{eqnarray*}
Therefore, the differential $\del_0 : \cA^{p,0}_\ell \to \cA^{p+1}_{\ell}$ is the map naturally induced by restriction of $-\e : \tw^p\fg_-^* \to \tw^{p+1}\fg^*_-$ to the $\ttE$--eigenspace $\tw^p_{\ell}\,\fg_-^*$ of eigenvalue $\ell$.  It now follows from \eqref{E:Hev} and \eqref{E:dfn_sH} that ${}^\circ{}E^{\ell,-m}_1 = \cC^\infty(\sH^{\ell-m}_{\ell})$, establishing the first half of the lemma.  From the definition \eqref{E:dfn_cC}, we see that ${}^\circ{}E^{\ell,0}_1 = \cC^{\ell,0}$.  The final assertion that $\del_1 = \d$ is straightforward definition chasing.
\end{proof}

\begin{corollary} \label{C:circE}
{\bf (a)}
There exist $G_\bR$--module identifications 
$$
  H^p(\cA^{\sb,0},\del) \ = \ H^p(\cC^{\sb,0},\d) 
  \quad\hbox{for all } \ p < \nu \,.
$$  
{\bf (b)}
There exists a $G_\bR$--module inclusion 
$$
  H^{\nu}(\cA^{\sb,0},\del) \ \inj \ H^{\nu}(\cC^{\sb,0},\d) \,.
$$  
The image is $\bigcap_{i=2}^\infty \tker\,\del_i$, where 
\begin{eqnarray*}
  \del_2 : H^{\nu}(\cC^{\sb,0} , \d) & \to & 
  \tker\{ \del_1 : \cC^\infty(\sH^{\nu+1}_{\nu+2}) 
          \to \cC^\infty( \sH^{\nu+2}_{\nu+3}) \}\\
  \del_{i+1} : \tker\,\del_i & \to & 
  \tker\{ \del_1 : \cC^\infty(\sH^{\nu+1}_{\nu+i}) 
          \to \cC^\infty( \sH^{\nu+2}_{\nu+i+1}) \} \,,\quad i \ge 2 \,.
\end{eqnarray*}
and each $\del_i$ is a $G_\bR$--equivariant map. 
\end{corollary}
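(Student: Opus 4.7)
The plan is to read off Corollary \ref{C:circE} directly from the spectral sequence ${}^\circ E$ of the filtration \eqref{E:filtp0}, which abuts to $H^\sb(\cA^{\sb,0},\del)$. By Lemma \ref{L:circE}, the $E_1$ page is ${}^\circ E_1^{\ell,-m} = \cC^\infty(\sH^{\ell-m}_\ell)$, the differential $\del_1$ along the diagonal $m=0$ is $\d$, and ${}^\circ E_2^{p,0} = H^p(\cC^{\sb,0},\d)$. The single input powering the whole argument is the vanishing dictated by \eqref{E:dfn_lo}: for every $p \le \nu$ and $m > p$ one has $H^p_m = 0$, hence $\cC^\infty(\sH^p_\ell) = 0$ whenever $p \le \nu$ and $\ell > p$. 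In terms of the $E_1$ page of Figure \ref{f:circE0}, every entry in columns $\ell \le \nu$ off the top diagonal ($m>0$) is trivial.

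For part (a), fix $p<\nu$. The outgoing higher differentials $\del_i\colon {}^\circ E_i^{p,0} \to {}^\circ E_i^{p+i,1-i}$ land in targets whose $E_1$ entry has form degree $p+1 \le \nu$ with $m = i-1 \ge 1$, hence vanishes by the $\nu$-bound; the incoming ones $\del_i\colon {}^\circ E_i^{p-i,i-1} \to {}^\circ E_i^{p,0}$ start at sources of form degree $p-2i+1 \le p-3 < \nu$ with $m = i-1 \ge 1$, which vanish for the same reason. Therefore ${}^\circ E_\infty^{p,0} = {}^\circ E_2^{p,0} = H^p(\cC^{\sb,0},\d)$. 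The remaining associated-graded pieces ${}^\circ E_\infty^{\ell,p-\ell}$ with $\ell>p$ are subquotients of ${}^\circ E_1^{\ell,p-\ell} = \cC^\infty(\sH^p_\ell) = 0$, so the filtration on $H^p(\cA^{\sb,0},\del)$ collapses to give (a).

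For part (b), take $p = \nu$. By the very definition of $\nu$, $\cC^\infty(\sH^\nu_\ell) = 0$ for $\ell > \nu$, so once again ${}^\circ E_\infty^{\ell,\nu-\ell} = 0$ for $\ell > \nu$ and the entire $H^\nu(\cA^{\sb,0},\del)$ is packaged in ${}^\circ E_\infty^{\nu,0}$. Incoming higher differentials to $(\nu,0)$ still vanish by the same form-degree bookkeeping, but the outgoing $\del_i\colon (\nu,0) \to (\nu+i,1-i)$ now land in groups of form degree $\nu+1$, where the $\nu$-bound no longer applies. Hence
\[
  H^\nu(\cA^{\sb,0},\del) \ =\ {}^\circ E_\infty^{\nu,0} \ =\ \bigcap_{i\ge2} \tker(\del_i) \ \subset\ {}^\circ E_2^{\nu,0} \ =\ H^\nu(\cC^{\sb,0},\d)\,,
\]
which is the advertised inclusion together with the intersection-of-kernels description. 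The targets themselves are identified with subspaces of $\cC^\infty(\sH^{\nu+1}_{\nu+i})$ by one further iteration of the same analysis: at each position $(\nu+i,1-i)$, any incoming differential $\del_j$ ($j<i$) starts at form degree $\le \nu$ with $m = i-j \ge 1$, which is forced to vanish by the $\nu$-bound, so ${}^\circ E_i^{\nu+i,1-i}$ embeds into ${}^\circ E_2^{\nu+i,1-i} = \tker\{\del_1 \colon \cC^\infty(\sH^{\nu+1}_{\nu+i}) \to \cC^\infty(\sH^{\nu+2}_{\nu+i+1})\}$, as in the statement.

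Part (c) is then essentially automatic: the filtration \eqref{E:filtp0} is built from $G_\bR$-homogeneous subbundles and $\del$ is $G_\bR$-invariant on $D$, so the whole spectral sequence is one of $G_\bR$-modules and every transgression $\del_i$ is $G_\bR$-equivariant. I do not anticipate any deep obstacle. The one place that requires care is simply the bookkeeping of incoming and outgoing arrows at several positions near the $\nu$-edge at once; the cleanest way to keep this straight is to draw the extended $E_1$ page, mark every entry forced to vanish by the $\nu$-bound, and then check that each arrow above either starts or ends at a marked zero.
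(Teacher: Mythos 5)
Your proof coincides with the paper's in both structure and detail: both rest on the spectral sequence ${}^\circ E$ of the filtration \eqref{E:filtp0}, the computation ${}^\circ E_1^{\ell,-m} = \cC^\infty(\sH^{\ell-m}_\ell)$ from Lemma \ref{L:circE}, the vanishing dictated by the $\nu$-bound, and the abutment to $H^\sb(\cA^{\sb,0},\del)$. One caution on the bookkeeping of incoming differentials: in the paper's indexing ${}^\circ E_i^{\ell,-m}$, the source of the arrow into $(p,0)$ sits at $(\ell',-m') = (p-i,\,i-1)$, so $m' = 1-i \le -1$ and the form degree is $\ell'-m' = p-1$, not $p-2i+1$. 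The vanishing of these sources is therefore not a consequence of the $\nu$-bound (which requires $m>0$ and form degree $\le\nu$) but of the fact that the spectral sequence is supported in $m\ge 0$: the $\ttE$--eigenvalues on $\tw^p\fg_-^*$ are all $\ge p$, so ${}^\circ E_0^{\ell,-m}=0$ for $m<0$. Your conclusion is correct regardless, and you apply the indexing correctly when you analyze the arrows into the targets $(\nu+i,1-i)$ of $\del_i$, so the slip is local and harmless. Everything else, including the identification of the codomains of the $\del_i$ as subspaces of $\cC^\infty(\sH^{\nu+1}_{\nu+i})$ and the $G_\bR$--equivariance of the whole picture, matches the argument the paper gives.
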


\begin{proof}
Recall the definitions \eqref{E:dfn_sH} and \eqref{E:dfn_lo}; the identification of ${}^\circ{}E^{\ell,-m}_1$ with $\cC^\infty(\sH^{\ell-m}_\ell)$ by Lemma \ref{L:circE} implies that 
\begin{equation} \nonumber
  {}^\circ{}E^{\ell,-m}_1 \ = \ 0 \,, \quad\hbox{for all } \  
  m > 0 \ \hbox{ and } \ \ell-m \le \nu \,,
\end{equation}
\cf Figure \ref{f:circE1}.
\begin{small}
\begin{figure}[h] 
\caption{The page ${}^\circ{}E_1^{\ell,-m} = \cC^\infty( \sH^{\ell-m}_\ell)$.}
$$
\renewcommand{\arraystretch}{1.5}
\begin{array}{cccccccc}
  {}^\circ{}E_1^{0,0} & \cdots & {}^\circ{}E_1^{\nu,0} & {}^\circ{}E_1^{\nu+1,0} 
  & {}^\circ{}E_1^{\nu+2,0} & {}^\circ{}E_1^{\nu+3,0} & {}^\circ{}E_1^{\nu+4,0} 
  & \cdots \\
  0 & \cdots & 0 & 0 & {}^\circ{}E_1^{\nu+2,-1} & {}^\circ{}E_1^{\nu+3,-1} 
  & {}^\circ{}E_1^{\nu+4,-1} & \cdots \\
  0 & \cdots & 0 & 0 & 0 & {}^\circ{}E_1^{\nu+3,-2} 
  & {}^\circ{}E_1^{\nu+4,-2} & \cdots \\
  0 & \cdots & 0 & 0 & 0 & 0 & {}^\circ{}E_1^{\nu+4,-3} & \cdots \\
  \vdots & & \vdots & \vdots & \vdots & \vdots & \vdots & 
\end{array}
$$
\label{f:circE1}
\end{figure}
\end{small}
Since the spectral sequence abuts to the cohomology $H(\cA^{\sb,0},\del)$, we see that
$$
  {}^\circ{}E^{p,0}_\infty \ =\ H^p(\cA^{\sb,0},\del) 
  \quad\hbox{for all} \ p \le \nu \,.
$$
In the case that $p < \nu$, we have ${}^\circ{}E^{p,0}_\infty = {}^\circ{}E^{p,0}_2$.  This yields the first half of the corollary.

In the case that $p=\nu$, we see that 
$$
  {}^\circ{}E_{i+1}^{\nu,0} \ = \ 
  \tker\{ \del_i : {}^\circ{}E^{\nu,0}_i \to {}^\circ{}E^{\nu+i,1-i}_i \}
  \quad\hbox{for all } \ i \ge 2 \,.
$$
Thus,
$$
  {}^\circ{}E_\infty^{\nu,0} \ = \ 
  \bigcap_{i=2}^\infty \tker\{ \del_i : {}^\circ{}E^{\nu,0}_i 
  \to {}^\circ{}E^{\nu+i,1-i}_i \} \ \subset \ {}^\circ{}E_2^{\nu,0}\,,
$$
yielding the second half of the corollary.
\end{proof}

Before continuing to the proofs of the theorems, we briefly discuss the conjugate versions of the filtration \eqref{E:filtp0}, Lemma \ref{L:circE} and Corollary \ref{C:circE}.  By \eqref{E:conjgell} and \eqref{E:dfn_pq}, we have $\tw^{0,q}_D = \overline{\tw^{q,0}_D}$.  Given \eqref{E:filtp0}, we may define a filtration
\begin{equation} \label{E:filt0q}
  F^\ell( \tw^{0,q}_D ) \ \dfn \ \overline{F^\ell( \tw^{q,0}_D )}\,.
\end{equation}
Let $F^\ell(\cA^{0,\sb})$ denote the corresponding filtration of $\cA^{0,\sb}$.  Note that $F^\ell(\cA^{0,q}) = \overline{F^\ell(\cA^{q,0})}$.  And so, by \eqref{E:d'} the differential $\bar\del$ preserves the filtration
\begin{equation} \label{E:d''}
  \bar\del\,F^\ell(\cA^{0,\sb}) \ \subset \ F^\ell(\cA^{0,\sb}) \,.
\end{equation}
Since $(\cA^{0,\sb},\bar\del)$ is the Dolbeault resolution of $\cO$, we see that the filtration gives rise to a spectral sequence $\{ \bar\del_i : {}^\star{}E^{\ell,-m}_i \to {}^\star{}E^{\ell+i,1-m-i}_i \}$ abutting to the sheaf cohomology $H^\sb(D,\cO)$,
$$
  {}^\star{}E_i \ \Longrightarrow \ H(\cA^{0,\sb},\bar\del) 
  \,=\, H^\sb(D,\cO) \,.
$$

\begin{lemma} \label{L:starE}
The $G_\bR$--module ${}^\star{}E^{\ell,-m}_1$ is naturally identified with the smooth sections of 
$\overline{\sH^{\ell-m}_{\ell}}$.  Moreover, $({}^\star{}E^{\sb,0}_1,\bar\del_1) = (\cC^{0,\sb},\bar\d)$, so that $  {}^\star{}E^{q,0}_2 = H^q\left( \cC^{0,\sb} \,,\, \bar\d \right)$ as $G_\bR$--modules.
\end{lemma}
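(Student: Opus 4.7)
The plan is to obtain Lemma~\ref{L:starE} by conjugating the arguments used to prove Lemma~\ref{L:circE}. Since the filtration \eqref{E:filt0q} is defined as $F^\ell(\tw^{0,q}_D) = \overline{F^\ell(\tw^{q,0}_D)}$ and $\bar\del$ is the complex conjugate of $\del$, the spectral sequence ${}^\star E$ is, term by term, the complex conjugate of the spectral sequence ${}^\circ E$ of Lemma~\ref{L:circE}.  I intend to carry out this conjugation explicitly at the $E_0$ and $E_1$ stages, using \eqref{E:Hconj} to produce the stated identification.

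First I would identify the associated graded bundles. The graded piece $F^\ell(\tw^{0,q}_D)/F^{\ell+1}(\tw^{0,q}_D)$ is by definition $\overline{\tw^{q,0}_\ell}$, which via \eqref{E:conjgell} is $G_\bR\times_V \overline{\tw^q_\ell \fg_-^*}$ and sits inside $G_\bR \times_V \tw^q\fg_+^*$.  I would then show that the $E_0$ differential $\bar\del_0$ is algebraic, induced (up to sign) by the Lie algebra cohomology differential $\e:\tw^q\fg_+^* \to \tw^{q+1}\fg_+^*$ restricted to the relevant $\ttE$-eigenspace.  The argument is a direct conjugation of the Maurer--Cartan computation in the proof of Lemma~\ref{L:circE}: one expresses a local section via the $\fg_+$-valued component of the conjugate Maurer--Cartan form $\bar\w$, observes that the bracket of two sections of the relevant conjugate filtration stays in the complexified filtration by \eqref{E:br} conjugated, and invokes the conjugate of \eqref{E:tw_ell} to kill the derivative terms so that only the bracket term survives.

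Second, to compute the $E_1$ page I would invoke \eqref{E:Hconj}, which gives $H^q(\fg_+,\bC) = \overline{H^q(\fg_-,\bC)}$ as $G_0$-modules, together with the fact (discussed after \eqref{E:Hconj}) that conjugation flips $\ttE$-eigenvalues.  Hence the cohomology of $\bar\del_0$ in $\ttE$-weight $-\ell$ on the $\fg_+$ side is identified with $\overline{H^{\ell-m}_\ell}$, producing ${}^\star E^{\ell,-m}_1 = \cC^\infty(\overline{\sH^{\ell-m}_\ell})$.  For the second half of the lemma I set $m=0$ and use that $H^0_0 = \bC$ makes $\sH^0_0$ trivial, so by \eqref{E:dfn_cC} we have $\cC^{0,q} = \cC^\infty(\sH^0_0\ot\overline{\sH^q_q}) = \cC^\infty(\overline{\sH^q_q}) = {}^\star E^{q,0}_1$.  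The identification $\bar\del_1 = \bar\d$ is then a direct definition-chase from \eqref{E:d}, mirroring the final step $\del_1 = \d$ in the proof of Lemma~\ref{L:circE}, and the assertion ${}^\star E^{q,0}_2 = H^q(\cC^{0,\sb},\bar\d)$ is immediate.

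The principal obstacle is purely bookkeeping: tracking how conjugation acts on $\ttE$-eigenvalues and on dual spaces carefully enough that the weight subscripts in $\overline{\sH^{\ell-m}_\ell}$ emerge correctly.  All substantive analytic content has already been handled in Lemma~\ref{L:circE}, and the representation-theoretic input (the eigenspace structure of $H^\sb(\fg_\pm,\bC)$) is recorded in Section~\ref{S:lac}.
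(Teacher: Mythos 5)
Your proposal is correct and takes the same route as the paper's proof, which simply notes that the argument is entirely analogous to that of Lemma~\ref{L:circE} with the Lie algebra cohomology differential $\e : \tw^q \fg_+^* \to \tw^{q+1} \fg_+^*$ replacing the one on $\fg_-$, and leaves the details to the reader. You have filled in exactly those details via conjugation, correctly tracking that $\overline{\fg_{-\ell}^*}=\fg_\ell^*$ flips $\ttE$-eigenvalues from $\ell$ to $-\ell$ and hence that the cohomology of $\bar\del_0$ in degree $\ell-m$ and eigenvalue $-\ell$ is $\overline{H^{\ell-m}_\ell}$; the only small slip is terminological (one should speak of the $\fg_+$-valued component $\w_+$ of the real Maurer--Cartan form on $G_\bR$, which is the conjugate of $\w_-$, rather than of a ``conjugate Maurer--Cartan form''), and it does not affect the argument.
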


\begin{proof}
The proof is entirely analogous to that of Lemma \ref{L:circE}: again, the vertical differential $\bar\del_0$ is algebraic, and given (up to a sign) by the Lie algebra cohomology differential $\e : \tw^p \fg_+^* \to \tw^{p+1} \fg_+^*$.   Details are left to the reader.
\end{proof}

The identification of ${}^\star{}E^{\ell,-m}_1$ with $\cC^\infty(\overline{\sH^{\ell-m}_\ell})$ implies that the page ${}^\star{}E_1$ is also of the form depicted in Figure \ref{f:circE1}.  Whence we obtain the following analog of Corollary \ref{C:circE}.

\begin{corollary} \label{C:starE}
{\bf (a)}
There exist $G_\bR$--module identifications 
$$
  H^q(D,\cO) \ = \ H^q(\cC^{0,\sb},\bar\d) \quad\hbox{for all } \ q < \nu \,.
$$
{\bf (b)}
There exists a $G_\bR$--module inclusion 
$$
  H^{\nu}(D,\cO) \ \inj \ H^{\nu}(\cC^{0,\sb},\bar\d) \,.
$$
The image is $\bigcap_{i=2}^\infty \tker\,\bar\del_i$, where 
\begin{eqnarray*}
  \bar\del_2 : H^{\nu}(\cC^{0,\sb} , \bar\d) & \to & 
  \tker\{ \bar\del_1 : \cC^\infty(\overline{\sH^{\nu+1}_{\nu+2}}) 
          \to \cC^\infty( \overline{\sH^{\nu+2}_{\nu+3}}) \}\\
  \bar\del_{i+1} : \tker\,\bar\del_i & \to & 
  \tker\{ \bar\del_1 : \cC^\infty(\overline{\sH^{\nu+1}_{\nu+i}}) 
          \to \cC^\infty( \overline{\sH^{\nu+2}_{\nu+i+1}}) \} \,,\quad i \ge 2 \,,
\end{eqnarray*}
and each $\bar\del_i$ is a $G_\bR$--equivariant map.
\end{corollary}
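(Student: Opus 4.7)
The plan is to mirror the proof of Corollary \ref{C:circE} verbatim, substituting Lemma \ref{L:starE} for Lemma \ref{L:circE}. Lemma \ref{L:starE} identifies the $E_1$ page as ${}^\star{}E^{\ell,-m}_1 \simeq \cC^\infty(\overline{\sH^{\ell-m}_\ell})$, and identifies the bottom-row differential $({}^\star{}E^{\sb,0}_1, \bar\del_1)$ with $(\cC^{0,\sb}, \bar\d)$. Since $(\cA^{0,\sb}, \bar\del)$ is the Dolbeault resolution of $\cO$, the spectral sequence abuts to $H^\sb(D,\cO)$, so it suffices to read off the terms ${}^\star{}E^{q,0}_\infty$ for $q \le \nu$.

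The key observation is that the definition \eqref{E:dfn_lo} of $\nu$ forces $H^{\ell-m}_\ell = 0$ (hence also the conjugate $\overline{H^{\ell-m}_\ell}$) whenever $m > 0$ and $\ell - m \le \nu$. Consequently ${}^\star{}E_1$ exhibits exactly the vanishing pattern of Figure \ref{f:circE1}. For $q < \nu$, every differential $\bar\del_i$ ($i\ge 2$) at position $(q,0)$ either originates from or terminates in a zero entry, so the spectral sequence degenerates there at $E_2$ and
\[
  H^q(D,\cO) \ = \ {}^\star{}E^{q,0}_\infty \ = \ {}^\star{}E^{q,0}_2 \ = \ H^q(\cC^{0,\sb}, \bar\d),
\]
proving (a). For $q = \nu$, only the outgoing differentials $\bar\del_i : {}^\star{}E^{\nu,0}_i \to {}^\star{}E^{\nu+i, 1-i}_i$ can survive (the incoming ones still arrive from zero entries), since the vanishing constraint fails in bidegrees $(\nu+i,1-i)$ for $i\ge 2$. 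Under Lemma \ref{L:starE} the target is $\cC^\infty(\overline{\sH^{\nu+1}_{\nu+i}})$, so
\[
  H^\nu(D,\cO) \ = \ {}^\star{}E^{\nu,0}_\infty \ = \ \bigcap_{i=2}^\infty \tker \bar\del_i \ \subset \ {}^\star{}E^{\nu,0}_2 \ = \ H^\nu(\cC^{0,\sb},\bar\d),
\]
yielding (b).

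The $G_\bR$-equivariance in (c) is automatic on $U = D$: the filtration \eqref{E:filt0q} is defined as the conjugate of \eqref{E:filtp0} and so is $G_\bR$-stable, the operator $\bar\del$ is $G_\bR$-invariant, and the identification in Lemma \ref{L:starE} is one of $G_\bR$-modules. Hence the entire ${}^\star{}E$ spectral sequence is one of $G_\bR$-modules and each $\bar\del_i$ is a $G_\bR$-module map. The only step requiring any genuine care is the indexing bookkeeping needed to match the $E_1$-bidegree $(\nu+i,1-i)$ of the differential $\bar\del_i$ with the sheaf $\overline{\sH^{\nu+1}_{\nu+i}}$ appearing in the statement; this I expect to be the main (but purely mechanical) obstacle.
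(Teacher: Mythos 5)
Your proposal is correct and follows exactly the paper's own route: the paper states, immediately after Lemma \ref{L:starE}, that the identification ${}^\star{}E^{\ell,-m}_1 \simeq \cC^\infty(\overline{\sH^{\ell-m}_\ell})$ puts the page ${}^\star{}E_1$ in the vanishing pattern of Figure \ref{f:circE1}, and then deduces Corollary \ref{C:starE} as the verbatim analog of Corollary \ref{C:circE}. Your careful analysis of which differentials can be nonzero at $(q,0)$ for $q<\nu$ versus $q=\nu$, and of the $G_\bR$-equivariance, is precisely the content of that argument.
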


\noindent Note that Corollary \ref{C:starE} yields Theorem \ref{T:E1} in the case that $p=0$.

\subsection{Proof of Theorem \ref{T:E2}} \label{S:prfE2}

Let $\Omega^p = \cO(\tw^{p,0}_D)$ denote the holomorphic $(p,0)$--forms, and note that the complex $(\Omega^\sb,\del)$ is a resolution of $\bC$.  The filtration \eqref{E:filtp0} induces a filtration $F^\sb( \Omega^p)$, and \eqref{E:br} implies $\del$ preserves the filtration $\del\,F^\ell(\Omega^\sb) \subset F^\ell( \Omega^\sb)$.  Thus we obtain a spectral sequence abutting to the sheaf cohomology $H^\sb(D,\bC)$.  Arguments identical to those establishing Lemma \ref{L:circE} and Corollary \ref{C:circE} yield the theorem.

\subsection{Proof of Theorem \ref{T:E1}} \label{S:prfE1}

Recall the definitions \eqref{E:dfn_pq} and \eqref{E:dfn_sH}.  Let $\cA^{0,q}(\sH^p_p)$ denote the smooth sections of $\sH^p_p\ot\tw^{0,q}$, and note that the complex $(\cA^{0,\sb}(\sH^p_p),\bar\del)$ is the Dolbeault resolution of the holomorphic sections $\cO(\sH^p_p)$.  Recall the filtration \eqref{E:filt0q}, and define $F^\ell(\sH^p_p\ot\tw^{0,q}_D) \dfn \sH^p_p\ot F^\ell(\tw^{0,q}_D)$.  Let $F^\ell(\cA^{0,q}(\sH^p_p))$ denote the corresponding filtration of the smooth sections.  By \eqref{E:d''} the differential $\bar\del$ preserves the filtration $F^\ell(\cA^{0,\sb}(\sH^p_p))$.  Whence we obtain a spectral sequence $\{ \bar\del_i : {}^\star{}E^{\ell,-m}_i \to {}^\star{}E^{\ell+i,1-m-i}_i \}$ abutting to sheaf cohomology $H^\sb(D,\sH^p_p)$.  Keeping in mind that $\cC^{p,q}$ is the space of smooth sections of $\sH^p_p\ot \overline{\sH^q_q}$, \cf \eqref{E:dfn_cC}, an argument identical to that establishing Lemma \ref{L:starE} and Corollary \ref{C:starE} yields the theorem.  Details are left to the reader.

\begin{remark} \label{R:BGGres1}
It is sometimes the case that a simple argument with the spectral sequence $\{ {}^\star{}E_i,\bar\del_i\}$ yields a significant strengthening of Theorem \ref{T:E1}: under suitable conditions on the set $\{ (p,\ell) \ | \ H^p_\ell \not=0 \}$ there exist differential operators $\nabla : \cC^\infty(\sH^p_p \ot \overline{\sH^q})  \to \cC^\infty(\sH^p_p \ot \overline{\sH^{q+1}})$ with the properties that 
$$
  \nabla \ = \ \bar\d \quad\hbox{for } \ q < \nu \,,
$$
and
\begin{equation} \label{E:BGGres}
\renewcommand{\arraystretch}{1.5}
\begin{array}{rcl}
  0 \ \to \ \cO(\sH^p_p) & \inj & 
  \cC^\infty( \sH^p_p \ot\overline{\sH^0} ) \ \stackrel{\nabla}{\longrightarrow} \ 
  \cC^\infty( \sH^p_p \ot\overline{\sH^1} )\ \stackrel{\nabla}{\longrightarrow} \\
  & & \hbox{\hspace{56pt}}
  \cdots \stackrel{\nabla}{\longrightarrow} \ 
  \cC^\infty( \sH^p_p \ot\overline{\sH^d} ) \ \longrightarrow \ 0
\end{array}
\end{equation}
is a resolution of the sheaf $\cO(\sH^p_p)$ of holomorphic sections of $\sH^p_p$.  (Note that the definition \eqref{E:dfn_lo} implies $\sH^q = \sH^q_q$ for all $q < \nu$.)  For example, the resolution \eqref{E:BGGres} exists when $\cT_1$ is a contact distribution (equivalently, $\check D$ is an adjoint variety).  This and other examples are discussed in Appendix \ref{S:egs}.

It is interesting to compare the resolution \eqref{E:BGGres} with the Dolbeault resolution $(\cA^{0,\sb}(\sH^p_p),\bar\del)$.  Both resolutions have the same length.  The advantage of \eqref{E:BGGres} is that the vector bundles involved have smaller rank; that is, $\trank\,\overline{\sH^q} \le \trank\,\tw^{0,q}_D$, and this inequality is strict if and only if the containment $T_1 \subset TD$ is strict.  However, the price we pay for this reduction is that the operators $\nabla$ will generally not be of first-order.

The resolution \eqref{E:BGGres} may be viewed as a Dolbeault analog of the (generalized) Bernstein-Gelfand-Gelfand resolution of $\bC$ by differential operators on $\check D$, \cf \cite{MR1038279, MR0578996, MR0476813, MR586721}.\end{remark}

\subsection{Proof of Theorem \ref{T:hh}} \label{S:prfhh}

As we will see, the theorem follows from Corollary \ref{C:PL2} and Theorem \ref{T:E1} via standard spectral sequence arguments.

\subsubsection*{A spectral sequence for the characteristic cohomology}

Associated to the double complex $(\cC,\d,\bar\d)$ are standard filtrations of $\cC^\sb$, one of which is 
$$
  F^p\cC^{p+q} \ \dfn \ \bigoplus_{i\ge0} \cC^{p+i,q-i} \,.
$$
It is straightforward to confirm that $\bd$ preserves $F^p\cC^\sb$.  Whence the filtration induces a spectral sequence $\{ \bd_i : E^{p,q}_i \to E^{p+i,q+1-i}_i \}$ abutting to the characteristic cohomology
$$
  E_i \ \Longrightarrow \ H^\sb(\cC,\bd) \,=\, H^\sb_\cI(D) \,.
$$
As is well known 
\begin{eqnarray}
  \nonumber
  E^{p,q}_0 & = & \cC^{p,q} \quad \hbox{with }  \bd_0 \,=\, \bar\d\,,\\
  \label{E:ssIE}
  E^{p,q}_1 & = & H^q(\cC^{p,\sb} , \bar\d) 
  \quad \hbox{with }  \bd_1 \,=\, \d \,,\\
  \nonumber
  E^{p,q}_2 & = &  H^p( H^q(\cC^{\sb,\sb} , \bar\d) \,,\, \d )  \,.
\end{eqnarray}
From \eqref{E:ssIE} and Theorem \ref{T:E1} we see that 
\begin{equation} \label{E:E1<ro}
   E^{p,q}_1 \ = \ H^q(D,\sH^p_p) \quad \hbox{for all } \ q < \nu \,.
\end{equation}
Visually, up to the $q=\nu-1$ level, the $E_1$--page is given by sheaf cohomology, \cf Figure \ref{f:E1}.
\begin{small}
\begin{figure}[h] 
\caption[The $E_1$--page]{The $E_1$--page.}
\[ \renewcommand{\arraystretch}{1.5}
\begin{array}{ccccccc}
  H^s(\cC^{0,\sb},\bar\d) & \stackrel{\d}{\to} &
  H^s(\cC^{1,\sb},\bar\d) & \stackrel{\d}{\to} & 
  \cdots & \stackrel{\d}{\to} 
  & H^s(\cC^{s,\sb},\bar\d) \\
  \vdots & & \vdots & & & & \vdots \\
  H^{\nu}(\cC^{0,\sb},\bar\d) & \stackrel{\d}{\to} &
  H^{\nu}(\cC^{1,\sb},\bar\d) & \stackrel{\d}{\to} & 
  \cdots & \stackrel{\d}{\to} 
  & H^{\nu}(\cC^{s,\sb},\bar\d)  \\
  H^{\nu-1}(D,\sH^0_0) & \stackrel{\d}{\to} &
  H^{\nu-1}(D,\sH^1_1) & \stackrel{\d}{\to} & 
  \cdots & \stackrel{\d}{\to} 
  & H^{\nu-1}(D,\sH^s_s) \\
  \vdots & & \vdots & & & & \vdots \\
  H^{1}(D,\sH^0_0) & \stackrel{\d}{\to} &
  H^{1}(D,\sH^1_1) & \stackrel{\d}{\to} & 
  \cdots & \stackrel{\d}{\to} 
  & H^{1}(D,\sH^s_s) \\
  H^0(D,\sH^0_0) & \stackrel{\d}{\to} &
  H^0(D,\sH^1_1) & \stackrel{\d}{\to} & 
  \cdots & \stackrel{\d}{\to} 
  & H^0(D,\sH^s_s) \\
\end{array}
\]
\label{f:E1}
\end{figure}
\end{small}
Keeping \eqref{E:ssIE} in mind and consulting Figure \ref{f:E1}, we see that
\begin{equation} \label{E:E2}
  E^{p,q}_2 \ = \ H^p( H^q(D,\sH^\ast_\ast) , \d ) 
  \quad \hbox{for all } \ q < \nu \,.
\end{equation}

\subsubsection*{Two spectral sequences for the hypercohomology}

Let $\cH^\sb$ denote the cohomology sheaves of \eqref{E:cpxll}.  The two spectral sequences $\{ d^\dagger_i : {}^\dagger E^{p,q}_i \to {}^\dagger E^{p+i,q-i+1}_i \}$ and $\{ d^\ddagger_i : {}^\ddagger E^{p,q}_i \to {}^\ddagger E^{p-i+1,q+i}_i \}$  associated with the hypercohomology satisfy
\begin{equation}\label{E:dE2}
  {}^\dagger E^{p,q}_2 \ = \ H^p( H^q(D,\sH^\ast_\ast)\,,\, \d ) \tand
  {}^\ddagger E^{p,q}_2 \ = \ H^q( D , \cH^p ) \,.
\end{equation}

\subsubsection*{Proof of Theorem \ref{T:hh}(c)}

Equations \eqref{E:E2} and \eqref{E:dE2} yield
\begin{subequations} \label{SE:dEvE}
\begin{equation}
  {}^\dagger{}E_2^{p,q} \ = \ E_2^{p,q} \quad\hbox{for all } q < \nu \,.
\end{equation}
Moreover, \eqref{E:dE2}, Theorem \ref{T:E1} and \eqref{E:ssIE} yield  
\begin{eqnarray}
  \nonumber
  {}^\dagger{}E_2^{0,\nu} \, = \, H^0( H^{\nu}(D,\sH^\ast_\ast) , \d ) & = &  
  \tker\{ \d : H^{\nu}(D,\sH^0_0) \to H^{\nu}(D,\sH^1_1) \} \\
  & \subset & 
  \tker\{ \d : H^{\nu}(\cC^{0,\sb},\bar\d) 
  \to H^{\nu}(\cC^{1,\sb},\bar\d) \} \\
  \nonumber
  & = & H^0( H^{\nu}(\cC^{\sb,\sb} , \bar\d) \,,\, \d ) \ = \ E_2^{0,\nu}\,.
\end{eqnarray}
\end{subequations}
Visually, the inclusions ${}^\dagger{}E_2^{p,q} \subseteq E_2^{p,q}$ of \eqref{SE:dEvE} are depicted in Figure \ref{f:dEvE}.
\begin{small}
\begin{figure}[!h] 
\caption{The inclusions ${}^\dagger{}E_2^{p,q} \subseteq E_2^{p,q}$.}
$$ \renewcommand{\arraystretch}{1.5}
\begin{array}{cccc}
{}^\dagger{}E_2^{0,\nu} \subset E_2^{0,\nu} & \ast & \ast \\
{}^\dagger{}E_2^{0,\nu-1} = E_2^{0,\nu-1} 
  & {}^\dagger{}E_2^{1,\nu-1} = E_2^{1,\nu-1} 
  & {}^\dagger{}E_2^{2,\nu-1} = E_2^{2,\nu-1} & \cdots
\\
  \vdots & \vdots & \vdots & \\
{}^\dagger{}E_2^{0,1} = E_2^{0,1} & {}^\dagger{}E_2^{1,1} = E_2^{1,1} 
  & {}^\dagger{}E_2^{2,1} = E_2^{2,1} & \cdots
\\
{}^\dagger{}E_2^{0,0} = E_2^{0,0} & {}^\dagger{}E_2^{1,0} = E_2^{1,0} 
  & {}^\dagger{}E_2^{2,0} = E_2^{2,0} & \cdots
\end{array}
$$
\label{f:dEvE}
\end{figure}
\end{small}
(The asterisk denotes no inclusion relation.)  From this we see that ${}^\dagger{}E_\infty^{p,q} = E_\infty^{p,q}$ for all $q < \nu$ and ${}^\dagger{}E_\infty^{0,\nu} \subset E_\infty^{0,\nu}$.  This yields Theorem \ref{T:hh}(c).

\begin{remark} \label{R:hh}
From \eqref{SE:dEvE}, we see that the image of the inclusion ${}^\dagger\tGr^0\bH^{\nu}(D,\sH^\ast_\ast) \inj \tGr^0{}H^{\nu}_\cI(D)$ in Theorem \ref{T:hh}(c) may be described as follows.  First note that the inclusion of ${}^\dagger{}E_1^{0,\nu} = H^{\nu}(D,\sH^0_0)$ into $E_1^{0,\nu} = H^{\nu}(\cC^{0,\sb},\bar\d)$ is given by Theorem \ref{T:E1}(b).  Second, 
$$
  \tGr^0{}H^{\nu}_\cI(D) \ = \ E^{0,\nu}_\infty \ = \ 
  \bigcap_{i=1}^{\nu} \tker\, \bd_i\,,
$$
where $\bd_1$ is defined on $E_1^{0,\nu}$, and each successive operator $\bd_{i+1}$ is defined on the kernel of the previous.  Third,
$$
  {}^\dagger\tGr^0\bH^{\nu}(D,\sH^\ast_\ast) \ = \ {}^\dagger{}E^{0,\nu}_\infty
  \ = \ H^{\nu}(D,\sH^0_0) \,\cap\, E^{0,\nu}_\infty
  \ = \ H^{\nu}(D,\sH^0_0) \,\cap\, \tGr^0{}H^{\nu}_\cI(D) \,.
$$
\end{remark}

\subsubsection*{Proof of Theorem \ref{T:hh}(a)}

Turning to the second spectral sequence ${}^\ddagger{}E$, the Poincar\'e lemma of Corollary \ref{C:PL2} implies $\cH^0 = \bC$ and $\cH^p = 0$ for all $0 < p < \nu$.  Therefore,
$$
  {}^\ddagger E^{p,q}_2 \ = \ 
  \left\{
  \begin{array}{ll}
    H^q(D,\bC) \,, \ & p=0\,,\\
    0 \,, \ & 0 < p < \nu \,,
  \end{array}
  \right.
$$
\cf Figure \ref{f:starE}.  
\begin{small}
\begin{figure}[!h] 
\caption[The ${}^\ddagger E_2$--page]{The ${}^\ddagger E_2$--page of the hypercohomology spectral sequence.}
$$ \renewcommand{\arraystretch}{1.5}
\begin{array}{ccccccc}
\vdots & \vdots & & \vdots & \vdots & \vdots & \\
H^2(D,\bC) & 0 & \cdots & 0 & H^2(D,\cH^{\nu}) & H^2(D, \cH^{\nu+1}) & \cdots \\
H^1(D,\bC) & 0 & \cdots & 0 & H^1(D,\cH^{\nu}) & H^1(D, \cH^{\nu+1}) & \cdots \\
H^0(D,\bC) & 0 & \cdots & 0 & H^0(D,\cH^{\nu}) & H^0(D, \cH^{\nu+1}) & \cdots \\
\end{array}
$$
\label{f:starE}
\end{figure}
\end{small}
(When considering Figure \ref{f:starE} it is important to recall that the differential $d^\ddagger_i$ `points' towards the northwest $\nwarrow$, while all other spectral sequence differentials considered in this paper `point' towards the southeast $\searrow$.)  Theorem \ref{T:hh}(a) follows.

\subsubsection*{Proof of Theorem \ref{T:hh}(b)}

Again consulting Figure \ref{f:starE} we see that the terms ${}^\ddagger{}E^{p,q}_\infty$ with $p+q = \nu$ are 
\begin{eqnarray*}
  {}^\ddagger{}E^{0,\nu}_\infty & = &  H^{\nu}(D,\bC) \,,\\
  {}^\ddagger{}E^{p,q}_\infty & = & 0 \quad\hbox{for }\ p,q>0 \,,\\ 
  {}^\ddagger{}E^{\nu,0}_\infty & = & 
  \tker\{ d^\ddagger_{\nu+1} : H^0(D,\cH^{\nu}) \to H^{\nu+1}(D,\bC) \} \,.
\end{eqnarray*}
Whence
$$
  {}^\ddagger\tGr^\sb \bH^{\nu}(D,\sH^\ast_\ast) \ = \ H^{\nu}(D,\bC) 
  \ \op \ \tker\{ d^\ddagger_{\nu+1} : H^0(D,\cH^{\nu}) \to H^{\nu+1}(D,\bC) \}
$$
and
$$
  H^{\nu}(D,\bC) \ \subset \ \bH^{\nu}(D,\sH^\ast_\ast) \,.
$$
Assertion (b) of Theorem \ref{T:hh} follows.

\appendix

\section{Examples} \label{S:egs}

We have seen that the eigenspace decomposition \eqref{E:Hev} plays a central role in the characteristic cohomology.  Here we present a number of examples illustrating the decomposition and the values $\mu$ and $\nu$ of \eqref{E:dfn_s} and \eqref{E:dfn_lo}, respectively.  The eigenspace decomposition is computed using Kostant's theorem on Lie algebra cohomology which is briefly reviewed in Appendix \ref{S:kostant}.  

This section contains several figures illustrating the decomposition, and I would like to make two comments on the interpretation of those figures.  First,  the decomposition \eqref{E:Hev} of $H^\ell(\fg_-,\bC)$ lies on the $\ell$--th diagonal.  Second, virtue of Lemma \ref{L:circE} and its analogs (such as Lemma \ref{L:starE}), these figures may be identified with those representing the spectral sequence pages ${}^\circ{}E_1$ (Figure \ref{f:circE1}), ${}^\star{}E_1$ and their analogs in Sections \ref{S:prfE2} and \ref{S:prfE1}.

\subsection{Adjoint varieties} \label{S:eg_adj}

Consider the case that $\cT_1 \subset \cT \check D$ is a contact distribution.  This is the case precisely when $G_\bC$ is simple and the minimal homogeneous embedding of $\check D$ is the $G_\bC$--orbit of the highest root line $\fg^{\tilde\a} \in \bP \fg_\bC$.  These are the \emph{adjoint varieties}, the compact, simply connected, homogeneous complex contact manifolds \cite{MR0137126}.  These examples are easily described by the geometry of the contact distribution; it is not necessary to appeal to representation theory.  In this case, the splitting \eqref{SE:split*} is 
$$
  \cT D^{\,*} \ = \ \tw^{1,0}_1 \,\op\, \tw^{1,0}_2 \,,
  \quad\hbox{with } \ 
  \tdim_\bC \tw^{1,0}_1 \,=\, 2c \ \hbox{ and } \ \tdim_\bC \tw^{1,0}_2 \,=\, 1 \,.
$$
Note that $\tw^{1,0}_2 = \tAnn(\cT_1)$.

Figures \ref{f:E0adj} and \ref{f:E1adj} depict the pages ${}^\star{}E_0$ and ${}^\star{}E_1$ of the spectral sequence introduced in Section \ref{S:filt} (and generalized in Section \ref{S:prfE1}).  
\begin{small}
\begin{figure}[h] 
\caption[The ${}^\star E_0^{\ell,-m}$--term for adjoint varieties]{The initial term  ${}^\star E_0^{\ell,-m} = \cA_\ell^{0,\ell-m}$ in the case that $\check D$ is an adjoint variety.}
$$
\renewcommand{\arraystretch}{1.3}
\begin{array}{cccccccc}
 \cA^{0,0} & \cA^{0,1}_1 & \cA^{0,2}_2 & \cA^{0,3}_3 & \cdots & \cA^{0,2c}_{2c} 
    & 0 & 0 \\
  & & \uparrow\hbox{\scriptsize{$\bar\del_0$}} 
    & \uparrow\hbox{\scriptsize{$\bar\del_0$}} &
    & \uparrow\hbox{\scriptsize{$\bar\del_0$}} &  \\ 
 0  & 0 & \cA^{0,1}_2 & \cA^{0,2}_3 & \cdots & \cA^{0,2c-1}_{2c} 
    & \cA^{0,2c}_{2c+1} & \cA^{0,2c+1}_{2c+2} \\
\end{array}
$$
\label{f:E0adj}
\end{figure}
\end{small}
When considering Figure \ref{f:E0adj}, recall that $\cA^{0,\ell}_\ell$ denotes the smooth sections of $\tw^{0,\ell}_\ell$, and $\cA^{0,\ell}_{\ell+1}$ denotes the smooth sections of $\tw^{0,\ell-1}_\ell \ot \tw^{0,1}_2$, \cf \eqref{E:twgrad}.  The nondegeneracy of the contact form implies that the algebraic differential $\bar\del_0 : \cA^{0,\ell}_{\ell+1} \to \cA^{0,\ell+1}_{\ell+1}$ is injective when $\ell \le c+1$ and surjective when $\ell \ge c+1$.  It follows from Lemma \ref{L:starE} that the ${}^\star{}E_1$--term of the spectral sequence is as depicted in Figure \ref{f:E1adj}.  Referring to the definitions \eqref{E:dfn_s} and \eqref{E:dfn_lo} we see that
\[
  \mu \,=\, \nu \,=\, c \,.
\]
\begin{small}
\begin{figure}[h] 
\caption[The term ${}^\star E_1^{\ell,-m}$ for adjoint varieties]{The term  ${}^\star E_1^{\ell,-m} = \cC^\infty(\overline{\sH^{\ell-m}_m})$ in the case that $\check D$ is an adjoint variety.}
$$
\renewcommand{\arraystretch}{1.5}
\begin{array}{ccccccc}
 \cC^\infty(\overline{\sH^0_0}) & \cdots 
     & \cC^\infty(\overline{\sH^{c}_{c}}) & 0  \\
 & & & 0 & \cC^\infty(\overline{\sH^{c+1}_{c+2}}) & \cdots  
    & \cC^\infty(\overline{\sH^{2c+1}_{2c+2}}) \\
\end{array}
$$
\label{f:E1adj}
\end{figure}
\end{small}

This is an example in which the resolution \eqref{E:BGGres} of Remark \ref{R:BGGres1} exists.  Indeed from Figure \ref{f:E1adj} we see that there exists a complex
\begin{equation} \nonumber 
  0 \ \to \ \cO \ \inj \ 
  \cC^\infty(\overline{\sH^0} ) \ \stackrel{\nabla^1}{\to} \cdots \stackrel{\nabla^1}{\to} 
  \cC^\infty( \overline{\sH^{c}} )\ \stackrel{\nabla^2}{\to} 
  \cC^\infty(\overline{\sH^{c+1}} ) \ \stackrel{\nabla^1}{\to} \cdots \stackrel{\nabla^1}{\to} 
  \cC^\infty(\overline{\sH^{2c+1}} ) \ \to \ 0 \,,
\end{equation}
where $\nabla^a$ denotes an operator of order $a$.  (This is the case $p=0$ in \eqref{E:BGGres}.)  To see that the complex is exact, if suffices to recall that the spectral sequence $\{ {}^\star E^{p,q}_i , \bar\del_i\}$ converges to the Dolbeault cohomology.  This resolution may be thought of as a Dolbeault analog of the Rumin complex \cite{BEGN, MR1046521}.  A similar argument gives \eqref{E:BGGres} for $p>0$.

\subsection{\bmath{Flag varieties $\check D = \tFlag(a,b,\bC^5)$}} \label{S:eg_A4}

If the compact dual is a Grassmannian, the IPR $\cT_1 = \cT{\check D}$ is trivial.  So we will consider a examples of the form $\check D = \tFlag(a,b,\bC^5)$.  (The case that $(a,b) = (1,4)$ is omitted as the compact dual $\check D$ is an adjoint variety; see Section \ref{S:eg_adj}.)  For these varieties
$$
  \ttE \ = \ \ttS^a \,+\, \ttS^b \,.
$$  
The nontrivial $\ttE$--eigenspaces $H^\ell_m$ for these two compact duals are computed by \eqref{E:Klo}; see Figures \ref{f:A4P12}--\ref{f:A4P23}.  The values of $\mu$ and $\nu$, determined by inspection of the figures, are listed in Table \ref{t:A4}.
\begin{table}[h] 
\caption{$(\nu,\mu)$ values for $\tFlag(a,b,\bC^5)$}\renewcommand{\arraystretch}{1.3}
\begin{tabular}{c|ccc}
  $\check D$ & $\tFlag(1,2,\bC^5)$ & $\tFlag(1,3,\bC^5)$ & $\tFlag(2,3,\bC^5)$ \\
  \hline
  $(\nu,\mu)$ & $(1,3)$ & $(2,4)$ & $(1,2)$
\end{tabular}
\label{t:A4}
\end{table}
\begin{small}
\begin{figure}[h] 
\caption{Nontrivial $H^\ell_m$ for $\check D = \tFlag(1,2,\bC^5)$}
$$
\renewcommand{\arraystretch}{1.3}
\begin{array}{ccccccccccc}
  H^0_0 & H^1_1 & H^2_2 & H^3_3 & 0 & 0 & 0 & 0 & 0 & 0 & 0  \\
  0 & 0 & 0 & H^2_3 & H^3_4 & H^4_5 & 0 & 0 & 0 & 0 & 0  \\
  0 & 0 & 0 & 0 & 0 & H^3_5 & H^4_6 & H^5_7 & 0 & 0 & 0  \\
  0 & 0 & 0 & 0 & 0 & 0 & 0 & H^4_7 & H^5_8 & H^6_9 & H^7_{10}  \\
\end{array}
$$
\label{f:A4P12}
\end{figure}
\end{small}
\begin{small}
\begin{figure}[h] 
\caption{Nontrivial $H^\ell_m$ for $\check D = \tFlag(1,3,\bC^5)$}
$$
\renewcommand{\arraystretch}{1.3}
\begin{array}{ccccccccccc}
  H^0_0 & H^1_1 & H^2_2 & H^3_3 & H^4_4 & 0 & 0 & 0 & 0 & 0 & 0 \\
  0 & 0 & 0 & 0 & H^3_4 & H^4_5 & H^5_6 & 0 & 0 & 0 & 0 \\
  0 & 0 & 0 & 0 & 0 & 0 & H^4_6 & H^5_7 & H^6_8 & H^7_9 & H^8_{10} \\
\end{array}
$$
\label{f:A4P13}
\end{figure}
\end{small}
\begin{small}
\begin{figure}[h] 
\caption{Nontrivial $H^\ell_m$ for $\check D = \tFlag(2,3,\bC^5)$}
$$
\renewcommand{\arraystretch}{1.3}
\begin{array}{ccccccccccccc}
  H^0_0 & H^1_1 & H^2_2 & 0 & 0 & 0 & 0 & 0 & 0 & 0 & 0 & 0 & 0 \\
  0 & 0 & 0 & H^2_3 & H^3_4 & 0 & 0 & 0 & 0 & 0 & 0 & 0 & 0 \\
  0 & 0 & 0 & 0 & 0 & H^3_5 & H^4_6 & H^5_7 & 0 & 0 & 0 & 0 & 0 \\
  0 & 0 & 0 & 0 & 0 & 0 & 0 & 0 & H^5_8 & H^6_9 & 0 & 0 & 0 \\
  0 & 0 & 0 & 0 & 0 & 0 & 0 & 0 & 0 & 0 & H^6_{10} & H^7_{11} & H^8_{12} \\
\end{array}
$$
\label{f:A4P23}
\end{figure}
\end{small}
%

\subsection{\bmath{The exceptional group $G_2$}} \label{S:eg_G2}

The compact dual $\check D = G_2(\bC)/P_2$ is an adjoint variety (Section \ref{S:eg_adj}), so here we will consider only the compact duals $\cQ^5 = G_2/P_1$, which has grading element $\ttE = \ttS^1$; and $G_2/P_{1,2} = G_2/B$, which has grading element $\ttE = \ttS^1 +\ttS^2$.  The nontrivial $\ttE$--eigenspaces $H^\ell_m$ for these two compact duals are computed by \eqref{E:Klo}, and are depicted in Figures \ref{f:G2P1} and \ref{f:G2P12}.  From these figures we see that
$$
  \nu \ = \ 1
$$
in both examples.

\begin{small}
\begin{figure}[h] 
\caption{Nontrivial $H^\ell_m$ for $\check D = G_2/P_1$}
$$
\renewcommand{\arraystretch}{1.3}
\begin{array}{ccccccccccc}
  H^0_0 & H^1_1 & 0 & 0 & 0 & 0 & 0 & 0 & 0 & 0 & 0  \\
  0 & 0 & 0 & 0 & 0 & 0 & 0 & 0 & 0 & 0 & 0 \\
  0 & 0 & 0 & 0 & H^2_4 & 0 & 0 & 0 & 0 & 0 & 0 \\
  0 & 0 & 0 & 0 & 0 & 0 & H^3_6 & 0 & 0 & 0 & 0 \\
  0 & 0 & 0 & 0 & 0 & 0 & 0 & 0 & 0 & 0 & 0 \\
  0 & 0 & 0 & 0 & 0 & 0 & 0 & 0 & 0 & H^4_9 & H^5_{10}
\end{array}
$$
\label{f:G2P1}
\end{figure}
\end{small}

Consider the case that $\check D = G_2/P_1$.  From Figure \ref{f:G2P1} we see that the resolution \eqref{E:BGGres} exists.  In the case that $p=0$ the resolution is of the form
\begin{eqnarray*}
  0 \ \to \ \cO \ \inj \ 
  \cC^\infty( \overline{\sH^0} ) & \stackrel{\nabla^1}{\to} & 
  \cC^\infty( \overline{\sH^1} ) \ \stackrel{\nabla^3}{\to} \
  \cC^\infty( \overline{\sH^2} ) \ \stackrel{\nabla^2}{\to} \ 
  \cC^\infty( \overline{\sH^3} ) \\
  & \stackrel{\nabla^3}{\to} &
  \cC^\infty( \overline{\sH^4} ) \ \stackrel{\nabla^1}{\to} \ 
  \cC^\infty( \overline{\sH^5} ) \ \to \ 0 
\end{eqnarray*}
with $\nabla^{a}$ a $G_\bR$--invariant differential operator of order $a$.  (See \cite[Section 5]{BEGN} for a discussion of this resolution in a related setting.)
\begin{small}
\begin{figure}[h] 
\caption{Nontrivial $H^\ell_m$ for $\check D = G_2/P_{1,2}$}
$$
\renewcommand{\arraystretch}{1.3}
\begin{array}{ccccccccccccccccc}
  H^0_0 & H^1_1 & 0 & 0 & 0 & 0 & 0 & 0 & 0 & 0 & 0 & 0 & 0 & 0 & 0 & 0 & 0 \\
  0 & 0 & 0 & H^2_3 & 0 & 0 & 0 & 0 & 0 & 0 & 0 & 0 & 0 & 0 & 0 & 0 & 0 \\
  0 & 0 & 0 & 0 & 0 & 0 & 0 & 0 & 0 & 0 & 0 & 0 & 0 & 0 & 0 & 0 & 0 \\
  0 & 0 & 0 & 0 & 0 & H^2_5 & 0 & 0 & 0 & 0 & 0 & 0 & 0 & 0 & 0 & 0 & 0 \\
  0 & 0 & 0 & 0 & 0 & 0 & 0 & 0 & 0 & 0 & 0 & 0 & 0 & 0 & 0 & 0 & 0 \\
  0 & 0 & 0 & 0 & 0 & 0 & 0 & 0 & H^3_8 & 0 & 0 & 0 & 0 & 0 & 0 & 0 & 0 \\
  0 & 0 & 0 & 0 & 0 & 0 & 0 & 0 & 0 & 0 & 0 & 0 & 0 & 0 & 0 & 0 & 0 \\
  0 & 0 & 0 & 0 & 0 & 0 & 0 & 0 & 0 & 0 & 0 & H^4_{11} & 0 & 0 & 0 & 0 & 0 \\
  0 & 0 & 0 & 0 & 0 & 0 & 0 & 0 & 0 & 0 & 0 & 0 & 0 & 0 & 0 & 0 & 0 \\
  0 & 0 & 0 & 0 & 0 & 0 & 0 & 0 & 0 & 0 & 0 & 0 & 0 & H^4_{13} & 0 & 0 & 0 \\
  0 & 0 & 0 & 0 & 0 & 0 & 0 & 0 & 0 & 0 & 0 & 0 & 0 & 0 & 0 & H^5_{15} & H^6_{16} \\
\end{array}
$$
\label{f:G2P12}
\end{figure}
\end{small}

Consider the case that $\check D = G_2/B$.  From Figure \ref{f:G2P12} we see that this is also an example in which the resolution \eqref{E:BGGres} exists.  In the case that $p=0$ the resolution is of the form
\begin{eqnarray*}
  0 \ \to \ \cO \ \inj \ 
  \cC^\infty( \overline{\sH^0} ) & \stackrel{\nabla^1}{\to} & 
  \cC^\infty( \overline{\sH^1} ) \ \stackrel{\nabla^2}{\to} \
  \cC^\infty( \overline{\sH^2} ) \ \stackrel{\nabla^3}{\to} \ 
  \cC^\infty( \overline{\sH^3} ) \\
  & \stackrel{\nabla^3}{\to} &
  \cC^\infty( \overline{\sH^4} ) \ \stackrel{\nabla^2}{\to} \ 
  \cC^\infty( \overline{\sH^5} ) \ \stackrel{\nabla^1}{\to} \ 
  \cC^\infty( \overline{\sH^6} ) \ \to \ 0 \,.
\end{eqnarray*}

\section{Kostant's Theorem} \label{S:kostant}

This section is a terse summary of Kostant's theorem on Lie algebra cohomology \cite[Theorem 5.14]{MR0142696}.  We restrict the discussion to cohomology with coefficients in the trivial representation $\bC$. (Kostant's theorem addresses the more general setting of coefficients in an arbitrary irreducible $\fg_\bC$--representation.)  The theorem describes the $\fg_0$--module structure of $H^\sb(\fg_-,\bC)$ as follows.

Let $\{ \w_1,\ldots,\w_r\} \subset \fh^*$ denote the \emph{fundamental weights} of $(\fg_\bC,\Sigma)$.  Let $\wtL = \wtL(\fg_\bC) = \tspan_\bZ\{\w_1,\ldots,\w_r\}$ denote the \emph{weight lattice}.  Then a weight $\lambda = n^i\w_i \in \wtL$ is \emph{$\fg_\bC$--dominant} if $n^i \ge0$ for all $i$.  Similarly, a weight is \emph{$\fg_0$--dominant} if $n^i \ge0$ for all $i \not \in I$, \cf \eqref{E:Sigmav}.  Let $\wtD(\fg_\bC) \subset \wtD(\fg_0)$ denote the respective sets of dominant weights.

The Weyl group has the property that $W(\wtL) = \wtL$.   The set $W^\fp$ indexing Schubert varieties (Section \ref{S:schub}) may be characterized by
$$
  W^\fp \ = \ \{ w \in W \ | \ w(\wtD(\fg_\bC)) \,\subset\,\wtD(\fg_0) \} \,,
$$
\cf\cite[\S5.13]{\Kostant1}.  One of the simplest ways to determine the elements of $W^\fp$ is to use the fact that they are in bijective correspondence with the orbit of 
$$
  \rho_0 \ \dfn \ \sum_{i \in I} \w_i
$$
under the Weyl group $W$, via the assignment $w \mapsto w^{-1} \rho_0$.    Let
$$
  W^\fp(\ell) \ = \ \{ w \in W^\fp \ | \ |w| = \ell\}
$$
denote the elements of length $\ell$.

Let 
$$
  \rho \ = \ \sum_i \w_i \ \in \ \wtL \,.
$$
Given $w \in W$ define 
\begin{equation} \label{E:rhow}
  \rho_w \ = \ \rho \,-\, w(\rho) \ \in \ \wtL \,.
\end{equation}
Then $-\rho_w \in \wtD(\fg_0)$; let $H_{w}$ denote the irreducible $\fg_0$--module of \emph{lowest} weight $\rho_w$.  (Equivalently, the dual $H_{w}^*$ is the irreducible $\fg_0$--module of highest weight $-\rho_w$.)  By Kostant's \cite[Theorem 5.14]{MR0142696}, the Lie algebra cohomology
\begin{equation} \label{E:kostant}
  H^\ell( \fg_- , \bC) \ = \ \bigoplus_{w \in W^\fp(\ell)} H_w
\end{equation}
as a $\fg_0$--module.  Moreover, $\rho_w = \rho_v$ if and only if $w = v$; that is, the multiplicity of $H_w$ in $H^\sb(\fg_-,\bC)$ is one.  
Kostant's \eqref{E:kostant} determines the $\ttE$--eigenspace decomposition \eqref{E:Hev} and the integer $\nu$ of \eqref{E:dfn_lo} as follows.  Precisely,
\begin{equation} \label{E:Hw_ev}
  H^\ell_m \ = \ \bigoplus_{\mystack{w \in W^\fp(\ell)}{\rho_w(\ttE) = m}} H_w \,.
\end{equation}
Thus,
\begin{equation} \label{E:Klo}
  \nu \ = \ 
  \tmax\{ \ell \ | \ \rho_w(\ttE) = \ell \,,\ \forall \ w \in W^\fp(\ell) \} \,.
\end{equation}

\def\cprime{$'$} \def\Dbar{\leavevmode\lower.6ex\hbox to 0pt{\hskip-.23ex
  \accent"16\hss}D}

\end{document}